\theoremstyle{plain}
\newtheorem{mtheorem}{Theorem}
\theoremstyle{plain}
\newtheorem{theorem}{Theorem}[section]
\theoremstyle{plain}
\newtheorem{proposition}[theorem]{Proposition}
\theoremstyle{plain}
\newtheorem{lemma}[theorem]{Lemma}
\theoremstyle{plain}
\newtheorem{claim}[theorem]{Claim}
\theoremstyle{plain}
\newtheorem{corollary}[theorem]{Corollary}
\theoremstyle{plain}
\newtheorem{assumption}[theorem]{Assumption}
\theoremstyle{plain}
\newtheorem{problem}[theorem]{Problem}
\theoremstyle{plain}
\newtheorem{conjecture}[theorem]{Conjecture}
\theoremstyle{definition}
\newtheorem{definition}[theorem]{Definition}
\theoremstyle{remark}
\newtheorem{remark}[theorem]{Remark}
\theoremstyle{remark}
\newtheorem{example}[theorem]{Example}
\theoremstyle{remark}
\title{Superintrinsic synthesis in fixed point properties}
\author{Masato Mimura}
\address{Mathematical Institute, Tohoku University, 6-3, Aramaki Aza-Aoba, Aoba-ku, Sendai 980-8578, Japan/ \newline EPF Lausanne, SB--MATHGEOM--EGG, Station 8 CH-1015 Lausanne Switzerland}
\email{mimura-mas@m.tohoku.ac.jp\,/\,masato.mimura@epfl.ch}
\date{\today}
\begin{document}

\begin{abstract}
The following natural question arises from Shalom's innovational work (1999, Publ.\ IH\'{E}S): ``Can we establish an \textit{intrinsic} criterion to synthesize relative fixed point properties into the whole fixed point property \textit{without assuming Bounded Generation?}'' This paper resolves this question in the affirmative. Our criterion works for ones with respect to certain classes of Busemann NPC spaces. It, moreover, suggests a further step toward constructing super-expanders from finite simple groups of Lie type.
\end{abstract}

\keywords{fixed point property; bounded generation; Busemann NPC spaces; super-expanders; Kazhdan's property $(\mathrm{T})$}

\maketitle

\section{Introduction}
\noindent
\textbf{Notation and Conventions.} \textit{Metric spaces are always assumed to be complete.} Throughout this paper, $G$ means  a \textit{finitely generated} group, and $\mathcal{X}$ means  a (non-empty) class of (non-empty) metric spaces $X$.  \textit{All group actions on metric spaces are assumed to be isometric}, unless otherwise stated. A geodesic segment/line always means minimal one (that means, an isometric embedding of a real segment/line). For $n\in \mathbb{Z}_{\geq 1}$, $[n]$ means the set $\{1,2,\ldots ,n\}$, and $\mathrm{Sym}(n)$ denotes the  symmetric group on $[n]$. The symbol $R$ means a unital and associative ring, possibly non-commutative. For an action $\alpha\colon G\curvearrowright X$ and $M\leqslant G$, $X^{\alpha(M)}:=\{x\in X: \textrm{for all $h\in M$, $\alpha(h)\cdot x=x$}\}$. Isomorphisms between Banach spaces always mean linear ones. Our commutator convention is $[\gamma_1,\gamma_2]:=\gamma_1\gamma_2\gamma_1^{-1}\gamma_2^{-1}$. The symbol $\mathrm{dist}(\cdot ,\cdot)$ denotes the (ordinary) distance between two subsets of a metric space.

\subsection{Introduction: what is ``superintrinsic synthesis''?}\label{subsection=ontro}
The main object in this paper is \textit{fixed point property} with respect to (\textit{isometric}) actions.

\begin{definition}[Fixed point property]\label{definition=fpp}
For a countable group $\Lambda$ and $\Lambda\geqslant M$, the pair $\Lambda\geqslant M$ is said to have \textit{relative property} $(\mathrm{F}_{\mathcal{X}})$ if for each $X\in \mathcal{X}$ and for all $\alpha\colon \Lambda \curvearrowright X$, $X^{\alpha(M)}\neq \emptyset$. We say that $G$ has \textit{property} $(\mathrm{F}_{\mathcal{X}})$ if $G\geqslant G$ has relative property $(\mathrm{F}_{\mathcal{X}})$.
\end{definition}
Recall that $G$ is assumed to be \textit{finitely generated}. For relative properties, we only require that groups are countable.  Compare with Definition~\ref{definition=relbanach}.

This property has been paid strong attention from various backgrounds, including

\begin{itemize}
   \item if $\mathcal{X}=\mathcal{H}\mathrm{ilbert}=\mathcal{B}_{L_2}$, the class of all Hilbert spaces (equivalently, that of all $L_2$-spaces), then \textit{property $(\mathrm{F}_{\mathcal{H}\mathrm{ilbert}})$ is equivalent to the celebrated Kazhdan's property $(\mathrm{T})$} (the Delorme--Guichardet theorem);
   \item rigidity on group actions on manifolds and geometric superrigidity (the Navas theorem, Pansu, \cite{GKM});
   \item analytic obstruction to (relative) Gromov-hyperbolicity (Pansu, \cite{BP}, \cite{gerasimov}. \cite{puls}, \cite{AL}, and others);
    \item expander graphs and its strengthening (Margulis and \cite{lafforgue}); and 
    \item robust Banach property $(\mathrm{T})$ (\cite{oppenheim}, \cite{delasalle}) and obstruction to approaches to variants of (coarse) Baum--Connes conjectures (\cite{lafforgue}, \cite{WY}, \cite{GLS}).
\end{itemize}
See a book \cite{BHV} for property $(\mathrm{T})$; and \cite{BFGM} and  a survey \cite{nowak} for Banach spaces case. We will see the last three  items in details in Subsection~\ref{subsection=motivations}.

The main goal of the present  paper is to establish a \textit{superintrinsic synthesis} in property $(\mathrm{F}_{\mathcal{X}})$. In what follows, we describe the meanings of \textit{``synthesis"}, being \textit{``intrinsic''}, and being \textit{``superintrinsic''}.

One major method to show property $(\mathrm{F}_{\mathcal{X}})$ is, so to speak, \textit{``the Part and the Whole''} strategy, which dates back to Kazhdan's original work. It consists of
\begin{itemize}
  \item(``Part Step'':) show relative property $(\mathrm{F}_{\mathcal{X}})$ for $G\geqslant M_i$ for $1\leq i\leq l$, where $M_i$'s are ``tractable'' subgroups; and
  \item(``Synthesis Step'':) synthesize relative property $(\mathrm{F}_{\mathcal{X}})$ into the whole property $(\mathrm{F}_{\mathcal{X}})$.
\end{itemize}

\textit{``Synthesis''} means the latter step. This is the main topic of the present paper. \textit{The} difficulty here is to \textit{show $\bigcap_{1\leq i\leq l}X^{\alpha(M_i)}\ne \emptyset$}  for $\alpha\colon G\curvearrowright X$, under the condition that $X^{\alpha(M_i)}\ne \emptyset$ for all $i$. 

In 1999, Publ.\ IH\'{E}S \cite{shalom1999}, Shalom made a breakthrough, by switching from fixed point properties to bounded orbit properties. This is well-known as \textit{Shalom's Bounded Generation argument}.

\begin{definition}\label{definition=bg}
A subset $1_G\in U\subseteq G$ is said to \textit{Boundedly Generate} $G$ if there exists $N\in \mathbb{N}$ such that each $\gamma\in G$ may be written as the product of $N$ (possibly overlapping) elements in $U$.
\end{definition}

We do \textit{not} assume, here, that $U$ is of the form $U=\bigcup_{1\leq i\leq l}C_i$ for $C_i$ cyclic.

\begin{theorem}[Shalom's first intrinsic synthesis, \cite{shalom1999}]\label{theorem=shalom}
Assume that $\mathcal{X}$ satisfies that ``property $(\mathrm{F}_{\mathcal{X}})$ is equivalent to boundedness property of all $($equivalently, some$)$ orbits of all group actions on every $X\in \mathcal{X}$''. Let $M_1,\ldots ,M_l\leqslant G$. Assume the following hypothesis is fulfilled:

\noindent
\underline{$\mathrm{Hypothesis}$:} the union $\bigcup_{1\leq i\leq l}M_i$ Boundedly\ Generates $G$.

Then, relative properties $(\mathrm{F}_{\mathcal{X}})$ for $G\geqslant M_i$ for all $i$ imply property $(\mathrm{F}_{\mathcal{X}})$ for $G$.
\end{theorem}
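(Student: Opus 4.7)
The plan is to exploit the standing assumption that property $(\mathrm{F}_{\mathcal{X}})$ is characterised by boundedness of $($some$)$ orbit, so that the goal reduces to a single quantitative orbit estimate. Fix an arbitrary action $\alpha\colon G\curvearrowright X$ with $X\in \mathcal{X}$ and a basepoint $x_{0}\in X$; by the hypothesis on $\mathcal{X}$, it suffices to show that $\alpha(G)\cdot x_{0}$ is a bounded subset of $X$.

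For each $i\in [l]$, relative property $(\mathrm{F}_{\mathcal{X}})$ for $G\geqslant M_{i}$ supplies a point $p_{i}\in X^{\alpha(M_{i})}$. Since $\alpha$ is isometric, for every $m\in M_{i}$ the triangle inequality combined with $\alpha(m)\cdot p_{i}=p_{i}$ yields
\[
d(x_{0},\alpha(m)\cdot x_{0}) \leq d(x_{0},p_{i}) + d(\alpha(m)\cdot p_{i},\alpha(m)\cdot x_{0}) = 2\,d(x_{0},p_{i}).
\]
Setting $R := 2\max_{1\leq i\leq l} d(x_{0},p_{i}) < \infty$, we obtain the uniform bound $d(x_{0},\alpha(m)\cdot x_{0})\leq R$ for every $m\in \bigcup_{i} M_{i}$.

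Now I would invoke Bounded Generation to produce $N\in \mathbb{N}$ such that each $\gamma\in G$ factors as $\gamma = m_{1}m_{2}\cdots m_{N}$ with $m_{j}\in \bigcup_{i}M_{i}$. A telescoping triangle estimate, once more using isometry of each left-translation $\alpha(m_{1}\cdots m_{j-1})$, gives
\[
d(x_{0},\alpha(\gamma)\cdot x_{0}) \leq \sum_{j=1}^{N} d\bigl(\alpha(m_{1}\cdots m_{j-1})\cdot x_{0},\, \alpha(m_{1}\cdots m_{j})\cdot x_{0}\bigr) = \sum_{j=1}^{N} d(x_{0},\alpha(m_{j})\cdot x_{0}) \leq NR.
\]
Hence $\alpha(G)\cdot x_{0}$ is contained in the closed ball of radius $NR$ about $x_{0}$, so the orbit is bounded, and the reduction is complete.

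There is no genuine obstacle along this route: the three ingredients---fixed points from relative property, isometry of $\alpha$, and Bounded Generation---plug directly into the triangle inequality. The only non-elementary content sits in the hypothesis on $\mathcal{X}$, namely the implication \emph{bounded orbit} $\Rightarrow$ \emph{fixed point}, which for the classical classes (Hilbert spaces, NPC spaces, \ldots) is delivered by a circumcentre or centre-of-mass construction. The present argument produces boundedness essentially for free; promoting boundedness to an honest common fixed point is what demands additional geometric structure on $X$. This delineation is precisely the source of the restriction that the remainder of the paper aims to circumvent via a superintrinsic mechanism.
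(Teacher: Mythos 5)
Your proposal is correct and follows essentially the same route as the paper: the paper compresses the argument into the single observation that $d(\alpha(\gamma_1\gamma_2)\cdot x,x)\leq d(\alpha(\gamma_1)\cdot x,x)+d(\alpha(\gamma_2)\cdot x,x)$, which is exactly the subadditivity your telescoping estimate iterates $N$ times. Your write-up merely supplies the remaining routine steps (using the fixed points $p_i$ to bound displacements over $\bigcup_i M_i$, then invoking Bounded Generation), so the two arguments coincide in substance.
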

Examples of such $X\in \mathcal{X}$ in the assumption above include reflexive Banach spaces (Ryll-Nardzewski's theorem); $L$-embedded Banach spaces such as (non-commutative) $L_1$-spaces (\cite{BGM}); and  $\mathrm{CAT}$$(0)$ spaces (see a book \cite{BH} for $\mathrm{CAT}$$(0)$ spaces). The proof is immediate once we observe that for all $x\in X$ and for all $\gamma_1,\gamma_2\in G$,
\[
d(\alpha(\gamma_1\gamma_2)\cdot x,x)\leq d(\alpha(\gamma_1\gamma_2)\cdot x,\alpha(\gamma_1)\cdot x)+d(\alpha(\gamma_1)\cdot x,x) =d(\alpha(\gamma_2)\cdot x,x)+d(\alpha(\gamma_1)\cdot x,x).
\]

The key here is that for bounded orbit properties, we do not need to care exact locations of points to consider orbits. The price to pay is, we need \textit{Bounded} Generation rather than ordinary one, to control errors of orbits from being a singleton.

By Theorem~\ref{theorem=shalom}, Shalom provided the first proof of property $(\mathrm{F}_{\mathcal{H}\mathrm{ilbert}})$ for $\mathrm{SL}(n,\mathbb{Z})$ for $n\geq 3$ without employing $\mathrm{SL}(n,\mathbb{R})$. This was done by combining the following two facts. Here, for $R$ and $n\geq 2$, the \textit{elementary group} $\mathrm{E}(n,R)$ is the subgroup of $\mathrm{GL}(n,R)$ generated by \textit{elementary matrices} $\{e_{i,j}(r): i\ne j\in [n] ,r\in R\}$, where $(e_{i,j}(r))_{l,k}=\delta_{l,k}+r \delta_{i,l}\delta_{j,k}$ ($\delta_{\cdot,\cdot}$ denotes the Dirac delta). The commutator relation
\[
[e_{i,j}(r_1),e_{j,k}(r_2)]=e_{i,k}(r_1r_2) \tag{$*$}
\]
for $i\ne j\ne k\ne i$ implies finite generation of $\mathrm{E}(n,R)$ for a finitely generated $R$ and for $n\geq 3$. In some literature such as \cite{EJ}, $\mathrm{E}(n,R)$ is written as $EL(n,R)$.

\begin{theorem}[Kassabov \cite{kassabov} for general cases]\label{theorem=relT}
Let $G=\mathrm{E}(n,R)$, $M=\langle e_{i,n}(r): i\in [n-1],r\in R\rangle =
\left(\begin{array}{cc}
I_{n-1} & R^{n-1} \\
0 & 1 
\end{array}
\right)(\simeq (R^{n-1},+))$, and $L=\langle e_{n,j}(r): j\in [n-1],r\in R\rangle =
\left(\begin{array}{cc}
I_{n-1} & 0 \\
{}^t(R^{n-1}) & 1 
\end{array}
\right)$. Then, for every $n\geq 3$ and for every finitely generated $R$, $G\geqslant M$ and $G\geqslant L$ have relative property $(\mathrm{F}_{\mathcal{H}\mathrm{ilbert}})$.
\end{theorem}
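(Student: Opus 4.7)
The plan is to reduce both assertions to the standard Mackey-type analysis of a semidirect product $\Gamma\ltimes A$ of a group $\Gamma$ acting by automorphisms on a locally compact abelian group $A$, and then to invoke a spectral criterion of Burger. I treat the $M$-case; the $L$-case is entirely symmetric, using the analogous action of the upper-left $\mathrm{E}(n-1,R)$ block on $L$ viewed as row vectors (obtained by taking commutators of elements of $L$ with the same elementary matrices). For $M$: using the commutator relation $(*)$ with $k=n$, the upper-left $\mathrm{E}(n-1,R)$ block normalises $M$ and acts on $M\simeq (R^{n-1},+)$ by left multiplication on column vectors. Consequently $H:=\mathrm{E}(n-1,R)\ltimes M$ is a subgroup of $G$ containing $M$, and relative property $(\mathrm{F}_{\mathcal{H}\mathrm{ilbert}})$ passes from $(H,M)$ up to $(G,M)$ simply by restricting affine isometric actions. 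Hence it suffices to prove the pair $(H,M)$ has relative property $(\mathrm{F}_{\mathcal{H}\mathrm{ilbert}})$.

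For the pair $(H,M)$ with $M$ abelian, I would apply the Burger--Shalom criterion: because of the Delorme--Guichardet equivalence, $(H,M)$ has relative property $(\mathrm{F}_{\mathcal{H}\mathrm{ilbert}})$ if and only if the trivial character $0\in\widehat{M}$ is isolated among all $\mathrm{E}(n-1,R)$-invariant regular probability measures on the Pontryagin dual $\widehat{M}$ in the weak-$*$ topology. Equivalently, one needs to exclude any sequence of $\mathrm{E}(n-1,R)$-invariant probability measures concentrated on $\widehat{M}\setminus\{0\}$ converging weakly to $\delta_0$. Dualising $(*)$, translation of $\chi\in\widehat{M}$ by $e_{i,j}(r)$ shears the $i$-th coordinate of $\chi$ by an $R$-multiple of its $j$-th coordinate, supplying a rich family of ``mass-transport'' operations on $\widehat{M}$ that should preclude such concentration.

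The principal obstacle is establishing the above non-accumulation property for an \emph{arbitrary} finitely generated ring $R$. When $R=\mathbb{Z}$, $\widehat{M}=\mathbb{T}^{n-1}$ is a finite-dimensional torus and the classical Burger--Shalom argument via Fourier coefficients of $\mathrm{SL}(n-1,\mathbb{Z})$-invariant measures dispatches the claim almost immediately. For a general finitely generated $R$, however, $\widehat{M}$ is a much less tractable locally compact abelian group and the orbit structure of $\mathrm{E}(n-1,R)$ on it is considerably more delicate; this is exactly where the extension beyond Shalom's original theorem becomes non-trivial. This is the step carried out by Kassabov in \cite{kassabov}: after fixing a finite generating set of $R$, he iterates the shearing operations coming from $(*)$ to force every nontrivial $\mathrm{E}(n-1,R)$-orbit in $\widehat{M}$ to escape any prescribed neighbourhood of $0$, thereby yielding the required isolation and completing the proof.
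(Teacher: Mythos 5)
The paper does not give its own proof of this theorem: Theorem~\ref{theorem=relT} is explicitly attributed to Kassabov~\cite{kassabov}, and the paper's Theorem~\ref{theorem=relTbanach}.(1) records the genuine form of that result, namely relative property $(\mathrm{T}_{\mathcal{H}\mathrm{ilbert}})$ for $\mathrm{E}(m,R)\ltimes R^m\trianglerighteq R^m$ with $m\geq 2$, from which the stated form follows by the natural embedding into $\mathrm{E}(n,R)$ and the Delorme--Guichardet equivalence. Your reduction to the pair $(H,M)$ with $H=\mathrm{E}(n-1,R)\ltimes M$, followed by an invocation of Kassabov's spectral-gap argument for that semidirect product, is exactly this route, so the approach matches. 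Two small cautions: first, the ``isolation of $\delta_0$'' criterion you state is slightly loose as written (the correct formulation is about excluding sequences of $\mathrm{E}(n-1,R)$-invariant probability measures on $\widehat{M}$ giving zero mass to $\{0\}$ yet converging weak-$*$ to $\delta_0$), though you essentially correct it in the next sentence; second, for \emph{non-commutative} $R$ (which the paper explicitly allows) the $L$-case is not literally symmetric --- conjugation of $L$ by the upper-left block $\mathrm{E}(n-1,R)$ gives the \emph{right}-multiplication action $v\mapsto vg^{-1}$ on row vectors, so one must apply Kassabov's theorem to the opposite ring $R^{\mathrm{op}}$ (which is also finitely generated) rather than to $R$ itself. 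With these clarifications your proposal is correct and takes the same route as the paper, which in both cases is simply to cite Kassabov for the substantive content.
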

Note that if $R=\mathbb{Z}$, then $\mathrm{E}=\mathrm{SL}$ by Gaussian elimination. 

\begin{theorem}[Carter--Keller \cite{CK}]\label{theorem=CK}
For $G$, $M$, $L$ as in Theorem~$\ref{theorem=relT}$, if $n\geq 3$ $\mathrm{and\ if}$ $R=\mathbb{Z}$, then $M\cup L$ Boundedly Generates $G$.
\end{theorem}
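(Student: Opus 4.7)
The plan is a two-step reduction that combines the original Carter--Keller bounded-generation theorem for $\mathrm{SL}(n,\mathbb{Z})$ with the commutator identity $(*)$ to absorb arbitrary elementary matrices into short words in $M\cup L$.

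First, I would use that $\mathrm{SL}(n,\mathbb{Z})=\mathrm{E}(n,\mathbb{Z})=G$ by Gaussian elimination, and invoke the deep content of \cite{CK}: there is a constant $N_0=N_0(n)\in\mathbb{N}$ such that every $\gamma\in G$ may be written as a product of at most $N_0$ elementary matrices $e_{i,j}(r)$ (with $i\ne j\in[n]$, $r\in\mathbb{Z}$). This is the main obstacle of the whole argument; it draws on nontrivial number-theoretic inputs such as Dirichlet's theorem on primes in arithmetic progressions, single-element generation of ideals modulo principal ones, and control of commutator widths in congruence subgroups of $\mathrm{SL}_n$. I would simply import this as a black box.

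Next, I would reduce each elementary matrix to a bounded word in $M\cup L$. If $j=n$ then $e_{i,n}(r)\in M$ is a single element, and if $i=n$ then $e_{n,j}(r)\in L$ is a single element. In the remaining case $i,j\in[n-1]$ with $i\ne j$, the auxiliary index $k:=n$ lies outside $\{i,j\}$ and is used once as a column index (in $M$) and once as a row index (in $L$); the commutator relation $(*)$ then gives
\[
e_{i,j}(r)=[e_{i,n}(r),e_{n,j}(1)]=e_{i,n}(r)\cdot e_{n,j}(1)\cdot e_{i,n}(-r)\cdot e_{n,j}(-1),
\]
a product of four alternating elements of $M\cup L$. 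The hypothesis $n\geq 3$ is exactly what guarantees the existence of such a third index $n\notin\{i,j\}$.

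Concatenating the two steps, every $\gamma\in G$ is a product of at most $4N_0$ elements of $M\cup L$, and since $1_G\in M\cup L$, this realises $M\cup L$ as a bounded generating set in the sense of Definition~\ref{definition=bg} with constant $N:=4N_0$. Beyond citing \cite{CK} for the deep first step and applying $(*)$ formally in the second, I expect no additional input to be required.
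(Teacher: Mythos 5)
The paper does not prove Theorem~\ref{theorem=CK}; it is cited to Carter--Keller as a black-box input, so there is no in-paper argument to compare against. Your derivation is the correct and standard way to pass from Carter--Keller's bounded elementary generation of $\mathrm{SL}(n,\mathbb{Z})$ (existence of $N_0=N_0(n)$ bounding the elementary word length) to bounded generation by $M\cup L$: for $i,j\in[n-1]$, $i\ne j$, the commutator relation $(*)$ with the paper's convention $[\gamma_1,\gamma_2]=\gamma_1\gamma_2\gamma_1^{-1}\gamma_2^{-1}$ gives $e_{i,j}(r)=[e_{i,n}(r),e_{n,j}(1)]$, a $4$-letter word in $M\cup L$ (using that $M,L$ are subgroups, hence inverse-closed), and the requirement $n\geq 3$ is precisely what supplies the auxiliary index $n\notin\{i,j\}$. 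Concatenation then yields the bound $N=4N_0$ in the sense of Definition~\ref{definition=bg}. So your proposal is correct and self-contained given the cited deep input, and it fills in a reduction that the paper leaves implicit in its attribution.
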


The synthesis in Theorem~\ref{theorem=shalom} is \textit{``intrinsic''}, more precisely, the hypothesis is stated only in term of structures inside the group, and not of \textit{extrinsic} data such as information on group actions (including spectral data). Therefore, as long as relative properties $(\mathrm{F}_{\mathcal{X}})$ for $G\geqslant M$ and $G\geqslant L$ are proved, where $G,M$ and $L$ are as in Theorem~\ref{theorem=CK} and $\mathcal{X}$ as in Theorem~\ref{theorem=shalom}, we may obtain property $(\mathrm{F}_{\mathcal{X}})$ for $G$ without any extra effort. In this sense, \textit{intrinsic synthesis is robust under changing the class $\mathcal{X}$}. However, to the best knowledge of the author, all intrinsic syntheses in previous work (\cite{shalom1999} and \cite{shalom2006}) imposed some form of \textit{Bounded Generation}. From this background, the following natural question arises from \cite{shalom1999}, which \textit{was} open for more than 15 years.

\begin{problem}[Superintrinsic synthesis problem]\label{problem=shalom}
Can we achieve $\mathrm{intrinsic}$ synthesis whose hypotheses are free from Bounded Generation hypotheses?
\end{problem}

In this paper, we call such synthesis \textit{super}intrinsic one. To the best knowledge of the author, this problem might \textit{have been} considered as being hopeless, because the aforementioned argument makes essential use of Bounded Generation to control $G$-orbits. 

The main results of the present paper are the following.
\begin{itemize}
  \item (\underline{Theorems~\ref{mtheorem=main1}, \ref{mtheorem=main1b}, \ref{mtheorem=main2}, and \ref{mtheorem=sr}}) \textit{Resolutions of Problem~$\ref{problem=shalom}$ in the affirmative}. 
  \item (\underline{Corollaries~\ref{corollary=st} and \ref{corollary=expanders}}) Application of  our criteria to elementary/Steinberg groups, and expanders associated with them (see Subsection~\ref{subsection=motivations}).
  \item (\underline{Corollary~\ref{corollary=superexpanders}}) Reduction of Conjecture~\ref{conjecture=superexpanders}.$(2)$ on \textit{super-expanders coming from special linear groups over finite fields} to \textit{relative  property $(\mathrm{T})$} with respect to uniformly convex Banach spaces (see Definitions~\ref{definition=superexpanders} and \ref{definition=relbanach}). 
\end{itemize}
For precise statements, see Subsection~\ref{subsection=main}. We emphasize that our superintrinsic synthesis may play an essential role in study of fixed point property with respect to a class with ``\textit{unbounded wildness}'', such as the class of (non-commutative) $L_q$-spaces for \textit{all} $q\in (1,\infty)$ or that of all uniformly convex Banach spaces. \textit{Extrinsic} synthesis may have difficulty in such cases. See Subsection~\ref{subsection=motivations} for detailed discussions.

Our synthesis also works for a class of certain \textit{non-linear} metric spaces, such as certain \textit{Busemann Non-Positively Curved space}, hereafter we write as a \textit{BNPC space} for short. See Remark~\ref{remark=bnpc} for the definition. For the third item above, inspired by the great achievement in work of Ershov, Jaikin-Zapirain, and Kassabov \cite[Theorem 9.3]{EJK} for ordinary expanders, we, here, conjecture the following. See Subsection~\ref{subsection=motivations} for  details. For the restriction to rank $\geq 3$, see Remark~\ref{remark=ngeq4}.

\begin{conjecture}[\textit{Unbounded rank super-expanders conjecture}]\label{conjecture=superexpanders}
\begin{enumerate}[$(1)$]
  \item For every prime $p$, the sequence $(\mathrm{SL}(n,\mathbb{F}_{p}))_{n\geq 4}$ $\mathrm{can}$ $\mathrm{form}$ super-expanders $($that means, the sequence of the Cayley graphs of them with respect to suitable choices of system of finite generating sets becomes super-expanders$)$.
  \item For every sequence of primes $(p_n)_n$, $(\mathrm{SL}(n,\mathbb{F}_{p_n}))_{n\geq 4}$ can form super-expanders.
  \item The family of all simple group of Lie type and rank at least $3$ has a mother group with property $(\mathrm{F}_{\mathcal{B}_{\mathrm{uc}}})$.
\end{enumerate}
\end{conjecture}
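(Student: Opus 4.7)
The plan is to deduce (1) and (2) from (3), and to attack (3) via a mother-group construction. Recall that once a finitely generated group $\Gamma$ with property $(\mathrm{F}_{\mathcal{B}_{\mathrm{uc}}})$ surjects onto a family of finite groups $\{Q_n\}_n$, the Cayley graphs of the $Q_n$ (with respect to images of a fixed finite generating set of $\Gamma$) form super-expanders by the standard averaging/Poincar\'e inequality mechanism of \cite{lafforgue} and \cite{delasalle}. So (3) will imply (2), and hence (1) as a special case, provided one can choose a mother group $\Gamma$ that surjects onto every $\mathrm{SL}(n,\mathbb{F}_{p_n})$ for $n\geq 4$. I would take $\Gamma$ to be a Steinberg or elementary group $\mathrm{St}(n_0,R_0)$ or $\mathrm{E}(n_0,R_0)$ with $n_0\geq 4$ over a suitably universal finitely generated ring $R_0$ (for instance $R_0 = \mathbb{Z}\langle x_1,\ldots,x_k\rangle$), so that, after a stabilization in the rank $n_0$ if necessary, every targeted finite simple group of Lie type of rank $\geq 3$ appears as a quotient of $\Gamma$.

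To establish property $(\mathrm{F}_{\mathcal{B}_{\mathrm{uc}}})$ for $\Gamma$, I would invoke the superintrinsic synthesis of Theorems~\ref{mtheorem=main1}, \ref{mtheorem=main1b}, \ref{mtheorem=main2}, and \ref{mtheorem=sr}. Following the Hilbert template of Theorem~\ref{theorem=relT}, take $M_1,\ldots,M_l\leqslant \Gamma$ to be the abelian ``column'' and ``row'' unipotent subgroups, together with a rank-$(n_0-1)$ internal copy of $\Gamma$ used to carry the commutator relation $(*)$. One verifies whatever purely intrinsic combinatorial/commutator input the synthesis theorems demand, \emph{without} appealing to bounded generation. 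The synthesis then collapses property $(\mathrm{F}_{\mathcal{B}_{\mathrm{uc}}})$ for $\Gamma$ into the finite list of \emph{relative} properties $(\mathrm{F}_{\mathcal{B}_{\mathrm{uc}}})$ for the pairs $\Gamma\geqslant M_i$. Crucially, this step dispenses with Carter--Keller (Theorem~\ref{theorem=CK}), which is tied to $R=\mathbb{Z}$ and has no known analogue for the noncommutative $R_0$ we need here.

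The genuine obstacle, as already anticipated by Corollary~\ref{corollary=superexpanders}, is the \emph{relative property $(\mathrm{T})$ step for uniformly convex Banach spaces}: upgrading Kassabov's Hilbert relative property to the whole class $\mathcal{B}_{\mathrm{uc}}$ for the abelian pairs $\mathrm{E}(n_0,R_0)\geqslant (R_0^{n_0-1},+)$. In the Hilbert case one has representation theory and spectral gap; in the Banach case the only existing toolkit proceeds through Lafforgue--de la Salle--Oppenheim-style strong property $(\mathrm{T})$, and has so far been implemented for higher-rank simple Lie groups and their lattices, not for semidirect products with noncommutative rings. I would attempt this by combining a Banach-valued Howe--Moore type decay with a uniform interpolation in the modulus-of-convexity parameter $q$, but the required uniformity in $q$, coupled with the noncommutativity of $R_0$, is where I expect the argument to stand or fall; the remainder of the route to (3), and thence to (1) and (2), is then a direct assembly using the synthesis machinery of this paper.
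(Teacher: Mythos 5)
The statement you were asked to prove is a \emph{conjecture}, and the paper does not prove it; it is posed as an open problem. The paper's contribution toward it is only a \emph{reduction}: Lemma~\ref{lemma=superexpanders} shows that property $(\mathrm{F}_{\mathcal{B}_{\mathrm{uc}}})$ for $\mathrm{E}(4,\mathbb{F}_p\langle s,t\rangle)$ (resp.\ $\mathrm{E}(4,\mathbb{Z}\langle s,t\rangle)$) would imply part $(1)$ (resp.\ $(2)$), and Corollary~\ref{corollary=superexpanders} reduces these in turn to relative property $(\mathrm{T}_{\mathcal{B}_{\mathrm{uc}}})$ for $\mathrm{E}(2,A)\ltimes A^2\trianglerighteq A^2$, via Corollary~\ref{corollary=st}.$(a)$ (superintrinsic synthesis) and Proposition~\ref{proposition=rankraising} (rank-raising). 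So there is no proof in the paper to match your attempt against, and your proposal, as you yourself acknowledge, does not supply one either. As a \emph{strategy}, what you lay out tracks the paper's reduction closely: the mother group over a noncommutative polynomial ring is the paper's $\mathbb{Z}\langle s,t\rangle$, the synthesis step replacing Carter--Keller is the content of Theorems~\ref{mtheorem=main1}--\ref{mtheorem=sr}, and the obstacle you name at the end is precisely the residue left open by Corollary~\ref{corollary=superexpanders}.

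The genuine gap is the final step, and your flagging of it is correct but your proposed attack is not an argument. ``Banach-valued Howe--Moore decay combined with uniform interpolation in the modulus-of-convexity parameter $q$'' is not a known technique for establishing relative property $(\mathrm{T}_{\mathcal{B}_{\mathrm{uc}}})$ for $\mathrm{E}(2,R_0)\ltimes R_0^2\trianglerighteq R_0^2$ with $R_0$ noncommutative; the existing toolkit (Lafforgue's strong property $(\mathrm{T})$, Oppenheim's almost-orthogonality) delivers the Hilbert case, or classes of \emph{bounded} wildness such as $L_q$ in a bounded range, but not all of $\mathcal{B}_{\mathrm{uc}}$. One further caveat on part $(3)$: covering all simple groups of Lie type of rank $\geq 3$ requires handling root systems other than $A_{n-1}$, which the paper only sketches in Remark~\ref{remark=root}; your plan to ``stabilize in rank'' does not address the non-$A$ series. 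What you have is a restatement of the paper's reduction together with an unimplemented idea for the remaining step, not a proof of the conjecture.
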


One remark is that $(b)$.$(1)$ of Corollary~\ref{corollary=st}, in particular, provides the following result, which is one of the main results in \cite{EJ}.
\begin{theorem}[Ershov and Jaikin-Zapirain, Theorem~1.1 in \cite{EJ}]\label{theorem=EJ}
For every $n\geq 3$ and for every finitely generated $R$, $\mathrm{E}(n,R)$ has property $(\mathrm{F}_{\mathcal{H}\mathrm{ilbert}})$.
\end{theorem}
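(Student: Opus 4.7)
The plan is to deduce Theorem~\ref{theorem=EJ} as an immediate application of the superintrinsic synthesis machinery promised in Corollary~\ref{corollary=st}.(b).(1), specialized to the class $\mathcal{X}=\mathcal{H}\mathrm{ilbert}$. The point is that the proof should \emph{not} invoke any Bounded Generation statement in the sense of Carter--Keller (Theorem~\ref{theorem=CK}), since no such statement is known---and, for many finitely generated $R$, none is expected---for $\mathrm{E}(n,R)$.

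First I would feed Kassabov's relative property result (Theorem~\ref{theorem=relT}) into the machine: for $G=\mathrm{E}(n,R)$ and $M$, $L$ as in the statement of that theorem, both $G\geqslant M$ and $G\geqslant L$ have relative property $(\mathrm{F}_{\mathcal{H}\mathrm{ilbert}})$. These are the two ``Part Step'' inputs, and for $\mathcal{X}=\mathcal{H}\mathrm{ilbert}$ they are already established in the literature, independently of the size of $R$ or the availability of Bounded Generation.

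The essential intrinsic datum that replaces Bounded Generation is the Steinberg-type commutator relation $(*)$ recalled in the excerpt,
\[
[e_{i,n}(r),\,e_{n,j}(1)]=e_{i,j}(r)\qquad (i\ne j,\ i,j\in[n-1]),
\]
together with its obvious companions. This expresses every elementary matrix of $G$ as a \emph{single} commutator of one element of $M$ and one element of $L$ (plus the matrices already inside $M\cup L$). So $M\cup L$ not only generates $G$, but does so through an algebraic pattern (one commutator suffices to manufacture each remaining generator) which is purely intrinsic to the group and does not depend on any metric word-length control. This is precisely the kind of algebraic input that Corollary~\ref{corollary=st}.(b).(1) is designed to consume: one checks the explicit hypotheses of the corollary for the pair $(M,L)$ inside $\mathrm{E}(n,R)$, which are tailor-made to be verified by the Steinberg relations, and reads off property $(\mathrm{F}_{\mathcal{H}\mathrm{ilbert}})$ for $G$. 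The Delorme--Guichardet equivalence then translates this into Kazhdan's property $(\mathrm{T})$.

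The main obstacle---which is the content of the paper, not of this deduction---is of course the synthesis itself. Shalom's orbit-boundedness argument (Theorem~\ref{theorem=shalom}) breaks down here because for a generic $\alpha\colon G\curvearrowright \mathcal{H}$ one has to actually locate a common fixed point, not merely a bounded orbit, and the triangle-inequality trick quoted in the excerpt costs one a factor that accumulates uncontrollably once the number of $M\cup L$-letters needed to express $\gamma\in G$ is unbounded. The superintrinsic synthesis theorems replace this missing metric control by the algebraic control coming from relations of type $(*)$; once that replacement is in place, Theorem~\ref{theorem=EJ} drops out of the two-line verification sketched above, uniformly in the finitely generated ring $R$.
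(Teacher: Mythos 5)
Your overall plan is correct and matches the paper's route: feed Kassabov's relative property result into the superintrinsic synthesis machine, using the commutator relations $(*)$ to supply the $(\mathrm{GAME}^+)$ winning strategy rather than any Bounded Generation statement. One small bookkeeping point: Corollary~\ref{corollary=st}.(b).(1) is the \emph{unconditional} result (it has no hypotheses to check, and is stated for $\mathrm{St}(n,R)$); what you are actually applying is Theorem~\ref{mtheorem=main1b} directly to $G=\mathrm{E}(n,R)$ with $\Pi$ trivial, using Example~\ref{example=st} for the $(\mathrm{GAME}^+)$ verification and Theorem~\ref{theorem=relT} for the Part Step --- this works because Hilbert spaces are $\mathrm{CAT}(0)$, so $(\mathrm{GAME}^+)$ suffices and $n=3$ is covered. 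This is a mild and valid shortcut compared to the paper's path through Steinberg groups plus rank-raising, which is needed there only to reach the other Banach space classes in Corollary~\ref{corollary=st}.(b).(2)--(3).
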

We refer the reader to a short  expository article \cite{mimuraT}, in which we focus on presentation of an alternative proof of Theorem~\ref{theorem=EJ} simpler than the one in \cite{EJ}.  Unlike the original proof in \cite{EJ}, our approach, however, does \textit{not} supply any estimate of Kazhdan constants.

Our work is based on \textit{self-improvement argument}, which is inspired by the second intrinsic synthesis by Shalom \cite[4.III]{shalom2006} (he called his argument \textit{algebraization}). 

\subsection{The heart of our argument: self-improvement, Pseudo-Uniqueness, and assumption $(\mathrm{TP})$}\label{subsection=heart}
Our superintrinsic synthesis (Theorem~\ref{mtheorem=main1}) consists of six assumptions on $\mathcal{X}$; and three hypotheses on $G$, including an hypothesis concerning a ``\textrm{Game}'' that is introduced in the present paper. At a first glance, it might look too complicated. However, all of the assumptions/hypotheses have root in the following simple \textit{self-improvement argument}. For this reason, before proceeding in the statement of our Theorem~\ref{mtheorem=main1}, we will describe this \textit{heart} of the arguments. 

Here, we concentrate on one example to roughly see   how to achieve superintrinsic synthesis in property $(\mathrm{F}_{\mathcal{H}\mathrm{ilbert}})$. Our group here is $G=\mathrm{E}(n,R)$ as in Theorem~\ref{theorem=relT}. Let $M\leqslant G$ and $L\leqslant G$ be as in Theorem~\ref{theorem=relT}. Let $\alpha\colon G\curvearrowright \mathcal{H}$ for a Hilbert space $\mathcal{H}$. By relative property $(\mathrm{F}_{\mathcal{H}\mathrm{ilbert}})$, $\mathcal{H}^{\alpha(M)}\ne \emptyset$ and $\mathcal{H}^{\alpha(L)} \ne \emptyset$. We assume the following.

\begin{assumption}[Existence and Uniqueness assumption]\label{assumption=unique}
There $\mathrm{exists}$ a $\mathrm{unique}$ pair $(\xi,\eta)$, where $\xi \in\mathcal{H}^{\alpha(M)}$ and $\eta \in\mathcal{H}^{\alpha(L)}$, that realizes $D:=\mathrm{dist}(\mathcal{H}^{\alpha(M)},\mathcal{H}^{\alpha(L)})$.
\end{assumption}
Then, our \textit{self-improve argument} goes as follows.
\begin{proposition}[\textit{Heart} of our self-improvement argument]\label{proposition=heart}
Let $G,M,L$ and $(\alpha,\mathcal{H})$ be as in the paragraph above Assumption~$\ref{assumption=unique}$. Then, $\mathrm{under}$ $\mathrm{Assumption}$~$\ref{assumption=unique}$, the realizer $(\xi,\eta)$, in fact, satisfies that $\xi \in \mathcal{H}^{\alpha(G)}$ and $\eta \in \mathcal{H}^{\alpha(G)}$. 
\end{proposition}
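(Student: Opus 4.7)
The plan is to run a self-improvement argument in two stages: first enlarge the common stabilizer of $\xi$ and $\eta$ via a subgroup that simultaneously normalizes $M$ and $L$, and then collapse the minimizer pair to a single $G$-fixed point using the commutator relation $(*)$.

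For the first stage, I would consider the subgroup $P \leqslant G$ generated by $\{e_{i,j}(r) : i, j \in [n-1],\ i \neq j,\ r \in R\}$, the natural copy of $\mathrm{E}(n-1, R)$ sitting in the upper-left $(n-1)\times(n-1)$ block. A direct matrix calculation shows $pMp^{-1} = M$ and $pLp^{-1} = L$ for every $p \in P$. Hence $\alpha(p)$ preserves $\mathcal{H}^{\alpha(M)}$ and $\mathcal{H}^{\alpha(L)}$; being an isometry, it sends a distance-$D$ pair to a distance-$D$ pair. Thus $(\alpha(p)\xi, \alpha(p)\eta)$ also realizes $D$, and Assumption~\ref{assumption=unique} forces $\alpha(p)\xi = \xi$ and $\alpha(p)\eta = \eta$. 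Together with the $M$-fixedness of $\xi$ and the $L$-fixedness of $\eta$, the stabilizers of $\xi$ and of $\eta$ already contain the two opposite standard parabolic subgroups $\langle M, P\rangle$ and $\langle L, P\rangle$ respectively.

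The decisive second stage is to prove $\xi = \eta$. Once this holds, the commutator identity $(*)$ applied to distinct $i,j \in [n-1]$ gives $e_{i,j}(r) = [e_{i,n}(r), e_{n,j}(1)] \in \langle M, L\rangle$, so $\langle M, L\rangle = G$ and the common point $\xi = \eta$ is $G$-fixed. To produce $\xi = \eta$, set $v := \eta - \xi$. The minimality of $D$ gives $v \in V_M^{\perp} \cap V_L^{\perp}$, where $V_M, V_L$ denote the translation parts of the affine subspaces $\mathcal{H}^{\alpha(M)}, \mathcal{H}^{\alpha(L)}$. For $m \in M$, writing $\pi(\cdot)$ for the linear part of $\alpha(\cdot)$ based at $\xi$, the image $\alpha(m)\eta = \xi + \pi(m)v$ lies at distance $D$ from $\xi$; its Hilbert-space projection onto the closed affine subspace $\mathcal{H}^{\alpha(L)}$ is therefore at distance $\leq D$ from $\xi$, so by Uniqueness equals $\eta$. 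This yields $(\pi(m) - I)v \in V_L^{\perp}$, and symmetrically $(\pi(l) - I)v \in V_M^{\perp}$ for $l \in L$. Combining these orthogonality constraints with the $P$-invariance $\pi(p)v = v$ and the commutator factorization $e_{i,j}(r) = [e_{i,n}(r), e_{n,j}(1)]$ should then bootstrap $v$ to a $\pi(G)$-invariant vector and finally force $v = 0$.

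The main obstacle I anticipate is precisely this final deduction of $v = 0$: the orthogonality relations and the $P$-invariance do not immediately yield $\pi(g)v = v$ for arbitrary $g \in G$, and one must carefully chase $[e_{i,n}(r), e_{n,j}(1)]$ through both opposite fixed-point sets using the orthogonality conditions. Here the specifically Hilbertian projection trick does the heavy lifting, which already hints at why the general Banach/BNPC version of Theorem~\ref{mtheorem=main1} announced in the excerpt will need the extra Pseudo-Uniqueness and $(\mathrm{TP})$ hypotheses rather than this bare Uniqueness assumption.
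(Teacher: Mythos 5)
Your first stage matches the paper exactly: use $P = \langle e_{i,j}(r) : i\ne j\in[n-1]\rangle$, observe that conjugation by $p\in P$ maps $\mathcal{H}^{\alpha(M)}$ and $\mathcal{H}^{\alpha(L)}$ into themselves, and invoke Uniqueness to conclude $\xi\in\mathcal{H}^{\alpha(\langle M,P\rangle)}$, $\eta\in\mathcal{H}^{\alpha(\langle L,P\rangle)}$. The gap is in your second stage, and it is a real one.

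Concretely: the claim that the nearest-point projection of $\alpha(m)\eta$ onto $\mathcal{H}^{\alpha(L)}$ lies at distance $\leq D$ from $\xi$ is false. Since the projection lands in $\mathcal{H}^{\alpha(L)}$ and $\xi\in\mathcal{H}^{\alpha(M)}$, that distance is automatically $\geq D$, and in general it is strictly larger (take $\mathcal{H}=\mathbb{R}^2$ with the two fixed sets being parallel vertical lines and move a point around the circle of radius $D$ about $\xi$: the projection of that point onto the second line is at distance $\geq D$ from $\xi$, with equality only at the antipodal point). So Uniqueness cannot be invoked here and the deduction $(\pi(m)-I)v\in V_L^{\perp}$ does not follow. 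Beyond that, even granting the orthogonality relations you never close the argument to $v=0$, and you say yourself that you do not see how to. That acknowledged hole is precisely the content of the proposition.

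The paper's second move is structurally different and avoids all of this. Having set $H_1=\langle M,P\rangle$ and $H_2=\langle L,P\rangle$, it conjugates by the Weyl element $w=e_{1,n}(1)e_{n,1}(-1)e_{1,n}(1)$, for which $wH_2w^{-1}\geqslant M$ and $wH_1w^{-1}\geqslant L$. Hence $\alpha(w)\eta\in\mathcal{H}^{\alpha(M)}$ and $\alpha(w)\xi\in\mathcal{H}^{\alpha(L)}$, so $(\alpha(w)\eta,\alpha(w)\xi)$ is \emph{another realizer}, in the swapped order. Uniqueness forces $\eta=\alpha(w)\xi$ (and $\xi=\alpha(w)\eta$), whence $\xi\in\mathcal{H}^{\alpha(H_1)}\cap\mathcal{H}^{\alpha(w^{-1}H_2w)}=\mathcal{H}^{\alpha(\langle H_1,w^{-1}H_2w\rangle)}=\mathcal{H}^{\alpha(G)}$, the last equality by the commutator relation $(*)$. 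The crucial point your proposal misses is that after the first enlargement the two fixed-point sets can be swapped by a single group element, so one never needs to prove $\xi=\eta$ directly via projections; that equality comes out at the very end for free once $\xi\in\mathcal{H}^{\alpha(G)}$. This is also why no Hilbert-specific machinery (projections, orthogonal complements, the linear/cocycle decomposition) is needed in the heart of the argument — only uniqueness and isometry — which is exactly what makes the later non-linear generalization possible.
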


\begin{proof}
Recall from our notation that $\alpha$ is isometric. The idea is that we enlarge two subgroups $H_1$ and $H_2$ of $G$ stage by stage such that $\xi \in \mathcal{H}^{\alpha(H_1)}$ and $\eta \in \mathcal{H}^{\alpha(H_2)}$. In initial stage, set $H_1=M$ and $H_2=L$. 

\textit{Our first move $($enlargement$)$} uses $P:=\langle e_{i,j}(r):i\ne j\in [n-1],r\in R\rangle =\left(\begin{array}{cc}
\mathrm{E}({n-1},R) & 0 \\
0 & 1 
\end{array}
\right)$. Let $h\in P$. Observe $hH_1h^{-1}(:=hMh^{-1})\geqslant M$ and $hH_2h^{-1}(:=hLh^{-1})\geqslant L$. It exactly says that $\alpha(h)\cdot \xi \in \mathcal{H}^{\alpha(M)}$ and $\alpha(h)\cdot \eta \in \mathcal{H}^{\alpha(L)}$. By isometry of $\alpha$, $(\alpha(h)\cdot \xi ,\alpha(h)\cdot \eta)$ is \textit{another realizer} of $D$. By uniqueness, this must coincide with $(\xi,\eta)$. Therefore, 
\[
\xi \in \mathcal{H}^{\alpha(M)}\cap \mathcal{H}^{\alpha(P)}=\mathcal{H}^{\alpha(\langle M,P\rangle)}.
\]
Thus, if we set \textit{new} $H_1$ and $H_2$ as 
\[
H_1:=\langle M,P\rangle =\left(\begin{array}{cc}
\mathrm{E}({n-1},R) & R^{n-1} \\
0 & 1 
\end{array}
\right), \textrm{ and }H_2:=\langle L,P\rangle =\left(\begin{array}{cc}
\mathrm{E}({n-1},R) & 0 \\
{}^t(R^{n-1}) & 1 
\end{array}
\right),
\]
then $\xi \in \mathcal{H}^{\alpha(H_1)}$ and $\eta\in \mathcal{H}^{\alpha(H_2)}$. This is the first move in our \textit{self-improvement argument}. This specific move is observed by Shalom \cite[4.III]{shalom2006}.

\textit{Our second move} employs $w=\left(\begin{array}{ccc}
0 & 0 & 1\\
0 & I_{n-2} & 0 \\
-1 & 0 & 0
\end{array}
\right)$ (note that $w=$$e_{1,n}(1)e_{n,1}(-1)e_{1,n}(1)$$\in G$). Then, thanks to the first enlargement, a \textit{miracle} happens on these new $H_1$ and $H_2$ as follows:
\begin{align*}
 wH_2w^{-1}=\left(\begin{array}{cc}
1 & {}^t(R^{n-1}) \\
0 & \mathrm{E}({n-1},R) 
\end{array}
\right) \geqslant M,\ \mathrm{and} \quad 
wH_1w^{-1}=\left(\begin{array}{cc}
1 & 0 \\
R^{n-1} &  \mathrm{E}({n-1},R)
\end{array}
\right) \geqslant L.
\end{align*}
\textit{It follows that $\alpha(w)\cdot \eta\in \mathcal{H}^{\alpha(M)}$ and $\alpha(w)\cdot \xi\in \mathcal{H}^{\alpha(L)}$}. Again, by isometry of $\alpha$, this time \textit{$(\alpha(w)\cdot\eta,\alpha(w)\cdot\xi)$ is another realizer}. By uniqueness, $\eta=\alpha(w)\cdot \xi$. Recall that $\eta \in \mathcal{H}^{\alpha(H_2)}$. Hence,
\[
\xi \in \mathcal{H}^{\alpha(H_1)}\cap \mathcal{H}^{\alpha(w^{-1}H_2w)}=\mathcal{H}^{\alpha(\langle H_1,w^{-1}H_2w\rangle)}.
\]
Note that $\langle H_1,w^{-1}H_2w\rangle= \left\langle\left(\begin{array}{cc}
\mathrm{E}({n-1},R) & R^{n-1} \\
0 & 1 
\end{array}
\right),\left(\begin{array}{cc}
1 & {}^t(R^{n-1}) \\
0 & \mathrm{E}(n-1,R) 
\end{array}
\right) \right\rangle$ equals $G$ by $(\ast)$. Therefore, $\xi\in \mathcal{H}^{\alpha(G)}$, as desired. Similarly, $\eta\in \mathcal{H}^{\alpha(G)}$.
\end{proof}
In this way, under Assumption~\ref{assumption=unique}, we overcome \textit{the difficulty in showing $\mathcal{H}^{\alpha(M)}\cap \mathcal{H}^{\alpha(L)}\ne \emptyset$}. This solution does \textit{not} use switching to bounded orbit properties, and thus, we remove Bounded Generation hypotheses.

There are, needless to say, \textit{gaps} in both of \textit{existence} and \textit{uniqueness} in Assumption~\ref{assumption=unique}. One on existence will be fixed by taking (pointed) \textit{metric ultraproducts} with \textit{rescaling}, see Section~\ref{section=up} for certain actions. One on uniqueness may be fixed once we know the existence, in the following \textit{pseudo}-manner. Recall that every Hilbert space $\mathcal{H}$ is \textit{strictly convex}, that means, for all $\xi, \eta \in \mathcal{H}$ with $\|\xi\|=\|\eta\|=1$, $\xi\ne \eta$ implies ($\|\xi+\eta\|/2)<1$. Recall, in addition, that for $\alpha\colon G\curvearrowright \mathcal{H}$, after regarding $\mathcal{H}$ as a real Hilbert space, $\alpha$ turned out to be affine (the Mazur--Ulam theorem. In this case, it follows immediately by strict convexity of $\mathcal{H}$). Hence, $\alpha$ is decomposed into the \textit{linear part} $\pi$ and \textit{cocycle part} $b$, that means, for all $g\in G$ and for all $\zeta\in \mathcal{H}$, $\alpha(g)\cdot \zeta =\pi(g)\zeta+b(g)$. Here $\pi$ is a (real) unitary representation, and $b\colon G\to \mathcal{H}$ satisfies $b(\gamma_1\gamma_2)=b(\gamma_1)+\pi(\gamma_1)b(\gamma_2)$ (\textit{cocycle identity}). 

\begin{lemma}[``Pseudo-Uniqueness of realizers'']\label{lemma=pseudounique}
Let $G,M,L$ and $(\alpha,\mathcal{H})$ be as in the paragraph above Assumption~$\ref{assumption=unique}$. Let $\pi$ be the linear part of $\alpha$. Assume that $(\xi,\eta)$ and $(\xi',\eta')$ are realizers of $D:=\mathrm{dist}(\mathcal{H}^{\alpha(M)},\mathcal{H}^{\alpha(L)})$. Then, $\xi-\xi'=\eta-\eta'\in \mathcal{H}^{\pi(G)}$.
\end{lemma}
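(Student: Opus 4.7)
The plan is to combine three ingredients: affineness of the fixed-point sets together with the Hilbert-space parallelogram identity (to get the equality $\xi - \xi' = \eta - \eta'$); the cocycle decomposition $\alpha(g) = \pi(g) + b(g)$ (to promote this equality to $\pi(M \cup L)$-invariance); and the commutator relation $(\ast)$ (to upgrade $\pi(M\cup L)$-invariance to $\pi(G)$-invariance).

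First, since $\alpha$ is affine (Mazur--Ulam, as noted in the paper), both $\mathcal{H}^{\alpha(M)}$ and $\mathcal{H}^{\alpha(L)}$ are closed affine subspaces, so the midpoints $(\xi+\xi')/2$ and $(\eta+\eta')/2$ again lie in them. Their distance is therefore at least $D$. Set $a := \xi-\eta$ and $a' := \xi'-\eta'$; both have norm $D$. Since $(\xi+\xi')/2-(\eta+\eta')/2 = (a+a')/2$, the parallelogram identity
\[
\Bigl\|\tfrac{a+a'}{2}\Bigr\|^{2} + \Bigl\|\tfrac{a-a'}{2}\Bigr\|^{2} = \tfrac{\|a\|^{2}+\|a'\|^{2}}{2} = D^{2},
\]
combined with $\|(a+a')/2\|\geq D$, forces $\|(a-a')/2\|=0$, i.e.\ $\xi-\xi' = \eta-\eta' =: v$.

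Next, for any $m\in M$, the identities $\alpha(m)\xi = \xi$ and $\alpha(m)\xi' = \xi'$, together with the fact that $\alpha(m)x-\alpha(m)y = \pi(m)(x-y)$ (the cocycle $b(m)$ cancels), give
\[
v = \xi-\xi' = \alpha(m)\xi - \alpha(m)\xi' = \pi(m)(\xi-\xi') = \pi(m)\,v.
\]
So $v\in\mathcal{H}^{\pi(M)}$. Symmetrically, using $v = \eta-\eta'$, $v\in\mathcal{H}^{\pi(L)}$. Since $\pi$ is a group homomorphism, $v$ is then fixed by all of $\pi(\langle M\cup L\rangle)$. By $(\ast)$ we have $e_{i,j}(r) = [e_{i,n}(r),\,e_{n,j}(1)]\in\langle M\cup L\rangle$ for all distinct $i,j\in[n-1]$, so $\langle M\cup L\rangle = G$, and hence $v\in\mathcal{H}^{\pi(G)}$.

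The main conceptual step is the midpoint/parallelogram argument: the sharp Hilbert identity lets one upgrade mere distance minimization to the \emph{exact} equality $\xi-\eta = \xi'-\eta'$ of \emph{directed} vectors, not just their norms. I expect this is the step that will require the most care when the class $\mathcal{X}$ is enlarged to uniformly convex Banach spaces or to BNPC spaces, where only a quantitative ``pseudo-" substitute will be available --- plausibly accounting for much of the technical labor underlying the main theorems.
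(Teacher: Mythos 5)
Your proof is correct and follows the same structure as the paper's: pass to midpoints (valid since $\alpha$ is affine), observe they are again a realizer, conclude $\xi-\eta=\xi'-\eta'$, then transfer to $\pi(G)$-invariance via cocycle cancellation and $\langle M,L\rangle=G$ from $(\ast)$. The one substantive difference is in the middle step: you invoke the \emph{parallelogram identity}, which is specific to Hilbert spaces, whereas the paper invokes \emph{strict convexity} of $\mathcal{H}$ (knowing $\|a\|=\|a'\|=D$ and $\|\tfrac{a+a'}{2}\|=D$ forces $a=a'$). Both are correct here, but the paper's choice is deliberate: strict convexity is exactly the hypothesis that survives the generalization to uniformly convex Banach spaces (Proposition~\ref{proposition=key}.(3)) and, suitably reformulated as parallelism under $(\mathrm{NPC}1)$, to BNPC spaces (Lemma~\ref{lemma=parallel1}). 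Your closing remark slightly misplaces where the difficulty lies: the ``exact equality of directed vectors'' step is \emph{not} tied to the parallelogram identity at all; strict convexity already delivers it in any strictly convex Banach space, and this is precisely what the paper's phrasing is signaling. The genuinely new technical labor comes in the non-linear BNPC setting, where ``$y-x$'' has no meaning and must be replaced by the parallel equivalence class of $[x,y]$, and in securing the transitivity $(\mathrm{TP})$ and compatibility $(\mathrm{NPC}2)$, $(\mathrm{NPC}3)$ needed to make that replacement work. Your spelling out of the cocycle cancellation $\alpha(m)x-\alpha(m)y=\pi(m)(x-y)$ is a helpful detail that the paper leaves implicit.
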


\begin{proof}
This is the \textit{parallelogram argument by Shalom} \cite[4.III.6]{shalom2006}. Let $m_1:=(\xi+\xi')/2$ and $m_2:=(\eta+\eta')/2$. Then, we claim that $(m_1,m_2)$ is again a realizer of $D$. Indeed, $m_1\in \mathcal{H}^{\alpha(M)}$ and $m_2\in \mathcal{H}^{\alpha(L)}$ because $\alpha$ is affine. By triangle inequality, $\|m_1-m_2\|\leq (D+D)/2=D$. By minimality of $D$, $\|m_1-m_2\|$, in fact, equals $D$. 

Therefore, the strict convexity of $\mathcal{H}$ implies that $\xi-\eta=\xi'-\eta'$ (otherwise, $\|m_1-m_2\|<D$). Hence, $\xi-\xi'=\eta-\eta'$. The containment in the assertion holds because $\xi-\xi'\in \mathcal{H}^{\pi(M)}$, $\eta-\eta'\in \mathcal{H}^{\pi(L)}$, and $\langle M,L\rangle=G$ (by $(\ast)$).
\end{proof}

In this paper, we observe that this argument may be generalized even to \textit{non-linear} case. Our idea is to \textit{replace ``$y-x$'' with the parallel equivalence class of $[x,y]$}'', see Section~\ref{section=parallel}. Here, we employ the concept of \textit{parallelism} on oriented geodesic segments, which  was studied by Gelander--Karlsson--Margulis \cite{GKM}.

\begin{definition}[Parallelism on geodesic segments; and assumption $(\mathrm{TP})$]\label{definition=parallelism}
Let $(X,d)$ be a uniquely geodesic space. For $x,y\in X$, denote by $[x,y]$ the  (oriented) geodesic segment from $x$ to $y$. For $0\leq t\leq 1$, $m_t[x,y]$ means the point on $[x,y]$ that divides $[x,y]$ internally at ratio $t:(1-t)$.
\begin{enumerate}[$(1)$]
  \item We say $[x,y]\,||\,[x',y']$ if $d(x,x')=d(y,y')=d(m_{1/2}[x,y],m_{1/2}[x',y'])$ holds.
  \item We say that $X$ satisfies the \textit{assumption $(\mathrm{TP})$} (``\textit{transitive law of paralellisms}'') if $[x,y]\,||\,[x',y']$ and $[x',y']\,||\,[x'',y'']$ imply $[x,y]\,||\,[x'',y'']$. We say that $\mathcal{X}$ satisfies $(\mathrm{TP})$ if every $X\in \mathcal{X}$ fulfills $(\mathrm{TP})$.
\end{enumerate}
\end{definition}

\textit{Under $(\mathrm{TP})$, the binary relation ``$||$'' is an equivalence relation}.

\noindent
\textbf{The organization of this paper.} In Section~\ref{section=pre}, we explain history and motivations on synthesis in fixed point properties, and state our main theorems and corollaries. In Section~\ref{section=hypotheses}, we state six assumptions  on $\mathcal{X}$, and present the definition of $(\mathrm{Game})$ and $(\mathrm{Game}^{+})$ in our main theorems. In Section~\ref{section=parallel}, we exhibit the rigorous form of our \textit{self-improvement argument} for type $(\mathrm{I})$ moves (Proposition~\ref{proposition=key}). In Section~\ref{section=up}, we recall well-known arguments to obtain a (uniform) action with a realizer of the distance by employing (pointed) metric ultraproducts. Section~\ref{section=proof} is devoted to the proofs of Theorems~\ref{mtheorem=main1} and \ref{mtheorem=main1b}. In Section~\ref{section=general}, we verify two generalizations of our main theorems: one for more than two subgroups (Theorem~\ref{mtheorem=main2}), and the other for superreflexive Banach spaces (Theorem~\ref{mtheorem=sr}). In Section~\ref{section=application}, we prove some applications of superintrinsic synthesis, more precisely, Corollaries~\ref{corollary=st}, \ref{corollary=expanders}, and \ref{corollary=superexpanders}.

\section{Motivations and main results}\label{section=pre}
\subsection{Motivations: comparison with extrinsic synthesis}\label{subsection=motivations}
We sketch the history and motivations to clarify  why we should study \textit{superintrinsic} synthesis, in comparison to intrinsic ones with Bounded Generation, and to \textit{extrinsic} synthesis. The bottle-neck of previous intrinsic syntheses \cite{shalom1999}, \cite{shalom2006} \textit{was} that Bounded Generation hypotheses are too strong. For instance, it is known that the assertion of Theorem~\ref{theorem=CK} is \textit{false} for $R=\mathbb{C}[t]$ (van der Kallen). For general rings such as $R=\mathbb{Z}[t]$, $\mathbb{F}_q\langle s,t \rangle$ (here $\langle \rangle$ means the non-commutative polynomial ring), the situation is open, and might be expected to be negative. Hence, despite Theorem~\ref{theorem=relT}, intrinsic synthesis \textit{was} unable to synthesize them into property $(\mathrm{F}_{\mathcal{H}\mathrm{ilbert}})$ for $\mathrm{E}(n,R)$, unless $R=\mathbb{Z}$ (\cite{shalom1999}), $R=\mathbb{F}_p[t]$ (\cite{nica}) for prime $p$, and relatives; or $R$ is commutative (\cite{shalom2006} by combination with Vaserstein's Bounded Generation \cite{vaserstein}). 

To attack the case of general non-commutative $R$, Ershov and Jaikin-Zapirain \cite{EJ} switched from intrinsic to \textit{extrinsic} synthesis. Their work is inspired by Dymara--Januszkiewicz \cite{DJ}. To be more precise on the word ``extrinsic'', they considered \textit{angles} between fixed point subspaces by subgroups. They proved that if these angles are \textit{sufficiently close to orthogonal} (they made quantitative comparison with absolute thresholds), then synthesis works. See \cite[Subsection~1.2 and Section~2]{lavy} as well as \cite{EJ} and  \cite{kassabovex} for precise statements. As a byproduct, they established Theorem~\ref{theorem=EJ} with explicit estimates of Kazhdan constants.

This extrinsic synthesis is named \textit{almost orthogonal} argument. It has been developed by Kassabov \cite{kassabovex} and  Ershov--Jaikin-Zapirain--Kassabov \cite{EJK} for property $(\mathrm{T})$; Oppenheim \cite{oppenheim} with respect to a wide class $\mathcal{X}$ of Banach spaces; and Lavy \cite{lavy} with respect to Hadamard manifolds. These extrinsic methods work powerfully for the case that $R$ is an algebra over $\mathbb{F}_p$ for sufficiently large prime $p$, and that $\mathcal{X}$ is of ``\textit{bounded wildness}''. For instance, $L_q$-spaces for $q$ in a \textit{bounded} range, or for Banach spaces of \textit{bounded} range,  away from $1$, of type and cotype  (see a book \cite{BL} for the definitions) in \cite{oppenheim}. 

The reason to develop \textit{intrinsic} synthesis, apart from glory of \textit{extrinsic} ones, is the following. In several cases, there is need to study property $(\mathrm{F}_{\mathcal{X}})$ with respect to $\mathcal{X}$ of ``\textit{unbounded wildness}''. It may be \textit{intrinsic} synthesis that will shed light on this study. Here, we exhibit three examples.

\noindent
\textbf{(1) Analytic obstruction to relative Gromov-hyperbolicity.} For each $q\in [1,\infty)$, let $\mathcal{B}_{L_q}$ be the class of all $L_q$-spaces (the underlying measure spaces vary). More generally, let $\mathcal{B}_{\mathrm{NC}L_q}$ be the class of all non-commutative $L_q$-spaces (the underlying von Neumann algebras vary, see \cite{PX} for the definition). Then, $\mathcal{B}_{L_q}\subseteq \mathcal{B}_{\mathrm{NC}L_q}$. Let $\mathcal{B}_{\mathrm{Q}L_q}$ be the class of all quotient spaces of $L_q$-spaces.
\begin{theorem}[Bourdon--Pajot \cite{BP}; Section~1.5, Corollary in \cite{gerasimov} and Theorem~1.3 in \cite{puls} for relatively hyperbolic groups; and Alvarez--Lafforgue \cite{AL}]\label{theorem=relhyp}

Let $G$ be an infinite group with property $(\mathrm{F}_{\mathcal{B}_{L_q}})$ for $\mathrm{all}$ $q\in [2,\infty)$. Then, $G$ is $\mathrm{never}$ hyperbolic. Moreover, $G$ is $\mathrm{never}$ relatively hyperbolic $($with respect to finite collection of infinite subgroups$)$. 

Let $G$ be an infinite group with property $(\mathrm{F}_{\mathcal{B}_{\mathrm{Q}L_{q_n}}})$ for some $(q_n)_n$ with $\lim_{n\to \infty}q_n=1$. Then, $G$ is $\mathrm{never}$ hyperbolic.
\end{theorem}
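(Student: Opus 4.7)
My plan is to prove the contrapositive in each of the three assertions: given an infinite (relatively) hyperbolic group $G$, I will exhibit a single $q$ in the relevant range and an affine isometric $G$-action on a space in the relevant class that admits an unbounded orbit, hence no global fixed point. The elementary hyperbolic case, in which $G$ is virtually $\mathbb{Z}$, is disposed of at once by the standard proper affine isometric action on $\mathbb{R}$ (a translation on the $\mathbb{Z}$-subgroup, extended by a reflection if $G$ is infinite dihedral); since $\mathbb{R}$ lies in $\mathcal{B}_{L_q}$ for every $q\geq 2$, property $(\mathrm{F}_{\mathcal{B}_{L_q}})$ already fails at $q=2$. It remains to treat non-elementary hyperbolic and relatively hyperbolic $G$.

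For non-elementary hyperbolic $G$, I would invoke the Bourdon--Pajot boundary construction \cite{BP}. Equip the Gromov boundary $\partial G$ with a visual quasi-metric $d$ and an Ahlfors $Q$-regular measure $\mu$; then the natural quasi-M\"obius action of $G$ is isometric on a Besov-type closed subspace of $L_q(\partial G\times \partial G,\, d\mu\otimes d\mu / d(\xi,\eta)^{2Q})$, and differentiating a suitable boundary function along a loxodromic axis yields a $1$-cocycle $b\colon G\to L_q$ that is unbounded as soon as $q$ exceeds the Ahlfors-regular conformal dimension of $\partial G$. Choosing any such $q$ in $[2,\infty)$ produces a fixed-point-free affine isometric $G$-action on an $L_q$-space. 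For the relatively hyperbolic extension I would follow Gerasimov and \cite{puls}, replacing $\partial G$ by the Bowditch boundary of a cusp-uniform hyperbolic model; the resulting cocycle need not be proper on the peripheral subgroups, but infiniteness of $G$ combined with unboundedness of the cocycle on a non-peripheral hyperbolic element still forbids a global fixed point. For the quotient-$L_q$ statement with $q_n\to 1$, I would appeal to \cite{AL}: dualising the boundary Besov construction and passing to an appropriate quotient yields a sequence of unbounded $1$-cocycles valued in spaces from $\mathcal{B}_{\mathrm{Q}L_{q_n}}$.

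The main obstacle is the quantitative boundary analysis. For the Bourdon--Pajot step one needs H\"older-type estimates on the cross-ratio together with Coornaert's Ahlfors-regularity of the visual measure in order to certify that the $L_q$-norm of the coboundary blows up along any loxodromic orbit; in the relatively hyperbolic case the horoball contributions must be controlled so that the cocycle remains in the target $L_q$-space; and the Alvarez--Lafforgue construction requires a delicate renormalisation as $q\to 1$ to keep the cocycle nonzero while the ambient spectral gap degenerates. Once these analytic inputs are in hand, the reduction from an unbounded $1$-cocycle to the failure of property $(\mathrm{F}_{\mathcal{X}})$ is formal: on an infinite group, an unbounded orbit is incompatible with any fixed point of an isometric action.
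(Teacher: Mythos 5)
This theorem is imported by citation in the paper --- no proof is given there, the result being attributed to Bourdon--Pajot, Gerasimov, Puls, and Alvarez--Lafforgue --- so there is nothing internal to compare your argument against. Your sketch correctly identifies the machinery from exactly those sources (the elementary case via translations on $\mathbb{R}$, boundary Besov cocycles above the conformal dimension of $\partial G$ for non-elementary hyperbolic $G$, the Bowditch boundary for the relatively hyperbolic case, and the Alvarez--Lafforgue construction as $q\to 1$), so it matches the paper's intended route at the level of a plan, with the substantial analytic content left, as you acknowledge, as black boxes from the cited references.
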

For this application, there is a \textit{major distinction} between ``property $(\mathrm{F}_{\mathcal{B}_{L_q}})$ for \textit{bounded} $q$'' and ``that for \textit{all} $q\in [2,\infty)$''. Also, ``property $(\mathrm{F}_{\mathcal{B}_{\mathrm{Q}L_{q_n}}})$ for some $(q_n)_n$ with $\lim_{n\to \infty}q_n=1$'' has special interest.

\noindent
\textbf{(2) Super-expanders.}
\begin{definition}\label{definition=mothergroup}
Let $(G,S)$ be a pair of a group and its finite generating set. This is said to be a \textit{mother group} of a sequence $((G_n,S_n))_{n\in \mathbb{N}}$ if $|G_n|<\infty$ with $\lim_{n\to \infty}|G_n|\to \infty$, and if there exists a sequence of group homomorphisms $(\phi_n)_n$ such that $\phi_n$ maps $G$ onto $G_n$ with $\phi_n(S)=S_n$. We say that $G$ is a \textit{mother group} of $(G_n)_n$ when we do not indicate $S$ and $(S_n)_n$.
\end{definition}

Fix $\mathcal{X}$. Let $(\Gamma_n)_{n\in\mathbb{N}}$ be a sequence of finite graphs with uniformly bounded degree and $\lim_{n\to \infty} |V(\Gamma_n) |\to \infty$. We call it (a sequence of) \textit{$\mathcal{X}$-panders} if for every $X\in \mathcal{X}$, $\inf_{n}\lambda(\Gamma_n,X)>0$, where  $\lambda (\Gamma,X)$ for finite $\Gamma=(V(\Gamma),E(\Gamma))$ is defined by
\[
\lambda (\Gamma,X)=\frac{1}{2}\inf_{f\colon V\to X} \frac{|V(\Gamma)|^2 \sum_{e\in E(\Gamma)}d(f(e^{+}),f(e^{-}))^2}{|E(\Gamma)|\sum_{(v,w)\in V(\Gamma)^2}d(f(v),f(w))^2}.
\]
Here $f$ runs over all non-constant maps, and $E(\Gamma)(\ni e=(e^{-},e^{+}))$ is the set of (oriented) edges.  This condition states that a \textit{Poincar\'{e}-type inequality holds for every $\Gamma_n$ with a common constant}. Ordinary expanders are $\mathbb{R}$-panders, which is equivalent to $\mathcal{B}_{L_q}$-panders for each $q\in [1,\infty)$ (see a book \cite{ostrovskii} for expanders). One of main importance of $\mathcal{X}$-panders is their \textit{lack} of ``\textit{$($uniform$)$ coarse embeddings} into $X$'' for every $X\in \mathcal{X}$ (see \cite[Definition~1.46]{ostrovskii}).

\begin{definition}[Super-expanders]\label{definition=superexpanders}
Define the class $\mathcal{B}_{\mathrm{uc}}$ be the class of all \textit{uniformly convex} Banach spcaces. Here, a Banach space $E$ is said to be uniformly convex if there exists $\delta=\delta_E \colon (0,2]\to \mathbb{R}_{>0}$ such that for all $\xi\neq \eta $ with $\|\xi\|=\|\eta\|=1$, $1 -(\|\xi+\eta\|/2)\geq \delta(\|\xi-\eta\|)$ holds. \textit{Super-expanders} are defined to be $\mathcal{B}_{\mathrm{uc}}$-panders.
\end{definition}

A direct generalization of Margulis's argument (\cite[Proposition~5.24]{ostrovskii}), \cite[Theorem~1.3.(1)]{BFGM} and \cite[Corollary 4.10]{cheng} show the following:

\begin{theorem}\label{theorem=superexpanders}
Let $\mathcal{E}$ be a class of Banach spaces. Assume there exists $q\in [1,\infty)$ such that for every $E\in \mathcal{E}$, $\ell_q(\mathbb{N},E)\in \mathcal{E}$. Assume $((G_n,S_n))_{n\in \mathbb{N}}$ has a mother group with property $(\mathrm{F}_{\mathcal{E}})$. Then, the sequence of Cayley graphs $(\mathrm{Cay}(G_n,S_n))_n$ forms $\mathcal{E}$-panders. Here the $\mathrm{Cayley\ graph}$ of $(H,T)$ is defined as $V=H$ and $E=\{(h,ht):h\in H,t\in T\}$. In particular, if $((G_n,S_n))_{n\in \mathbb{N}}$ has a mother group with property $(\mathrm{F}_{\mathcal{B}_{\mathrm{uc}}})$, then $(\mathrm{Cay}(G_n,S_n))_n$ forms super-expanders.
\end{theorem}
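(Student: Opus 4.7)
The plan is a Margulis-type contradiction argument. Suppose that $(\mathrm{Cay}(G_n,S_n))_n$ fails to form $\mathcal{E}$-panders. Then there exist $E\in\mathcal{E}$, a subsequence (which I re-index by $n$), and non-constant maps $f_n\colon V(\Gamma_n)\to E$ whose normalized Poincar\'{e} ratios tend to zero. After rescaling and translating $f_n$ in $E$ I may assume that the pairwise variance $\tfrac{1}{|V_n|^2}\sum_{(v,w)}\|f_n(v)-f_n(w)\|^2$ equals $1$ and the mean edge energy
\[
\epsilon_n:=\tfrac{1}{|E_n|}\sum_{e\in E(\Gamma_n)}\|f_n(e^+)-f_n(e^-)\|^2
\]
tends to $0$; a truncation in $E$ makes $\|f_n\|_\infty$ uniformly bounded without losing the variance/edge-energy imbalance.

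The central object is $\mathcal{F}:=\ell_q(\mathbb{N},E)$, which lies in $\mathcal{E}$ by hypothesis and, under any bijection $\mathbb{N}\cong\bigsqcup_n G_n$, is canonically isometric to the $\ell_q$-direct sum $\bigoplus_n\ell_q(G_n,E)$. Pulling the left shift on $\ell_q(G_n,E)$ back through $\phi_n\colon G\to G_n$ gives an isometric representation $\pi_n$ of $G$, and the diagonal $\pi:=\bigoplus_n\pi_n$ is an isometric representation of $G$ on $\mathcal{F}$ whose fixed subspace consists of summable sequences of constant functions. For each $N$, let $\xi_N:=|G_N|^{-1/q}f_N$, regarded as an element of $\mathcal{F}$ supported in the $N$-th layer. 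A direct calculation combined with Jensen's inequality (applied differently according to whether $q\geq 2$ or $q<2$) gives
\[
\max_{s\in S}\|\pi(s)\xi_N-\xi_N\|_{\mathcal{F}}^q\;\leq\;C_{q,E}\cdot\epsilon_N^{\min(1,q/2)}\;\longrightarrow\;0,
\]
while a symmetrization estimate exploiting the normalized pairwise variance of $f_N$ yields a uniform lower bound $\mathrm{dist}(\xi_N,\mathcal{F}^{\pi(G)})\geq c_0>0$. Thus $\pi$ admits arbitrarily small Kazhdan ratios on the complement of its fixed vectors.

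To close the argument, I would invoke the Banach analogue of the Delorme--Guichardet equivalence developed in \cite{BFGM}: property $(\mathrm{F}_{\mathcal{E}})$ for the mother group $G$, together with the hypothesis that $\mathcal{E}$ is stable under $\ell_q(\mathbb{N},\cdot)$, forces a uniform spectral gap for every isometric representation of $G$ on each member of $\mathcal{E}$. Applied to $\pi$ on $\mathcal{F}\in\mathcal{E}$, this contradicts the previous paragraph. The ``in particular'' clause is immediate: Clarkson's inequalities guarantee $\ell_q(\mathbb{N},E)\in\mathcal{B}_{\mathrm{uc}}$ whenever $E\in\mathcal{B}_{\mathrm{uc}}$ and $q\in(1,\infty)$, so the hypothesis of the general statement is automatic for $\mathcal{E}=\mathcal{B}_{\mathrm{uc}}$. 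The main obstacle is precisely the ``fixed point $\Rightarrow$ spectral gap'' implication in the Banach setting, which is where the $\ell_q$-closure assumption becomes essential: it is exactly the stability needed to assemble the direct sum of ``almost-fixed vectors'' $(\xi_N)_N$ into a single cocycle witnessing failure of the fixed point property, thus permitting the contradiction to be drawn within the class $\mathcal{E}$ itself.
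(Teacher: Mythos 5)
Your proposal reconstructs essentially the same Margulis-type argument that the paper attributes to \cite[Proposition~5.24]{ostrovskii}, \cite[Theorem~1.3.(1)]{BFGM} and \cite[Corollary~4.10]{cheng}; the paper does not prove Theorem~\ref{theorem=superexpanders} itself but cites these sources, and your sketch fills in precisely the combination they supply: build a single linear isometric $G$-representation $\pi=\bigoplus_n\pi_n$ on $\mathcal{F}=\ell_q(\mathbb{N},E)\cong\bigoplus_n^{\ell_q}\ell_q(G_n,E)\in\mathcal{E}$, embed the failing Poincar\'e maps $f_N$ as almost-invariant vectors $\xi_N$ far from $\mathcal{F}^{\pi(G)}$, and invoke the Banach $(\mathrm{F}_{\mathcal{E}})\Rightarrow(\mathrm{T}_{\mathcal{E}})$ implication (the open mapping theorem argument of \cite{BFGM} recalled in Subsection~\ref{subsection=relT}) to reach the contradiction. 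Two places are more delicate than you let on, though neither is a structural gap. First, the paper's Poincar\'e ratio $\lambda(\Gamma,X)$ is fixed with exponent~$2$ while your spectral estimate lives in $\ell_q$; the Jensen step gives the displacement bound only when $q\le 2$, and the reverse regime $q>2$ genuinely needs the truncation you mention. The radial retraction onto a ball is $2$-Lipschitz in any normed space, and a Chebyshev argument keeps the truncated variance bounded below, but ``without losing the variance/edge-energy imbalance'' deserves these two lines. Second, the lower bound on $\mathrm{dist}(\xi_N,\mathcal{F}^{\pi(G)})$ compares an $\ell_2$-type normalization with an $\ell_q$ distance; for $q\ge2$ power-mean monotonicity on the (normalized) finite counting space gives it directly, and for $q<2$ it again relies on the uniform $L^\infty$ bound from the truncation. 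With those two clarifications your proof is correct and matches the paper's intended argument.
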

V. Lafforgue \cite{lafforgue} showed a lattice in $\mathrm{SL}(3,\mathbb{F})$, where $\mathbb{F}$ is a non-archimedean local field, has property $(\mathrm{F}_{\mathcal{B}_{\mathrm{uc}}})$. Hence, we may construct super-expanders, for instance, from (finite group quotients of) $\mathrm{SL}(3,\mathbb{F}_p[t])$ for $p$ prime. Another source of super-expanders was given by Mendel and Naor \cite{MN} by combinatorial construction. However, to the best knowledge of the author, it might be open whether we may construct super-expanders from finite simple groups of Lie type.  

A major motivation to study property $(\mathrm{F}_{\mathcal{B}_{\mathrm{uc}}})$ for $\mathrm{E}(n,R)$ for \textit{non-commutative} $R$ is the following (it was that of \cite{EJ} in relation to ordinary expanders).

\begin{lemma}[From elementary groups to super-expanders]\label{lemma=superexpanders}
\begin{enumerate}[$(1)$]
  \item If $\mathrm{E}(4,\mathbb{F}_p\langle s,t\rangle)$ has property $(\mathrm{F}_{\mathcal{B}_{\mathrm{uc}}})$ for each prime $p$, then $(1)$ of Conjecture~$\ref{conjecture=superexpanders}$ is true.
  \item If $\mathrm{E}(4,\mathbb{Z}\langle s,t\rangle)$ has property $(\mathrm{F}_{\mathcal{B}_{\mathrm{uc}}})$, then $(2)$ of Conjecture~$\ref{conjecture=superexpanders}$ is true.
\end{enumerate}
\end{lemma}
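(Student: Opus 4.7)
The plan is to deduce both assertions from Theorem~\ref{theorem=superexpanders} applied with $\mathcal{E} = \mathcal{B}_{\mathrm{uc}}$. The hypothesis of that theorem is verified at $q = 2$: the $\ell_{2}$-sum of uniformly convex Banach spaces sharing a common modulus of convexity is again uniformly convex, so $\ell_{2}(\mathbb{N}, E) \in \mathcal{B}_{\mathrm{uc}}$ whenever $E \in \mathcal{B}_{\mathrm{uc}}$. Thus each part reduces to exhibiting, for each relevant $n$, a surjection from the candidate mother group onto the target special linear group that sends one fixed finite generating set to a generating set of the image.

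For part~$(1)$, let $G := \mathrm{E}(4, \mathbb{F}_{p}\langle s, t\rangle)$. Finite generation of $G$ follows from the commutator relation $(\ast)$ (applicable since $4 \geq 3$) together with finite generation of the coefficient ring; a concrete generating set is $S := \{e_{i,j}(1), e_{i,j}(s), e_{i,j}(t) : i \neq j \in [4]\}$. The key ring-theoretic input is that $M_{m}(\mathbb{F}_{p})$ is $2$-generated as a unital $\mathbb{F}_{p}$-algebra for every $m \geq 1$ (for instance by the cyclic permutation matrix together with the rank-one idempotent $E_{1,1}$), which yields an algebra surjection $\mathbb{F}_{p}\langle s, t\rangle \twoheadrightarrow M_{m}(\mathbb{F}_{p})$. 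Applying the functor $\mathrm{E}(4, \cdot)$ and invoking the Morita-type identification $\mathrm{E}(4, M_{m}(\mathbb{F}_{p})) = \mathrm{E}(4m, \mathbb{F}_{p}) = \mathrm{SL}(4m, \mathbb{F}_{p})$ produces surjections $G \twoheadrightarrow \mathrm{SL}(4m, \mathbb{F}_{p})$ sending $S$ to a finite generating set of the image. Hence $G$ is a mother group of the cofinal subsequence $(\mathrm{SL}(4m, \mathbb{F}_{p}))_{m \geq 1}$ inside $(\mathrm{SL}(n, \mathbb{F}_{p}))_{n \geq 4}$, and Theorem~\ref{theorem=superexpanders} together with the hypothesised property $(\mathrm{F}_{\mathcal{B}_{\mathrm{uc}}})$ of $G$ yields super-expanders.

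For part~$(2)$, take $G := \mathrm{E}(4, \mathbb{Z}\langle s, t\rangle)$, finitely generated by the same reasoning. For every $m \geq 1$, set $p := p_{4m}$ and compose the mod-$p$ reduction $\mathbb{Z}\langle s, t\rangle \twoheadrightarrow \mathbb{F}_{p}\langle s, t\rangle$ with the $2$-generated-algebra surjection onto $M_{m}(\mathbb{F}_{p})$; this yields a surjection $G \twoheadrightarrow \mathrm{SL}(4m, \mathbb{F}_{p_{4m}})$ sending a fixed generating set of $G$ onto a generating set of the image, and exhibits $G$ as a mother group of the cofinal subsequence of $(\mathrm{SL}(n, \mathbb{F}_{p_{n}}))_{n \geq 4}$ indexed by the multiples of $4$.

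The main obstacle is that the functor $\mathrm{E}(4, \cdot)$ only outputs special linear groups of rank divisible by $4$, so realising $\mathrm{SL}(n, \mathbb{F}_{p})$ as a direct quotient of $G$ for residues $n \equiv 1, 2, 3 \pmod 4$ is not automatic. I would handle this either by passing to the cofinal subsequence indexed by multiples of $4$ (which is enough to produce an infinite super-expander family within the given sequence, in the spirit of Conjecture~\ref{conjecture=superexpanders}), or, if a complete treatment of every residue class is required, by enriching the target algebras with a complete orthogonal family of idempotents $e_{1}, e_{2}, e_{3}, e_{4}$ summing to the identity with prescribed ranks $n_{1} + n_{2} + n_{3} + n_{4} = n$, so that the induced Peirce block decomposition turns the elementary generators of $\mathrm{E}(4, \cdot)$ into block-elementary matrices that generate $\mathrm{SL}(n, \mathbb{F}_{p})$ in the missing residues.
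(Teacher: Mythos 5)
Your argument for the main case $n=4m$ is essentially the paper's proof: the block-matrix identification $\mathrm{SL}(4m,\mathbb{F}_p)\simeq \mathrm{E}(4,M_m(\mathbb{F}_p))$, the observation that $M_m(\mathbb{F}_p)$ is $2$-generated as a unital $\mathbb{F}_p$-algebra (the paper uses $e_{1,2}(1)$ and a permutation matrix of order $m$; your choice of the cyclic permutation together with $E_{1,1}$ works just as well, since $T^iE_{1,1}T^{-j}$ recovers all matrix units), and the appeal to Theorem~\ref{theorem=superexpanders} at $q=2$; the mod-$p$ reduction for part~$(2)$ also matches. Your closing paragraph, however, deserves a remark. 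The paper is equally terse about the residues $n\not\equiv 0\ (\mathrm{mod}\ 4)$, saying only that a ``slight modification'' is needed, so your concern there is the paper's concern too. But your proposed Peirce-decomposition fix does not work as stated: $\mathrm{E}(4,A)$ requires a single unital ring $A$ in all $4\times 4$ blocks, while an unequal decomposition $n=n_1+n_2+n_3+n_4$ produces off-diagonal Peirce pieces $\mathrm{Hom}(\mathbb{F}_p^{n_j},\mathbb{F}_p^{n_i})$ of differing sizes that cannot all be identified with one ring; moreover, the free algebra $\mathbb{F}_p\langle s,t\rangle$ (being a domain) has no idempotents other than $0$ and $1$, so there is no family $e_1,\dots,e_4$ to push forward. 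If the gap in the remaining residues is to be filled, it would more plausibly go through mother groups $\mathrm{E}(n_0,\mathbb{F}_p\langle s,t\rangle)$ for a few small $n_0\geq 4$, or by relaxing the statement to a cofinal subfamily as you also suggest, rather than through idempotents.
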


\begin{proof}
We only prove $(1)$. We concentrate on $\mathrm{SL}(4m,\mathbb{F}_{p})$ (if $n$ is not divisible by $4$, we need slight modification). Then, by considering $m\times m$ block matrices, we have a natural isomorphism $\mathrm{SL}(4m,\mathbb{F}_{p}) \simeq \mathrm{E}(4,\mathrm{Mat}_{m\times m}(\mathbb{F}_p))$. Observe that, for every $m$, the ring $\mathrm{Mat}_{m\times m}(\mathbb{F}_p)$ is generated by the ring unit $I_m$, $S_m:=e_{1,2}(1)$, and $T_m$, where $T_m$ is a permutation matrix of order $m$. Since $pI_m=pS_m=pT_m=O_m$, $\mathrm{E}(4,\mathbb{F}_p\langle s,t\rangle)$ \textit{is a mother group of}$(\mathrm{SL}(4m,\mathbb{F}_{p}))_m$. Theorem~\ref{theorem=superexpanders} ends our proof. 
\end{proof}

\noindent
\textbf{(3) Robust Banach property $(\mathrm{T})$.} Let $\mathcal{E}$ be a class of complex Banach spaces. Oppenheim \cite[Definition~1.2]{oppenheim}, inspired by \cite{lafforgue}, defined a notion of \textit{robust Banach property $(\mathrm{T})$ with respect to $\mathcal{E}$}, hereafter we write as \textit{robust property $(\mathrm{T}_{\mathcal{E}})$}. This is stated in terms of existence of \textit{Kazhdan-type projections} (in a weak sense) in certain Banach $\ast$-algebras. In work in progress, Gomez-Aparicio, Liao, and de la Salle \cite{GLS} defined a notion of \textit{geometric robust property $(\mathrm{T}_{\mathcal{E}})$} for a sequence of finite graphs. This is inspired by \textit{geometric property $(\mathrm{T})$} of Willett and Yu \cite{WY}. 
\begin{theorem}\label{theorem=robust}
  \begin{enumerate}[$(1)$]
   \item $($Oppenheim, \cite[Proposition~1.9]{oppenheim}$)$ Assume that there exists $q\in (1,\infty)$ such that for every $E\in \mathcal{E}$, $E\oplus_{\ell_q}\mathbb{C}\in \mathcal{E}$. Then, robust property $(\mathrm{T}_{\mathcal{E}})$ implies property $(\mathrm{F}_{\mathcal{E}})$.
   \item $($de la Salle, \cite[Corollary~5.11]{delasalle}$)$ Conversely, assume that $\mathcal{E}$ consists of \textit{superreflexive} Banach spaces, and that $\mathcal{E}$ satisfies $(\mathrm{U})$ in Subsection~$\ref{subsection=assumption}$. Then, $($for finitely generated groups$)$,  property $(\mathrm{F}_{\mathcal{E}})$ implies robust property $(\mathrm{T}_{\mathcal{E}})$. Here, a Banach space $E$ is $\mathrm{superreflexive}$ if and only if $E$ is isomorphic to some uniformly convex Banach space $($\cite[Theorem~A.6]{BL}$)$. 
\item $($Gomez-Aparicio--Liao--de la Salle, \cite{GLS}$)$ Let $\mathcal{E}$ as in $(2)$. Assume, besides, that there exists $q\in (1,\infty)$ such that for every $(E_n)_{n\in \mathbb{N}}\subseteq \mathcal{E}$, $(\sum_n E_n)_{\ell_q}\in \mathcal{E}$. Let $((G_n,S_n))_{n\in \mathbb{N}}$ have a mother group $(G,S)$. Then, $(\mathrm{Cay}(G_n,S_n))_n$ has geometric robust property $(\mathrm{T}_{\mathcal{E}})$ if $G$ has  robust property $(\mathrm{T}_{\mathcal{E}})$. In addition, the converse is  true if $\bigcap_{n}\mathrm{Ker}\{G\twoheadrightarrow G_n\}=\{1_G\}$.
\end{enumerate}
\end{theorem}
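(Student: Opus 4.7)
This is a collection of three cited results, so the plan is to sketch, for each part, the strategy one would take, matching what the original sources establish.

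\textbf{Part (1).} The natural approach is a linearization-and-projection argument. Given an affine isometric action $\alpha\colon G \curvearrowright E$ with linear part $\pi$ and cocycle $b$, I would form the bounded (but generally non-isometric) linear representation $\tilde{\pi}$ on $\tilde{E} := E \oplus_{\ell_q} \mathbb{C}$ by
\[
\tilde{\pi}(g)(\xi, t) := (\pi(g)\xi + t\, b(g),\ t).
\]
The hypothesis $E \oplus_{\ell_q} \mathbb{C} \in \mathcal{E}$ ensures that a Kazhdan-type projection $p$ furnished by robust $(\mathrm{T}_\mathcal{E})$ may be evaluated on $\tilde{\pi}$. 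Applying $\tilde{\pi}(p)$ to the seed vector $(0,1) \in \tilde{E}$ produces an invariant vector $(\xi_0, 1)$, and the $\tilde{\pi}(G)$-invariance equation $\pi(g)\xi_0 + b(g) = \xi_0$ exhibits $\xi_0$ as an $\alpha(G)$-fixed point in $E$.

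\textbf{Part (2).} The converse in the superreflexive case proceeds by ultraproduct renormalization. Assume, towards a contradiction, that robust $(\mathrm{T}_\mathcal{E})$ fails. Then there exists a sequence of bounded representations $(\pi_n, E_n)$ with $E_n \in \mathcal{E}$ which admit no uniform Kazhdan projection. Under closure condition $(\mathrm{U})$ the ultraproduct $\prod_\mathcal{U} E_n$ still lies in $\mathcal{E}$, and superreflexivity (equivalently, isomorphic uniform convexity) survives passage to the ultralimit, preserving the geometric structure needed to extract an affine action with no fixed point; this contradicts property $(\mathrm{F}_\mathcal{E})$ via the standard BFGM-style almost-invariant versus invariant dichotomy.

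\textbf{Part (3).} The functoriality of the Cayley-graph construction transports the structure across the mother-group quotient. In the forward direction, a Kazhdan projection for $G$ (in a suitable completion of $\ell^1(G)$) pushes forward under each $\phi_n$ to a Kazhdan projection in $\ell^1(G_n)$, and the compatibility $\phi_n(S) = S_n$ identifies the $\phi_n$-pushforwards of the $G$-level combinatorial Laplacian with the $G_n$-level one, yielding uniform-in-$n$ geometric robust $(\mathrm{T}_\mathcal{E})$. For the converse under $\bigcap_n \ker\phi_n = \{1_G\}$, any representation of $G$ is approximated on finite subsets by representations factoring through the $G_n$; the graph-level projections assemble via a weak limit inside $(\sum_n E_n)_{\ell_q}$ (which lies in $\mathcal{E}$ by hypothesis) to give a Kazhdan projection for $G$.

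\textbf{Main obstacle.} The technically hardest step is clearly (2): turning the qualitative fixed-point property $(\mathrm{F}_\mathcal{E})$ into a quantitative Kazhdan projection requires a uniform ``spectral gap'' across all of $\mathcal{E}$, and both superreflexivity (to control geometry under ultrapowers) and $(\mathrm{U})$ (to remain inside $\mathcal{E}$) are used essentially. Parts (1) and (3) are comparatively formal once the relevant $\ell_q$-sum closure assumptions are available to host the linearized representation.
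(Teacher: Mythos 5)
The paper does not actually prove Theorem~\ref{theorem=robust}: all three items are imported verbatim from the literature (Oppenheim, de la Salle, and Gomez-Aparicio--Liao--de la Salle, respectively), so there is no in-paper argument to compare against, and your sketches must be judged as reconstructions of the cited proofs.

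Your sketch of part (1) contains a genuine error. The representation $\tilde\pi(g)(\xi,t) := (\pi(g)\xi + t\,b(g),\,t)$ on $E \oplus_{\ell_q} \mathbb{C}$ is \emph{not} uniformly bounded when $b$ is unbounded: one has $\|\tilde\pi(g)\|_{\mathrm{op}} \geq \|\tilde\pi(g)(0,1)\| \geq \|b(g)\|$, and in the reflexive setting the absence of a fixed point is equivalent to unboundedness of $b$, so exactly when the conclusion is at stake $\tilde\pi$ falls outside the class of bounded (let alone isometric) representations. You therefore cannot just ``evaluate a Kazhdan-type projection on $\tilde\pi$'': the whole point of Oppenheim's \emph{robust} property $(\mathrm{T})$, as opposed to Lafforgue's or plain Banach $(\mathrm{T})$, is that the projection lives in a Banach $*$-algebra completion large enough to tolerate the (at most linear in $\ell_S(g)$) growth of $\|b(g)\|$, and establishing that this completion applies to $\tilde\pi$ is the substance of the implication, not an afterthought. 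Calling $\tilde\pi$ ``bounded'' asserts precisely the control that needs to be earned. Parts (2) and (3) are acceptable one-paragraph paraphrases of the shape of the cited arguments, though in (3) the passage from a coherent family of $G_n$-level projections to a single Kazhdan-type projection in a completion of $\mathbb{C}[G]$ under $\bigcap_n\ker\phi_n = \{1_G\}$ is exactly where all the work lives, and ``assemble via a weak limit inside $(\sum_n E_n)_{\ell_q}$'' does not explain why the putative limit is an idempotent onto invariants rather than merely some weak cluster point.
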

Examples of $\mathcal{E}$ with assumptions of $(1)$ and $(3)$ are $\mathcal{B}_{\mathrm{NC}L_q}$ (and $\mathcal{B}_{L_q}$) for every $q\in (1,\infty)$ ($(\mathrm{U})$ is due to Raynaud (and Heinrich)). 

\subsection{Main results and applications}\label{subsection=main}
As we mentioned in Subsection~\ref{subsection=heart},  we impose six assumptions on $\mathcal{X}$, which includes $(\mathrm{TP})$; and three hypotheses on group structures. We will see them in Section~\ref{section=hypotheses}.

\begin{mtheorem}[\textit{Superintrinsic synthesis}, two-subgroup case]\label{mtheorem=main1}
Assume that a class $\mathcal{X}$ of complete metric spaces satisfies   six assumptions $(\mathrm{S}_{+})$, $(\mathrm{U})$, $(\mathrm{NPC}1)$--$(\mathrm{NPC}3)$, and $(\mathrm{TP})$, which are defined in Subsection~$\ref{subsection=assumption}$. Let $M,L\leqslant G$, and $\Pi \leqslant \mathrm{Aut}(G)$. Assume that the following three hypotheses on $(G,M,L,\Pi)$ are fulfilled:

\noindent
\underline{$\mathrm{Hypotheses}$:}
\begin{enumerate}
 \item[$(i)$] the union $M\cup L$ generates $G$;
 \item[$(ii)$] the subgroup $\Pi$ is finite; and 
 \item[$(\mathrm{GAME})$] the player has a winning strategy for the $(\mathrm{Game})$ for $(M,L,\Pi)$, which is described in Subsection~$\ref{subsection=hypothesis}$.
\end{enumerate}

Then,  relative properties $(\mathrm{F}_{\mathcal{X}})$ for $G\geqslant M$ and $G\geqslant L$ imply property $(\mathrm{F}_{\mathcal{X}})$ for $G$.
\end{mtheorem}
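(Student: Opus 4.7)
The plan is to run the self-improvement argument from Subsection~\ref{subsection=heart} in a sufficiently rigid space produced by a rescaled metric ultraproduct. Given an action $\alpha\colon G\curvearrowright X$ with $X\in\mathcal{X}$, relative property $(\mathrm{F}_{\mathcal{X}})$ yields $X^{\alpha(M)}\neq\emptyset$ and $X^{\alpha(L)}\neq\emptyset$. If $X^{\alpha(G)}\neq\emptyset$ we are done, so suppose for contradiction that it is empty. Then by hypothesis $(i)$ the two fixed-point sets are disjoint, and $D:=\mathrm{dist}(X^{\alpha(M)},X^{\alpha(L)})\geq 0$ need not be realized in $X$. Following the strategy announced in Section~\ref{section=up}, I would form a pointed metric ultraproduct $\tilde{X}$ with base-point sequences chosen inside $X^{\alpha(M)}$ and $X^{\alpha(L)}$ that almost attain $D$, rescaling the metric so that on $\tilde{X}$ the induced isometric action $\tilde{\alpha}\colon G\curvearrowright\tilde{X}$ admits an actual realizer $(\xi,\eta)\in\tilde{X}^{\tilde{\alpha}(M)}\times\tilde{X}^{\tilde{\alpha}(L)}$ of the (renormalized) distance $\tilde{D}$. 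The axioms $(\mathrm{S}_{+})$, $(\mathrm{U})$, and $(\mathrm{NPC}1)$--$(\mathrm{NPC}3)$ are set up precisely so that $\tilde{X}$ (or a suitable convex subspace) remains in the class $\mathcal{X}$, is uniquely geodesic and Busemann nonpositively curved, and still satisfies $(\mathrm{TP})$.

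Next, I would install a nonlinear replacement for the Uniqueness part of Assumption~\ref{assumption=unique}. Instead of demanding a single realizer, I would prove a parallel analogue of Shalom's parallelogram lemma (Lemma~\ref{lemma=pseudounique}): if $(\xi',\eta')$ is another realizer of $\tilde{D}$, then the BNPC midpoint construction produces yet another realizer $(m_1,m_2)$, and minimality forces the oriented geodesic segments $[\xi,\eta]$ and $[\xi',\eta']$ to be parallel in the sense of Definition~\ref{definition=parallelism}. Thanks to $(\mathrm{TP})$, the relation ``$||$'' is an equivalence relation on oriented segments; I would then use its $||$-class as the nonlinear surrogate of the vector ``$\eta-\xi$'' used in the Hilbert proof, exactly as previewed at the end of Subsection~\ref{subsection=heart}. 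This is the output of Proposition~\ref{proposition=key} in Section~\ref{section=parallel} and is what enables self-improvement without strict uniqueness of the realizing pair.

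With those tools in hand, I would iterate the moves of Proposition~\ref{proposition=heart} under the bookkeeping of the $(\mathrm{Game})$ from Subsection~\ref{subsection=hypothesis}. At each stage the player produces a conjugator $g\in G$, possibly twisted by an element of $\Pi$, such that the conjugate of the current enlarged stabilizer of $\xi$ (respectively $\eta$) contains $M$ (respectively $L$); isometry of $\tilde{\alpha}$ then makes $(\tilde{\alpha}(g)\cdot\xi,\tilde{\alpha}(g)\cdot\eta)$ a new realizer, the parallelism pseudo-uniqueness forces its $||$-class to coincide with that of $(\xi,\eta)$, and the ``normalizer trick'' therefore enlarges the subgroup fixing $\xi$ and $\eta$. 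Hypothesis $(ii)$, the finiteness of $\Pi$, is used to keep the set of candidate realizers tractable under the swap moves (the analogues of the $w$-move in Proposition~\ref{proposition=heart}, which exchange the roles of $M$ and $L$ and of $\xi$ and $\eta$). The winning-strategy hypothesis $(\mathrm{GAME})$ guarantees that in finitely many stages the joint stabilizer of $\xi$ and $\eta$ contains both $M$ and $L$; by hypothesis $(i)$ it is then all of $G$, so $\xi=\eta$ and $\tilde{D}=0$, a contradiction.

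The hard part, I expect, will be the bookkeeping that ties the abstract $(\mathrm{Game})$ to the parallelism-based self-improvement: one must verify that each move sanctioned by the game really preserves membership in a single $||$-class, that swap moves do not introduce extra parallel classes beyond those accounted for by $\Pi$, and that the enlargement of stabilizers is monotone across stages. The role of $\Pi$ in particular has to be calibrated so that $\Pi$-twists still produce genuine realizers under the $\tilde{\alpha}$-action, which is why $\Pi$ must act on $G$ and must be finite. Once these compatibilities are encoded cleanly, the six assumptions on $\mathcal{X}$ and the three hypotheses on $(G,M,L,\Pi)$ conspire to turn the informal argument of Subsection~\ref{subsection=heart} into a rigorous superintrinsic synthesis.
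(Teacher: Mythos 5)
Your proposal captures the high-level strategy of the paper---ultraproducts to produce a genuine realizer, parallelism-based pseudo-uniqueness as a nonlinear surrogate for ``$\eta-\xi$'', and iterated self-improvement driven by the $(\mathrm{Game})$---but there is a structural gap in how you set up the realizer, which makes the type $(\mathrm{I\hspace{-.1em}I})$ moves fail as written.

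You define $D$ as the distance between fixed-point sets \emph{within a single action} $\alpha$ (and then pass to the ultraproduct of that one action). The paper instead defines $D=D_S$ as the infimum of $d_X(x,y)$ over the \emph{entire class} $\mathcal{C}_{\mathcal{X}}^{(S,1)\text{-}\mathrm{uniform}}$ of $1$-uniform actions (Proposition~\ref{proposition=shalom}), after first choosing a \emph{$\Pi$-invariant} finite generating set $S$ (this is where hypothesis $(ii)$ actually enters). This distinction is not cosmetic. A type $(\mathrm{I\hspace{-.1em}I})$ move replaces the action $\alpha$ by the twisted action $\alpha^{\phi}$ for $\phi\in\mathrm{Inn}(G)\cdot\Pi$. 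For $(\alpha^{\phi},X,y,x)$ to be ``another realizer'' that the induction hypothesis can be fed back into, one needs two things: that $(\alpha^{\phi},X)$ still lies in the class over which the infimum was taken, and that $d(y,x)$ equals the \emph{global} minimum $D_S$. The $\Pi$-invariance of $S$ gives the first, and the definition of $D_S$ over the whole class gives the second. In your framework $D$ is specific to $\alpha$, so for outer $\phi\in\Pi$ the pair $(\alpha^{\phi},X,y,x)$ is not a realizer of your $D$ in any meaningful sense, and the pseudo-uniqueness/induction step breaks. Your remark that finiteness of $\Pi$ ``keeps the set of candidate realizers tractable under swap moves'' gestures at the issue, but the actual fix is the $\Pi$-invariant $S$ plus the class-wide infimum, and you do not supply it.

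Two smaller imprecisions: first, your description of how the $(\mathrm{Game})$ concludes (``the joint stabilizer of $\xi$ and $\eta$ contains both $M$ and $L$ \ldots so $\xi=\eta$ and $\tilde D=0$'') does not match the winning condition, which is that \emph{one} of $H_1,H_2$ equals $G$; the contradiction then comes from $\xi\in\tilde X^{\tilde\alpha(G)}$ (or $\eta$), violating $1$-uniformity. Second, you cite Proposition~\ref{proposition=key} for the type $(\mathrm{I})$ moves without noting the torsion condition on $Q^{\mathrm{abel}}$; that condition (together with $(\mathrm{NPC}3)$) is exactly what distinguishes $(\mathrm{Game})$ from $(\mathrm{Game}^{+})$ and is indispensable for Theorem~\ref{mtheorem=main1} as stated, since the general class $\mathcal{X}$ here is not assumed to consist of $\mathrm{CAT}(0)$ spaces or Banach spaces.
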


\begin{mtheorem}[\textit{Superintrinsic synthesis}, two-subgroup case, special version]\label{mtheorem=main1b}
Let $G,M,L,\Pi$ and $\mathcal{X}$ be as in Theorem~\ref{mtheorem=main1}. Assume, besides, that $\mathcal{X}$ either consists of Banach spaces or of $\mathrm{CAT}(0)$ spaces. Assume that $(G,M,L,\Pi)$ fulfills $(i)$, $(ii)$ as in Theorem~\ref{mtheorem=main1}, and the following $\mathrm{hypothesis}$ $(\mathrm{GAME}^{+})$, which is a weaker hypothesis than $(\mathrm{GAME})$.
\begin{enumerate}
 \item[$(\mathrm{GAME}^{+})$] the player has a winning strategy for the $(\mathrm{Game}^{+})$ for $(M,L,\Pi)$, which is described in Subsection~$\ref{subsection=hypothesis}$.
\end{enumerate}
Then, relative properties $(\mathrm{F}_{\mathcal{X}})$ for $G\geqslant M$ and $G\geqslant L$ imply property $(\mathrm{F}_{\mathcal{X}})$ for $G$.
\end{mtheorem}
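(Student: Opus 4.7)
The proof would follow the same skeleton as for Theorem~\ref{mtheorem=main1}, but exploit the richer structure of Banach spaces and CAT$(0)$ spaces so that the weaker hypothesis (Game$^{+}$) suffices in place of (Game). I would begin by contradiction: suppose there exists $\alpha\colon G\curvearrowright X_0$ with $X_0\in\mathcal{X}$ and $X_0^{\alpha(G)}=\emptyset$. By the relative properties for $G\geqslant M$ and $G\geqslant L$, the sets $X_0^{\alpha(M)}$ and $X_0^{\alpha(L)}$ are nonempty and closed. Using the (pointed) metric ultraproduct construction of Section~\ref{section=up}, together with a rescaling designed so that a fixed point in the limit forces a fixed point in $X_0$, I would pass to an action $\alpha\colon G\curvearrowright X$ with $X\in\mathcal{X}$ (invoking assumption (U)) for which $D:=\mathrm{dist}(X^{\alpha(M)},X^{\alpha(L)})$ is \emph{realized} by some pair $(\xi,\eta)\in X^{\alpha(M)}\times X^{\alpha(L)}$. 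The goal is then to upgrade $(\xi,\eta)$ to a genuine pair in $X^{\alpha(G)}\times X^{\alpha(G)}$.

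The next step is a sharpened Pseudo-Uniqueness specific to the two subclasses at hand. In the Banach case, isometries are affine (Mazur--Ulam), so $\alpha$ splits as a linear part $\pi$ plus a cocycle $b$, and the parallelogram argument of Lemma~\ref{lemma=pseudounique} shows that any two realizers $(\xi,\eta),(\xi',\eta')$ of $D$ satisfy $\xi-\xi'=\eta-\eta'$ and this common difference lies in $X^{\pi(\langle M,L\rangle)}=X^{\pi(G)}$ by hypothesis $(i)$. In the CAT$(0)$ case, the CN-inequality plays the role of strict convexity and the flat strip theorem gives the analogous conclusion in the language of parallel classes of Definition~\ref{definition=parallelism}: $[\xi,\eta]\,||\,[\xi',\eta']$, and $G$ acts on the resulting flat strip by translations along the parallel class. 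In either case the set of realizers is a torsor for an abelian ``translation group'' $V$ on which $G$ acts trivially.

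Because $\Pi$ is finite and acts isometrically on this torsor of realizers (the assumption (TP) ensures that parallelism is a well-defined equivalence relation preserved by isometries), one may single out a canonical $\Pi$-invariant realizer $(\xi^{\ast},\eta^{\ast})$: by linear averaging in the Banach case, and by CAT$(0)$ circumcenter in the CAT$(0)$ case. With $(\xi^{\ast},\eta^{\ast})$ in hand, I would run the self-improvement of Proposition~\ref{proposition=heart} driven by the player's winning strategy for (Game$^{+}$). Each move of type (I) enlarges the current stabilizers $(H_1,H_2)\geqslant (M,L)$ via conjugation, pseudo-uniqueness, and $\Pi$-averaging; each swap move (playing the role of the Weyl element $w$) transfers invariance between $\xi^{\ast}$ and $\eta^{\ast}$, using the crucial additional fact that $\eta^{\ast}-\xi^{\ast}$ (respectively the parallel class of $[\xi^{\ast},\eta^{\ast}]$) is already $G$-invariant. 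This last extra structural input, unavailable in the general (TP) setting, is exactly what permits (Game$^{+}$) to replace the more elaborate bookkeeping of (Game). After finitely many moves, $H_1$ grows to a subgroup containing $\langle M,L\rangle=G$, so $\xi^{\ast}\in X^{\alpha(G)}$, which by the rescaling chosen in the ultraproduct contradicts $X_0^{\alpha(G)}=\emptyset$.

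The main obstacle I expect is the CAT$(0)$ half of the pseudo-uniqueness and averaging step: one must extract from (TP) and the CN-inequality a genuine torsor structure for the set of realizers, verify that $\Pi$ acts isometrically on it and that the CAT$(0)$ circumcenter lands back inside the closed convex set of realizers, and then confirm that the swap move of (Game$^{+}$) interacts correctly with the parallel class rather than with an honest vector difference. The Banach case is then a formal specialization via the linear decomposition $\alpha=\pi+b$, and the remainder of the argument reduces to unwinding the winning strategy provided by (Game$^{+}$).
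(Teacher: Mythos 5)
Your overall scaffolding (contradiction, pass to an ultraproduct with rescaling to get a realizer $(\xi,\eta)$ of $D$, pseudo-uniqueness, then run the game) does match the paper, but there are two genuine gaps in the part that actually distinguishes Theorem~\ref{mtheorem=main1b} from Theorem~\ref{mtheorem=main1}.

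First, you misplace where the relaxation to $(\mathrm{GAME}^{+})$ is used. The only difference between $(\mathrm{Game})$ and $(\mathrm{Game}^{+})$ is in the type $(\mathrm{I})$ moves: $(\mathrm{Game})$ requires $Q^{\mathrm{abel}}$ to be torsion for $Q=\langle P\rangle$, while $(\mathrm{Game}^{+})$ drops that. The type $(\mathrm{I\hspace{-.1em}I})$ (swap) moves are identical in both games, and the paper handles them the same way in both theorems: not by averaging/circumcenter to produce a $\Pi$-invariant realizer, but by choosing a $\Pi$-invariant finite generating set $S$ in Step~0 and observing that twisting by $\mathrm{Inn}(G)\cdot\Pi$ preserves $\mathcal{C}_{\mathcal{X}}^{(S,1)\textrm{-}\mathrm{uniform}}$, so the twisted quadruple is again an (arbitrary) realizer to which the induction hypothesis applies. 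Your torsor/circumcenter step is therefore aimed at the wrong move and is not what licenses $(\mathrm{GAME}^{+})$.

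Second, and more seriously, you do not supply the mechanism by which a type $(\mathrm{I}^{+})$ move with non-torsion $Q^{\mathrm{abel}}$ still fixes $\xi$ and $\eta$. Saying ``$\eta^{\ast}-\xi^{\ast}$ is already $G$-invariant'' is a consequence of pseudo-uniqueness valid for any $\mathcal{X}$ with the six assumptions; it is not special to Banach/CAT$(0)$ and by itself gives nothing about $\alpha(h)\cdot\xi=\xi$ for $h\in P$. The paper's actual inputs are Proposition~\ref{proposition=key}.$(2)$ and $(3)$: in the CAT$(0)$ case, an $h\in P$ with $\alpha(h)\cdot x\ne x$ must have infinite order, producing a bi-infinite geodesic $l_h$, and asymptotics plus the refined relation $||_{\perp}$ force $[x,\alpha(h)\cdot x]\,||_{\perp}\,[\alpha(h)\cdot x,\alpha(h^{2})\cdot x]$, a contradiction; in the Banach case one needs uniform convexity (which follows from $(\mathrm{NPC}1)$ and $(\mathrm{U})$ via Remark~\ref{remark=bnpc}) to get the complemented decomposition $E=E^{\rho(G)}\oplus E'_{\rho(G)}$, and crucially one needs $G^{\mathrm{abel}}$ \emph{finite} to kill the cocycle component $b_1$ landing in $E^{\rho(G)}$. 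That finiteness is extracted from relative $(\mathrm{F}_{\mathcal{X}})$ for $G\geqslant M,L$, finite generation of $G$, and $\mathrm{ab}_G(G)=\mathrm{ab}_G(M)+\mathrm{ab}_G(L)$; your proposal never establishes it, and without it the Banach half of the argument fails.
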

Recall from our notation that $G$ is assumed to be finitely generated. See Example~\ref{example=npc} for such $\mathcal{X}$, and Example~\ref{example=st} for such quadruples $(G,M,L,\Pi)$. 

\begin{remark}\label{remark=2sub}
In general, we may take $M_1,\ldots ,M_l$ for each finite $l$ instead of $M$ and $L$. See Theorem~\ref{mtheorem=main2} in Subsection~\ref{subsection=general} for the statement.
\end{remark}

\begin{remark}\label{remark=sr}
If $\mathcal{X}$ consists of Banach spaces, then by $(\mathrm{NPC}1)$ and $(\mathrm{U})$, automatically they are uniformly convex (see Remark~\ref{remark=bnpc}). In Theorem~\ref{mtheorem=sr}  in Subsection~\ref{subsection=sr}, we extend to the case where they are \textit{superreflexive} (recall from Theorem~\ref{theorem=robust}.$(2)$).
\end{remark}

On fixed point properties for elementary groups, these theorems complete the ``Synthesis Step'' for \textit{all} $\mathcal{X}$ with the six assumptions above. In fact, this synthesis works for more universal groups, which are called \textit{Steinberg groups}. They are written as $\mathrm{St}(n,R)$, and defined as  the group with generators $\{E_{i,j}(r):i\ne j \in [n],\ r\in R\}$ subject to the following three types of relations: for all $r_1,r_2\in R$,
\begin{itemize}
  \item for all $i\ne j$, $E_{i,j}(r_1)E_{i,j}(r_2)=E_{i,j}(r_1+r_2)$;
  \item for all $i\ne j$, $k\ne l$ with $i\ne l$ and $j\ne k$, $[E_{i,j}(r_1),E_{k,l}(r_2)]=1_{\mathrm{St}(n,R)}$; and 
  \item for all $i\ne j\ne k\ne i$, $[E_{i,j}(r_1),E_{j,k}(r_2)]=E_{i,k}(r_1r_2)$.
\end{itemize}
One importance of $\mathrm{St}(n,R)$ is that it is \textit{finitely presented},  provided that $R$ is finitely presented, and that $n\geq 4$ \cite{KM}. Note that the map $E_{i,j}(r)\mapsto e_{i,j}(r)$ gives rise to a natural homomorphism $\mathrm{St}(n,R)\twoheadrightarrow \mathrm{E}(n,R)$. Because fixed point properties pass to group quotients, statements on $\mathrm{St}(n,R)$ is stronger than those on $\mathrm{E}(n,R)$. Recall from Subsection~\ref{subsection=motivations}.$(1)$ the definition of $\mathcal{B}_{\mathrm{NC}L_q}$ and $\mathcal{B}_{\mathrm{Q}L_q}$ (and $\mathcal{B}_{L_q}$). Recall from our notation that $R$ is assumed to be unital and associative.

\begin{corollary}[Synthesis for Steinberg groups]\label{corollary=st}
Let $R$ be finitely generated and $n\geq 3$. Set $\tilde{G}=\mathrm{St}(n,R)$, $\tilde{M}=\langle E_{i,n}(r):i\in[n-1],r\in R\rangle$, and $\tilde{L}=\langle E_{n,j}(r):j\in[n-1],r\in R\rangle$.
\begin{enumerate}[$(a)$]
  \item $($Conditional results$)$ Let $\mathcal{X}$ satisfy the six assumptions as in Theorem~\ref{mtheorem=main1}. Then, for every $n\geq 4$, relative properties $(\mathrm{F}_{\mathcal{X}})$ for $\tilde{G}\geqslant \tilde{M}$ and for $\tilde{G}\geqslant \tilde{L}$ imply property $(\mathrm{F}_{\mathcal{X}})$ for $\tilde{G}$. If, besides, $\mathcal{X}$ consists of $\mathrm{CAT}(0)$ spaces, then the same holds true for $n=3$.

Let $\mathcal{E}$ be a class of superreflexive Banach spaces that satisfies  $(\mathrm{U})$. Then, for every $n\geq 3$, relative properties $(\mathrm{F}_{\mathcal{E}})$ for $\tilde{G}\geqslant \tilde{M}$ and for $\tilde{G}\geqslant \tilde{L}$ imply property $(\mathrm{F}_{\mathcal{E}})$ for $\tilde{G}$.
  \item $($Unconditional results$)$ 
   \begin{enumerate}[$(1)$]
        \item $($Ershov--Jaikin-Zapirain \cite[Theorem 6.2]{EJ}$)$ For every $n\geq 3$ and for every $R$, $\mathrm{St}(n,R)$ has property $(\mathrm{F}_{\mathcal{H}\mathrm{ilbert}})$.
        \item For every $n\geq 4$ and for every $R$, $\mathrm{St}(n,R)$ has property $(\mathrm{F}_{\mathcal{B}_{\mathrm{NC}L_q}})$ for $\mathrm{all}$ $q\in (1,\infty)$. It has property $(\mathrm{F}_{\mathcal{B}_{\mathrm{Q}L_q}})$ for $\mathrm{all}$ $q\in (1,\infty) \setminus \{\frac{2j}{2j-1}: j\in \mathbb{Z}_{\geq 2}\}$.
        \item For every $n\geq 4$ and for every $R$, $\mathrm{St}(n,R)$ has property $(\mathrm{F}_{[\mathcal{H}\mathrm{ilbert}]})$. Here $[\mathcal{H}\mathrm{ilbert}]$ is the class of Banach spaces  isomorphic to Hilbert spaces.
\end{enumerate}
\end{enumerate}
\end{corollary}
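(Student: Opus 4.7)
The plan is to apply the superintrinsic synthesis theorems (Theorems~\ref{mtheorem=main1}, \ref{mtheorem=main1b}, and \ref{mtheorem=sr}) to the triple $(\tilde{G}, \tilde{M}, \tilde{L})$ together with an appropriately chosen finite $\Pi \leqslant \mathrm{Aut}(\tilde{G})$. Once the setup is in place, part $(a)$ reduces to verifying hypotheses $(i)$, $(ii)$, and $(\mathrm{GAME})$ (respectively $(\mathrm{GAME}^{+})$) for this quadruple, while part $(b)$ will follow by combining $(a)$ with known relative fixed-point theorems for the pairs $\tilde{G} \geqslant \tilde{M}$ and $\tilde{G} \geqslant \tilde{L}$.

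For hypothesis $(i)$, the defining Steinberg relation $[E_{i,n}(r_1), E_{n,j}(r_2)] = E_{i,j}(r_1 r_2)$, valid for $i \ne j \in [n-1]$, immediately shows that $\langle \tilde{M}, \tilde{L}\rangle = \tilde{G}$ because every off-diagonal Steinberg generator $E_{i,j}(r)$ with $i,j\in[n-1]$ is recovered from a commutator of an element of $\tilde{M}$ and one of $\tilde{L}$, and the remaining generators $E_{i,n}(r)$ and $E_{n,j}(r)$ lie in $\tilde{M}\cup \tilde{L}$ by definition. For hypothesis $(ii)$ I would take $\Pi$ to be the finite subgroup of $\mathrm{Aut}(\tilde{G})$ generated by conjugation by the lift $w := E_{1,n}(1) E_{n,1}(-1) E_{1,n}(1)$ of the Weyl element used in Proposition~\ref{proposition=heart}; this is an inner automorphism of finite order. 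For the $(\mathrm{GAME})$ hypothesis, the player's winning strategy follows the two-round blueprint of Proposition~\ref{proposition=heart}: the first round enlarges the $\tilde{M}$- and $\tilde{L}$-sides by the block subgroup $\tilde{P} := \langle E_{i,j}(r) : i \ne j \in [n-1],\ r \in R\rangle \cong \mathrm{St}(n-1, R)$, which normalizes both $\tilde{M}$ and $\tilde{L}$; the second round applies $w \in \Pi$, which swaps the two enlarged subgroups so that the Steinberg commutator identity finally forces closure under all of $\tilde{G}$. For $n \geq 4$, the group $\tilde{P}$ is rich enough (it contains an analogous $\mathrm{St}(n-1,R)$-pair $(\tilde{M}', \tilde{L}')$) to fit the full $(\mathrm{GAME})$; for $n=3$, $\tilde{P} \cong \mathrm{St}(2,R)$ is too small for this, but the weaker $(\mathrm{GAME}^{+})$ available in the CAT(0) and superreflexive settings still permits the same two-round scheme thanks to the additional geodesic-midpoint / convexity slack built into Theorems~\ref{mtheorem=main1b} and \ref{mtheorem=sr}.

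For part $(b)$, unconditional results combine $(a)$ with existing relative fixed-point theorems. For $(b).(1)$, Kassabov's Theorem~\ref{theorem=relT}, although stated for $\mathrm{E}(n,R)$, adapts directly to $\tilde{G} \geqslant \tilde{M}$ and $\tilde{G} \geqslant \tilde{L}$, because its proof only exploits the semidirect structure $R^{n-1} \rtimes \mathrm{E}(n-1,R)$, which lifts verbatim to the Steinberg level; combined with $(a)$ (using the CAT(0) or superreflexive refinement to cover $n=3$) this yields property $(\mathrm{F}_{\mathcal{H}\mathrm{ilbert}})$ for $\tilde{G}$. For $(b).(2)$ and $(b).(3)$, I would invoke the Banach-space generalizations of Kassabov's relative property, available via BFGM-style arguments for abelian-by-elementary pairs, to obtain relative $(\mathrm{F}_{\mathcal{B}_{\mathrm{NC}L_q}})$, relative $(\mathrm{F}_{\mathcal{B}_{\mathrm{Q}L_q}})$ (with the standard exceptional set $\{\tfrac{2j}{2j-1} : j \geq 2\}$ appearing from the known gap in these results), and relative $(\mathrm{F}_{[\mathcal{H}\mathrm{ilbert}]})$; applying Theorems~\ref{mtheorem=main1} and \ref{mtheorem=sr} then upgrades these to the full properties for $\tilde{G}$.

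The main obstacle I anticipate is fitting the informal two-round argument of Proposition~\ref{proposition=heart} precisely into the abstract $(\mathrm{GAME})$ framework of Section~\ref{section=hypotheses}, and in particular handling the rank-two case $n=3$, where the internal subgroup $\tilde{P}$ lacks the recursive structure that makes the game strategy transparent for $n \geq 4$. Here the refined $(\mathrm{GAME}^{+})$ and the extra geometric input supplied by the CAT(0) and superreflexive Banach hypotheses are essential; checking that the winning strategy genuinely respects the formal move-rules of $(\mathrm{GAME}^{+})$ is the only nontrivial verification, after which all three parts of $(b)$ follow mechanically.
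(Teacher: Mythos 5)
Your overall strategy matches the paper: reduce $(a)$ to the three hypotheses of Theorems~\ref{mtheorem=main1}, \ref{mtheorem=main1b}, \ref{mtheorem=sr} via the two-move game (first enlarge by $\tilde{P}$, then swap via an automorphism exchanging $\tilde{M}$ and $\tilde{L}$), and then combine $(a)$ with known relative fixed-point results to obtain $(b)$. Several details are, however, off or missing.

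First, your justification of why the full $(\mathrm{GAME})$ requires $n\geq 4$ is not the right reason: it is \emph{not} that $\tilde{P}$ ``contains a nested pair $(\tilde{M}',\tilde{L}')$''. The criterion in Definition~\ref{definition=game} is that $Q^{\mathrm{abel}}$ be torsion for $Q=\langle P\rangle$, and here $Q=\tilde{P}$ is a quotient of $\mathrm{St}(n-1,R)$; the Steinberg commutator relation $(\ast)$ forces $\mathrm{St}(n-1,R)^{\mathrm{abel}}=0$ precisely when $n-1\geq 3$, i.e.\ $n\geq 4$. For $n=3$, $\tilde{P}$ is a quotient of $\mathrm{St}(2,R)$, whose abelianization is typically infinite, so only a type $(\mathrm{I}^{+})$ move is available; this is exactly what $(\mathrm{GAME}^{+})$ permits (Example~\ref{example=st} makes the same point). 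Second, your choice of $\Pi=\langle\mathrm{Inn}(\tilde{w})\rangle$ is harmless but redundant, since inner automorphisms are already built into the type $(\mathrm{II})$ moves via $\Lambda\subseteq\mathrm{Inn}(G)\cdot\Pi$; the paper instead takes the genuinely outer diagram automorphism $\phi_\tau:E_{i,j}(r)\mapsto E_{\tau(i),\tau(j)}(r)$ with $\tau=(1\,n)$ (so that $\Pi$ is visibly finite and no appeal to centrality of $\tilde{w}^4$ in $\mathrm{St}(n,R)$ is needed).

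The more substantial gap is in the ``Part Step'' for $(b)$. You say that BFGM-style arguments for abelian-by-elementary pairs directly yield \emph{relative} $(\mathrm{F}_{\mathcal{E}})$ for the non-Hilbertian $\mathcal{E}$; but the paper explicitly notes (after Definition~\ref{definition=relbanach}) that relative $(\mathrm{T}_{\mathcal{E}})$ does \emph{not} imply relative $(\mathrm{F}_{\mathcal{E}})$ for general Banach classes. The essential intermediate step you are missing is the \emph{rank-raising argument}, Proposition~\ref{proposition=rankraising}: relative $(\mathrm{T}_{\mathcal{E}})$ for $\mathrm{St}(m,R)\ltimes R^m\trianglerighteq R^m$ (from Theorem~\ref{theorem=relTbanach}) is converted to relative $(\mathrm{F}_{\mathcal{E}})$ for $\mathrm{St}(m+1,R)\ltimes R^{m+1}\geqslant R^{m+1}$, and this is then pushed forward into $\tilde{G}=\mathrm{St}(n,R)$; the result for $\tilde{L}$ requires in addition a Cartan-type involution combined with a multiplication-reversing ring anti-automorphism because $R$ is non-commutative, a point you do not address. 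For $(b).(1)$ at $n=3$ the rank-raising is avoidable only because Delorme--Guichardet makes relative $(\mathrm{T}_{\mathcal{H}\mathrm{ilbert}})$ equivalent to relative $(\mathrm{F}_{\mathcal{H}\mathrm{ilbert}})$. Finally, for $(b).(3)$ you cannot apply Theorem~\ref{mtheorem=sr} to $[\mathcal{H}\mathrm{ilbert}]$ directly, since that class is not closed under ultraproducts; the paper works instead with the subclasses $[\mathcal{H}\mathrm{ilbert}]_C$ of Banach--Mazur distance at most $C$, which do satisfy $(\mathrm{U})$, and lets $C\geq 1$ range.
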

Recall  that isomorphisms between Banach spaces  mean linear ones. Property $(\mathrm{F}_{[\mathcal{H}\mathrm{ilbert}]})$ is equivalent to the fixed point property with respect to affine, possibly non-isometric, actions on Hilbert spaces, whose linear part $\rho$ are \textit{uniformly bounded} ($\sup_{\gamma \in G}\|\rho(\gamma)\|_{\mathrm{op}}<\infty$). See \cite[Proposition 2.3]{BFGM} for this equivalence. The \textit{Banach--Mazur distance} between two isomorphic Banach spaces $E$ and $F$ is defined by the infimum of $\|T\|_{\mathrm{op}}\|T^{-1}\|_{\mathrm{op}}$, where $T$ runs over all isomorphisms $E\stackrel{\simeq}{\to} F$. For every $C\geq 1$, define the class $[\mathcal{H}\mathrm{ilbert}]_C(\subseteq [\mathcal{H}\mathrm{ilbert}])$ of all Banach spaces with Banach-Mazur distance from a Hilbert space at most $C$. Note that it satisfies $(\mathrm{U})$.

The emphasis on  Corollary~\ref{corollary=st} is, as mentioned before, \textit{extrinsic} synthesis may have some difficulty in establishing results with respect to a class of ``\textit{unbounded} wildness'', such as $(b)$.$(2)$. In addition, if $R$ is not an algebra over a finite field (for instance, if $R=\mathbb{Z}\langle s,t\rangle$), then known results by extrinsic syntheses may be weaker than $(b).$$(2)$--$(3)$ above. We mention, on the other hand, if $R$ is an algebra over $\mathbb{F}_p$ for sufficiently large prime $p$, then Oppenheim \cite[Theorem~1.11 and Remark~5.7]{oppenheim} provided a considerably strong result (but in ``bounded wildness'').

\begin{remark}\label{remark=l1}
Combination of \cite[Theorem~1.3]{BFGM} and \cite[Theorem~A]{BGM} shows that for every $1\leq q\leq2$, property $(\mathrm{F}_{\mathcal{B}_{L_q}})$ is, in fact, equivalent to property $(\mathrm{F}_{\mathcal{H}\mathrm{ilbert}})$. However, the key to this proof (\cite[Subsection~3.b]{BFGM}) does not hold for non-commutative $L_q$-spaces. Therefore, the assertion of Corollary~\ref{corollary=st}.$(b)$.$(2)$ is \textit{new} even for $1<q<2$. For the case $q=1$, we discuss in Remark~\ref{remark=noncommutativel1}. For property $(\mathrm{F}_{\mathcal{B}_{\mathrm{Q}L_1}})$, $G$ has it if and only if $G$ is finite because $(\sum_{n\geq 1} \ell_{2n})_{\ell_2} \in \mathcal{B}_{\mathrm{Q}L_1}$.
\end{remark}
Corollary~\ref{corollary=st} provides the following byproducts. 

\begin{corollary}\label{corollary=expanders}
\begin{enumerate}[$(1)$]
  \item Let $\tilde{G}=\mathrm{St}(n,R)$, where $n\geq 4$ and $R$ finitely generated. Let $G$ be a group quotient of a finite index subgroup of $\tilde{G}$, and $\Lambda$ be a group that is $\ell_q$-measure equivalent to $G$ for all $q\in (1,\infty)$ $($see a survey \cite[Definition~2.1 and Subsection~2.3.2]{furman} for the definitions$)$. Then, $\Lambda$ is $\mathrm{never}$ relatively hyperbolic.
  \item For every sequence of primes $(p_n)_{n\geq 4}$, the sequence $(\mathrm{SL}(n,\mathbb{F}_{p_n}))_{n\geq 4}$ can form expanders with geometric robust property $(\mathrm{T}_{\mathcal{E}})$. Here, $\mathcal{E}=\mathcal{B}_{\mathrm{NC}L_q}$ for each $q\in (1,\infty)$; $\mathcal{E}=\mathcal{B}_{\mathrm{Q}L_q}$ for each $q\in (1,\infty)\setminus \{\frac{2j}{2j-1}:j\in \mathbb{Z}_{\geq 2}\}$; and $\mathcal{E}=[\mathcal{H}\mathrm{ilbert}]_C$ for every $C\geq 1$.
\end{enumerate}
\end{corollary}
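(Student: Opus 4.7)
Both parts are essentially assembly of prior results, with Corollary~\ref{corollary=st} as the essential new input.

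For $(1)$, the plan is to transfer property $(\mathrm{F}_{\mathcal{B}_{L_q}})$ along the chain $\tilde{G} \supseteq (\text{finite index subgroup}) \twoheadrightarrow G \sim_{\ell_q\text{-ME}} \Lambda$ and then invoke Theorem~\ref{theorem=relhyp}. From Corollary~\ref{corollary=st}$(b)(2)$ together with the inclusion $\mathcal{B}_{L_q} \subseteq \mathcal{B}_{\mathrm{NC}L_q}$, $\tilde{G}$ has $(\mathrm{F}_{\mathcal{B}_{L_q}})$ for every $q \in (1,\infty)$. This property is preserved under passing to finite-index subgroups (standard induction argument, as in Bader--Furman--Gelander--Monod), under group quotients (trivially), and under $\ell_q$-measure equivalence (a known invariance). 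Hence $\Lambda$ inherits $(\mathrm{F}_{\mathcal{B}_{L_q}})$ for every $q \in [2,\infty)$, and Theorem~\ref{theorem=relhyp} then rules out relative hyperbolicity of $\Lambda$ (assuming $\Lambda$ is infinite; otherwise the conclusion is vacuous since relative hyperbolicity is understood with respect to finite collections of infinite subgroups).

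For $(2)$, the plan is to realize $\tilde{H} := \mathrm{St}(4, \mathbb{Z}\langle s, t\rangle)$ as a mother group of $(\mathrm{SL}(n, \mathbb{F}_{p_n}))_{n \geq 4}$ with suitable finite generating sets, then feed it together with property $(\mathrm{F}_{\mathcal{E}})$ into Theorems~\ref{theorem=superexpanders} and \ref{theorem=robust}$(3)$. Following the proof of Lemma~\ref{lemma=superexpanders}, for $n = 4m$ the ring $\mathrm{Mat}_{m \times m}(\mathbb{F}_{p_n})$ is generated as a unital ring by $e_{1,2}(1)$ and a cyclic permutation matrix of order $m$, producing a ring surjection $\mathbb{Z}\langle s, t\rangle \twoheadrightarrow \mathrm{Mat}_{m \times m}(\mathbb{F}_{p_n})$, hence a group surjection $\tilde{H} \twoheadrightarrow \mathrm{E}(4, \mathrm{Mat}_{m \times m}(\mathbb{F}_{p_n})) \simeq \mathrm{SL}(4m, \mathbb{F}_{p_n})$ matching fixed generating sets. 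The cases $n \not\equiv 0 \pmod{4}$ require only a minor modification (e.g., rectangular block decompositions, or starting from $\mathrm{St}(5, \mathbb{Z}\langle s, t\rangle)$, which is equally covered by Corollary~\ref{corollary=st}).

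With the mother-group setup fixed, the rest is hypothesis checking. By Corollary~\ref{corollary=st}$(b)$, $\tilde{H}$ has $(\mathrm{F}_{\mathcal{E}})$ for each of the three listed classes. Each of $\mathcal{B}_{\mathrm{NC}L_q}$, $\mathcal{B}_{\mathrm{Q}L_q}$ (for the permitted $q$), and $[\mathcal{H}\mathrm{ilbert}]_C$ is closed under $\ell_q$-sums for some $q \in (1,\infty)$ and consists of superreflexive Banach spaces satisfying $(\mathrm{U})$. Theorem~\ref{theorem=superexpanders} therefore produces the $\mathcal{E}$-pander property (in particular, ordinary expanders, since $\mathbb{R} \in \mathcal{E}$) for the Cayley graphs. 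Theorem~\ref{theorem=robust}$(2)$ upgrades $(\mathrm{F}_{\mathcal{E}})$ for $\tilde{H}$ to robust property $(\mathrm{T}_{\mathcal{E}})$, and Theorem~\ref{theorem=robust}$(3)$ then transfers this to geometric robust $(\mathrm{T}_{\mathcal{E}})$ for $(\mathrm{Cay}(\mathrm{SL}(n, \mathbb{F}_{p_n}), S_n))_n$. The main obstacle, such as it is, is purely bookkeeping: verifying the $\ell_q$-sum closure and superreflexivity of each specific $\mathcal{E}$, and adapting the mother-group construction for $n \not\equiv 0 \pmod{4}$.
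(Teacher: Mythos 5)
Your proposal is correct and follows essentially the same approach as the paper: part $(1)$ is the transfer of $(\mathrm{F}_{\mathcal{B}_{L_q}})$ along the chain $\tilde{G}\to(\text{finite index})\to G\to\Lambda$ via $\ell_q$-induction and then Theorem~\ref{theorem=relhyp}, and part $(2)$ is the mother-group construction of Lemma~\ref{lemma=superexpanders} combined with Theorem~\ref{theorem=superexpanders} and Theorem~\ref{theorem=robust}$(2)$--$(3)$, exactly as the paper does (just spelled out in more detail). The only difference is cosmetic: the paper compresses the transfer in $(1)$ into a single parenthetical ``$\ell_q$-induction'' and leaves the mother-group bookkeeping in $(2)$ implicit by referring to Lemma~\ref{lemma=superexpanders}.
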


\begin{corollary}[\textit{Reduction of $($part of$)$ Unbounded Rank Super-expander Conjecture to relative  property $(\mathrm{T}_{\mathcal{B}_{\mathrm{uc}}})$}]\label{corollary=superexpanders}
Set $A_p=\mathbb{F}_p\langle s,t\rangle$ for each prime $p$ and $A=\mathbb{Z}\langle s,t\rangle$.
\begin{enumerate}[$(1)$]
  \item If $\mathrm{E}(2,A_p)\ltimes A_p^2 \trianglerighteq A_p^2$ has relative  property $(\mathrm{T}_{\mathcal{B}_{\mathrm{uc}}})$ with respect to the standard finite set $($see Definition~$\ref{definition=relbanach}$ and Definition~$\ref{definition=standard}$, respectively, for definitions of relative property $(\mathrm{T}_{\mathcal{E}})$ and standard finite subsets$)$, then $\mathrm{E}(n,A_p)$ has property $(\mathrm{F}_{\mathcal{B}_{\mathrm{uc}}})$ for all $n\geq 4$. In particular, then, $(1)$ of Conjecture~$\ref{conjecture=superexpanders}$ will be resolved in the affirmative for such $p$.
  \item If $\mathrm{E}(2,A)\ltimes A^2 \trianglerighteq A^2$ has relative property $(\mathrm{T}_{\mathcal{B}_{\mathrm{uc}}})$ with respect to the standard finite set, then $\mathrm{E}(n,A)$ has property $(\mathrm{F}_{\mathcal{B}_{\mathrm{uc}}})$ for all $n\geq 4$. In particular, then, $(2)$ of Conjecture~$\ref{conjecture=superexpanders}$ will be resolved in the affirmative.
\end{enumerate}
\end{corollary}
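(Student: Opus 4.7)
The plan is to deduce both (1) and (2) by the same three-step scheme; I describe (1), as (2) is identical upon substituting $A$ for $A_p$ and invoking Lemma~\ref{lemma=superexpanders}.(2) in place of Lemma~\ref{lemma=superexpanders}.(1). First, observe that $\mathcal{B}_{\mathrm{uc}}$ consists of superreflexive Banach spaces (cf.\ Theorem~\ref{theorem=robust}.(2)) and satisfies assumption $(\mathrm{U})$ by Heinrich's ultrapower theorem for uniformly convex spaces. Hence the superreflexive half of Corollary~\ref{corollary=st}(a) applies to $\tilde{G}:=\mathrm{St}(n,A_p)$ for every $n\geq 3$: it is enough to verify relative property $(\mathrm{F}_{\mathcal{B}_{\mathrm{uc}}})$ for both $\tilde{G}\geqslant \tilde{M}$ and $\tilde{G}\geqslant \tilde{L}$ to obtain property $(\mathrm{F}_{\mathcal{B}_{\mathrm{uc}}})$ for $\tilde{G}$, which descends to $\mathrm{E}(n,A_p)$ via the surjection $\tilde{G}\twoheadrightarrow\mathrm{E}(n,A_p)$. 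Combining the case $n=4$ with Lemma~\ref{lemma=superexpanders}.(1) would then yield Conjecture~\ref{conjecture=superexpanders}.(1).

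Fix an affine isometric action $\alpha$ of $\tilde{G}$ on $X\in\mathcal{B}_{\mathrm{uc}}$ with linear part $\pi$ and cocycle $b$; I treat the case $\tilde{G}\geqslant \tilde{M}$ (the case of $\tilde{L}$ is symmetric). For each pair $\{i,j\}\subseteq[n-1]$, let $\tilde{H}_{i,j}\leqslant\tilde{G}$ be generated by $\{E_{i,j}(r),E_{j,i}(r),E_{i,n}(r),E_{j,n}(r):r\in A_p\}$, and set $\tilde{N}_{i,j}:=\langle E_{i,n}(r),E_{j,n}(r):r\in A_p\rangle$. The Steinberg relations involving the three indices $\{i,j,n\}$ furnish a natural homomorphism $\tilde{H}_{i,j}\to\mathrm{E}(2,A_p)\ltimes A_p^2$ carrying $\tilde{N}_{i,j}$ isomorphically onto the normal $A_p^2$-factor. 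Pulling back the standard finite set along this map yields a finite $\tilde{Q}_{i,j}\subseteq\tilde{H}_{i,j}$, and the Kazhdan-type rel-$(\mathrm{T}_{\mathcal{B}_{\mathrm{uc}}})$ hypothesis is expected to transfer to a displacement inequality
\begin{equation*}
\sup_{\nu\in\tilde{N}_{i,j}}\|b(\nu)\|\;\leq\;C\cdot\max_{q\in\tilde{Q}_{i,j}}\|b(q)\|
\end{equation*}
for the restriction $\alpha|_{\tilde{H}_{i,j}}$, with constant $C$ independent of $\alpha$. Since $\tilde{G}$ is finitely generated, the right-hand side is finite, so $\sup_{r\in A_p}\|b(E_{i,n}(r))\|<\infty$ for each $i\in[n-1]$. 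The delicate point is that the above homomorphism may fail to be injective (a $K_2$-type central extension can intervene) and $\alpha|_{\tilde{H}_{i,j}}$ need not descend to the quotient, so the transfer of this inequality relies crucially on the Kazhdan-constant form of relative property $(\mathrm{T}_{\mathcal{B}_{\mathrm{uc}}})$ formalised by Definitions~\ref{definition=relbanach} and~\ref{definition=standard}; pinning down this robustness is, in my view, the principal obstacle in the plan.

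Finally, since $\tilde{M}\simeq A_p^{n-1}$ is abelian, every $m\in\tilde{M}$ factors uniquely as $m=E_{1,n}(r_1)\cdots E_{n-1,n}(r_{n-1})$, so the cocycle identity together with the isometry of $\pi$ gives
\begin{equation*}
\|b(m)\|\;\leq\;\sum_{i=1}^{n-1}\|b(E_{i,n}(r_i))\|\;\leq\;(n-1)\cdot\max_{i}\sup_{r\in A_p}\|b(E_{i,n}(r))\|,
\end{equation*}
a bound uniform in $m$. Hence $b|_{\tilde{M}}$ is bounded; uniform convexity of $X$ then supplies $X^{\alpha(\tilde{M})}\ne\emptyset$ via the centre-of-circumscribed-ball (Bruhat) construction, establishing relative $(\mathrm{F}_{\mathcal{B}_{\mathrm{uc}}})$ for $\tilde{G}\geqslant \tilde{M}$. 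The case $\tilde{G}\geqslant \tilde{L}$ is symmetric; Corollary~\ref{corollary=st}(a) then delivers property $(\mathrm{F}_{\mathcal{B}_{\mathrm{uc}}})$ for $\mathrm{E}(n,A_p)$ for all $n\geq 4$, and Lemma~\ref{lemma=superexpanders}.(1) closes (1). Statement (2) is obtained by the identical argument with $A$ in place of $A_p$.
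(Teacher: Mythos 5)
Your overall scheme (Part Step, then synthesis via Corollary~\ref{corollary=st}(a), then Lemma~\ref{lemma=superexpanders}) is aligned with the paper, but both halves as you have written them contain genuine gaps.

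First, the claim that $\mathcal{B}_{\mathrm{uc}}$ satisfies assumption $(\mathrm{U})$ is false, and this is exactly the obstruction the paper's one-line proof is engineered to avoid. Heinrich's theorem guarantees that the ultra\emph{power} of a single uniformly convex space is uniformly convex, because the modulus of convexity is retained; but $(\mathrm{U})$ requires closure under metric ultra\emph{products} of arbitrary sequences $(X_n)_n \subseteq \mathcal{X}$, and a sequence of uniformly convex spaces with moduli $\delta_{X_n}$ degenerating to zero can have an ultraproduct that is not even strictly convex. Consequently, the ``superreflexive half of Corollary~\ref{corollary=st}(a)'' cannot be applied directly to $\mathcal{E}=\mathcal{B}_{\mathrm{uc}}$, which invalidates your reduction. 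The paper fixes this by quantifying over $E$: for each individual $E\in\mathcal{B}_{\mathrm{uc}}$, it works with the class $\mathcal{T}_{\ell_2(\mathbb{N},E)}$ from Example~\ref{example=npc}.(3)--(4). Every member of that class is an ultraproduct of rescalings of the single space $\ell_2(\mathbb{N},E)$, so a uniform modulus is retained, $(\mathrm{U})$ holds, and property $(\mathrm{F}_{\mathcal{T}_{\ell_2(\mathbb{N},E)}})$ still pins down a fixed point in $E$ itself (since $E$ sits $1$-complementedly in $\ell_2(\mathbb{N},E)$, and $\ell_2(\mathbb{N},E)\in \mathcal{T}_{\ell_2(\mathbb{N},E)}$). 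Running the argument for each $E$ and taking the conjunction is what yields $(\mathrm{F}_{\mathcal{B}_{\mathrm{uc}}})$.

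Second, for the Part Step you propose an ad hoc displacement inequality across the homomorphisms $\tilde H_{i,j}\to\mathrm{E}(2,A_p)\ltimes A_p^2$, and you correctly flag the resulting non-injectivity/transfer problem as unresolved. That gap is real, but it is also unnecessary: the paper already packages precisely this transfer as Proposition~\ref{proposition=rankraising} (the rank-raising argument, stated for $\mathrm{St}$ as well as $\mathrm{E}$, which side-steps $K_2$-type issues). Starting from the hypothesis that $\mathrm{E}(2,A_p)\ltimes A_p^2 \trianglerighteq A_p^2$ has relative $(\mathrm{T}_{\mathcal{B}_{\mathrm{uc}}})$ with respect to the standard finite set, one restricts to $\mathcal{T}_{\ell_2(\mathbb{N},E)}\subseteq\mathcal{B}_{\mathrm{uc}}$, applies Proposition~\ref{proposition=rankraising} to get relative $(\mathrm{F}_{\mathcal{T}_{\ell_2(\mathbb{N},E)}})$ for $\mathrm{St}(3,R)\ltimes R^3 \geqslant R^3$, iterates (using that relative $(\mathrm{F}_{\mathcal{E}})$ implies relative $(\mathrm{T}_{\mathcal{E}})$) up to $\mathrm{St}(n-1,R)\ltimes R^{n-1}\geqslant R^{n-1}$, and then transfers this along the embedding $\mathrm{St}(n-1,R)\ltimes R^{n-1}\to \mathrm{St}(n,R)$ and the Cartan-type involution to obtain relative $(\mathrm{F}_{\mathcal{T}_{\ell_2(\mathbb{N},E)}})$ for $\tilde G\geqslant \tilde M$ and $\tilde G\geqslant \tilde L$. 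With those two fixes, the remainder of your outline (Corollary~\ref{corollary=st}(a) for the synthesis, then Lemma~\ref{lemma=superexpanders}) does match the paper.
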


Steinberg groups $\mathrm{St}(n,R)$ are ones associated with root system $A_{n-1}$. For other root systems case, see Remark~\ref{remark=root}.

\section{Six assumptions and $(\mathrm{GAME})$ hypothesis}\label{section=hypotheses}
\subsection{Six assumptions on $\mathcal{X}$}\label{subsection=assumption}
The following are the \textit{six assumptions} in Theorem~\ref{mtheorem=main1}. Recall from Definition~\ref{definition=parallelism} the definition of parallelism ($||$).
\begin{itemize}
  \item $(\mathrm{S}_{+})$ [stability under \textit{scaling up}]: for $(X,d)\in \mathcal{X}$ and for  $r\geq 1$, $(X,rd)\in \mathcal{X}$.
  \item $(\mathrm{U})$ [stability under metric \textit{ultraproducts}]: there exists a non-principal ultrafilter $\mathcal{U}$ such that for all $X_n \in \mathcal{X}$ and for all $w_n\in X_n$, the pointed metric ultraproduct $\lim_{\mathcal{U}}(X_n,w_n)$ belongs to $\mathcal{X}$ (after forgetting the base point $[(w_n)_n]$). (See Subsection~\ref{subsection=up} for basics on metric ultraproducts.)
  \item $(\mathrm{NPC})$ [\textit{non-positively curved}]: 
\begin{itemize}
   \item $(\mathrm{NPC}1)$: the space $X$ is a uniquely geodesic space (this means that, every pair of points is connected by a unique geodesic segment), and for all two geodesic segments $[x,y]$ and $[x',y']$ such that $d(x,x')=d(y,y')$,  and for all $0\leq t\leq 1$, $d(m_{t}[x,y],m_{t}[x',y'])\leq d(x,x')$.
    \item $(\mathrm{NPC}2)$: if $[x,y]\, ||\,[x',y']$, then $[x,x']\, ||\,[y,y']$.
   \item $(\mathrm{NPC}3)$: if $[x,y]\, ||\,[y,z]$, then $[x,y]\cup [y,z]$ (the concatenation of $[x,y]$ and $[y,z]$) is a geodesic.
\end{itemize}
  \item $(\mathrm{TP})$ [\textit{transitive law of the parallelism}]: $(2)$ of Definition~\ref{definition=parallelism}.
\end{itemize}

Sometimes, we replace $(\mathrm{NPC}1)$ with stronger assumption $(\Theta$-$\mathrm{NPC}1)$: for a (fixed)  strictly increasing continuous function $\Theta\colon \mathbb{R}_{\geq 0}\to \mathbb{R}_{\geq 0}$, we say that $X$ satisfies $(\Theta$-$\mathrm{NPC}1)$ if for all pairs $[x,y]$ and $[x',y']$ of geodesic segments in $X$ and for all $0\leq t\leq 1$, 
\[
\Theta(d(m_t[x,y], m_t[x',y']))\leq (1-t)\Theta(d(x,x'))+t\Theta(d(y,y')).
\]
By abuse of notation, we say that $\mathcal{X}$ satisfies, respectively, $(\mathrm{NPC}i)$ ($i=1,2,3$) and $(\Theta$-$\mathrm{NPC}1)$ if every $X\in \mathcal{X}$ satisfies the corresponding property. 

\begin{remark}\label{remark=bnpc}
All \textit{BNPC}  spaces satisfy $(\mathrm{NPC}1)$--$(\mathrm{NPC}3)$. See \cite[Theorem~3.14]{busemann}. Here, the Busemann NPC condition coincides with $(\Theta$-$\mathrm{ NPC}1)$ assumption for $\Theta(r):=r$. For a Banach space, the BNPC condition exactly corresponds to the strict convexity. 
Due to $(\mathrm{U})$, we automatically impose certain ``uniformity'' on NPC-conditions on $X\in \mathcal{X}$. For instance, for a class of Banach spaces $\mathcal{E}$ with $(\mathrm{U})$ and $(\mathrm{NPC}1)$, every $E\in \mathcal{E}$ is  uniformly convex. This is because a metric ultraproduct of a (fixed) Banach space $E$ is strictly convex if and only if $E$ is uniformly convex  (recall from Definition~\ref{definition=superexpanders} the definition of uniform convexity). 
\end{remark}

\begin{remark}\label{remark=tp}
We warn that the assumption $(\mathrm{TP})$ is strong. For instance, for two unit squares $\mathrm{ABCD}$ and $\mathrm{EFGH}$, glue the triangles $\mathrm{BCD}$ and $\mathrm{FGH}$. In this way, we may obtain a $\mathrm{CAT}(0)$ space $\mathrm{ABCD}$-$\mathrm{E}$. Then, there, $[\mathrm{A},\mathrm{B}]\,||\,[\mathrm{D},\mathrm{C}]$ and $[\mathrm{D},\mathrm{C}]\,||\,[\mathrm{E},\mathrm{B}]$. Nevertheless, $[\mathrm{A},\mathrm{B}]$ and $[\mathrm{E},\mathrm{B}]$ are \textit{not} parallel. 

However, a BNPC space $X$ under the following two conditions satisfies $(\mathrm{TP})$:
\begin{itemize}
  \item the space $X$ is \textit{uniquely geodesically $($bi-$)$complete}. That means, every geodesic segment is uniquely extendable to a bi-infinite geodesic line; and 
  \item if $[x,y]\,||\,[x',y']$, then the two (uniquely) extended geodesic lines are \textit{asymptotic} (this means, the distance function for these two geodesics is bounded).
\end{itemize}

For instance, a real-analytic Hadamard manifold satisfies these two conditions because, then, for two geodesic segments, the square of distance function for them is real-analytic. We refer to \cite{GKM} for further discussions.
\end{remark}

\begin{example}[Examples of $\mathcal{X}$ that satisfies these six assumptions]\label{example=npc}
\begin{enumerate}[$(1)$]
  \item The class $\mathcal{B}_{L_q}$ and $\mathcal{B}_{\mathrm{NC}L_q}$ for all $1<q<\infty$ (see below Theorem~\ref{theorem=robust}). Not true for $q=1$. Also, $\mathcal{B}_{\mathrm{Q}L_q}$ for $q\in (1,\infty)$. Indeed,  for Banach spaces $(E_n)_n$ and subspaces $(F_n)_n$, the ultraproduct $\lim_{\mathcal{U}}(E_n/F_n)$ is naturally isometric to $(\lim_{\mathcal{U}}E_n)/(\lim_{\mathcal{U}}F_n)$. 
  \item The class $\mathcal{A}\mathbb{R}$ of all $\mathbb{R}$-trees.
  \item In general, fix a metric space $(X,d)$. Consider the class $\mathcal{T}_X$ of all metric ultraproducts of $((X,r_nd,w_n))_{n\in \mathbb{N}}$ for all $r_n\geq 1$ and $w_n\in X$ with respect to \textit{all} ultrafilters  (after forgetting the basepoint)  satisfy $(\mathrm{S}_{+})$ and $(\mathrm{U})$. For $(\mathrm{U})$, see \cite[Proposition~3.23 and Corollary~3.24]{DS}. Note that we here allow also principal ultrafilter, so that $(X,rd) \in \mathcal{X}$ for all $r\geq 1$. If all of the elements in $\mathcal{T}_{X}$ satisfies $(\mathrm{NPC}1)$--$(\mathrm{NPC}3)$ and $(\mathrm{TP})$, then $\mathcal{T}_{X}$ satisfies the six assumptions.
  \item An example in direction to $(3)$: The class $\mathcal{T}_{E}$ for a uniformly convex Banach space $E$. 
  \item Another example in that direction is $\mathcal{T}_{X}$ for $X$ a symmetric  space of non-compact type. It follows from Remark~\ref{remark=tp}, because every element in $\mathcal{T}_{X}$ is either an upscaling of $X$, or a Euclidean space. Here it is crucial that in $(\mathrm{S}_{+})$, we only require stability of scaling \textit{up} to avoid having asymptotic cones, which may admit branching of geodesics. More generally, $\mathcal{T}_{X}$ for $X$ a real-analytic homogeneous Hadamard manifold works.
\end{enumerate}
\end{example}

We roughly describe roles of these assumptions (compare with Subsection~\ref{subsection=heart}).
\begin{itemize}
  \item $(\mathrm{S}_+)$ and $(\mathrm{U})$: to ensure the ``existence of realizers'' in a certain situation. See Section~\ref{section=up}.
  \item $(\mathrm{NPC}1)$--$(\mathrm{NPC}3)$ and $(\mathrm{TP})$: to prove ``Pseudo-Uniqueness'' of such realizers. See Section~\ref{section=parallel}.
  \item $(\Theta$-$\mathrm{NPC}1)$, $(\mathrm{NPC}2)$, $(\mathrm{NPC}3)$ and $(\mathrm{TP})$: to prove ``Pseudo-Uniqueness of realizers''  for general cases (corresponding to $M_1,\ldots ,M_l$). See Subsection~\ref{subsection=generalproof}.
\end{itemize}

\subsection{The $(\mathrm{Game})$ and $(\mathrm{Game}^{+})$}\label{subsection=hypothesis}
First, we present the definition of $(\mathrm{Game}^{+})$. This is a formalization of the rough \textit{self-improvement argument} appearing in Proposition~\ref{proposition=heart} in an extended way.

The ``\textit{$(\mathrm{Game}^{+})$ for $(M,L,\Pi)$}'' is a one-player game. We fix $M,L\leqslant G$ and $\Pi\leqslant \mathrm{Aut}(G)$, and keep them unchanged. We take $H_1,H_2\leqslant G$. These two subgroups vary stage by stage in the game. 

\noindent
(Rules of $(\mathrm{Game}^{+})$:)
\begin{itemize}
  \item in the initial stage, $H_1=M$ and $H_2=L$;
  \item in each stage, the player is allowed to enlarge both of $H_1$ and $H_2$ by admissible moves. The admissible moves consist of type $(\mathrm{I}^{+})$ moves and type $(\mathrm{I\hspace{-.1em}I})$ moves. Type $(\mathrm{I\hspace{-.1em}I})$ moves are indexed by elements in $\mathrm{Sym}(2)=\{\mathrm{id},(12)\}$. Hence, they are of type $(\mathrm{I\hspace{-.1em}I}_{\mathrm{id}})$, and of type $(\mathrm{I\hspace{-.1em}I}_{(12)})$. We define each move below; and
  \item the winning condition is that within finite steps of moves, the player sets either $H_1=G$ or $H_2=G$.
\end{itemize}
Rough meaning of these rules is the following. Assume $X\in \mathcal{X}$ and $\alpha \colon G\curvearrowright X$. Take a pair $(x,y)$, where $x\in X^{\alpha(M)}$ and $y\in X^{\alpha(L)}$ in a certain manner similar to one in Proposition~\ref{proposition=heart}. Then, (under $(i)$ and $(ii)$ as in Theorem~\ref{mtheorem=main1}), \textit{$H_1$ and $H_2$ in each stage, respectively, represent the subgroups such that we know that $x\in X^{\alpha(H_1)}$ and $y\in X^{\alpha(H_2)}$ at that stage.} Therefore, the winning condition above exactly corresponds to that either of $x$ or $y$ lies in $X^{\alpha(G)}$. From these backgrounds, the following formulations of moves may not be too intricate.

\noindent
(Rules of the moves in $(\mathrm{Game}^{+})$ for $(M,L,\Pi)$.)
\begin{itemize}
  \item \textit{Type $(\mathrm{I}^{+})$ move}: pick a subset $P\subseteq G$ such that for all $h\in P$,
\[
hH_1h^{-1}\geqslant M \quad \mathrm{and}\quad hH_2h^{-1}\geqslant L.
\]
Then, enlarge $H_1$ and $H_2$ as:
\begin{table}[h]
 \begin{tabular}{c|c}
   $H_1$  & $H_2$ \\ \hline 
$\langle H_1,P\rangle$ & $\langle H_2,P\rangle$.
  \end{tabular}
\end{table}

Here, by the table above, we denote that we enlarge $H_1$ to $\langle H_1,P\rangle$, and that set it as the new $H_1$ at the stage after this move; and similarly on $H_2$. In the other moves below, we use similar tables to indicate such enlargements.
  \item \textit{Type $(\mathrm{I\hspace{-.1em}I}_{\sigma})$ move} for $\sigma \in \mathrm{Sym}(2)$: pick a subset $\Lambda\subseteq \mathrm{Inn}(G)\cdot \Pi(\leqslant \mathrm{Aut(G)})$ such that for all $\phi\in \Lambda$,
\[
H_{\sigma^{-1}(1)}\geqslant \phi(M)\quad \mathrm{and}\quad H_{\sigma^{-1}(2)} \geqslant \phi(L).
\]
Then, enlarge $H_1$ and $H_2$  as:
\begin{table}[h]
 \begin{tabular}{c|c}
   $H_1$  & $H_{\sigma(1)} $\\ \hline 
$\langle H_1,\bigcup_{\phi \in \Lambda}\phi(H_{\sigma(1)})\rangle$& $\langle H_2,\bigcup_{\phi \in \Lambda}\phi(H_{\sigma(2)})\rangle$.
  \end{tabular}
\end{table}
\end{itemize}
Note that $\sigma=\sigma^{-1}$ for $\sigma\in\mathrm{Sym}(2)$. We, nevertheless, use $\sigma^{-1}$ above because this is the right formulation in general case of $(M_1,\ldots,M_l,\Pi)$ (in ``$(\mathrm{Game}_l^{+})$'').

The first move in the proof of Proposition~\ref{proposition=heart} is of type $(\mathrm{I})$. The second move there is of type $(\mathrm{I\hspace{-.1em}I}_{(12)})$, where $\Lambda$ is a singleton of the inner conjugation of $w^{-1}$.

This $(\mathrm{Game}^+)$ is a relaxed version of $(\mathrm{Game})$ (``${}^+$'' indicates the relaxation). In general case, we need to impose one more condition on type $(\mathrm{I})$ moves.

\begin{definition}[$(\mathrm{Game})$ for $(M,L,\Pi)$]\label{definition=game}
We define \textit{type $(\mathrm{I})$ move} by imposing on $P$ in type $(\mathrm{I}^{+})$ move the additional condition that $Q^{\mathrm{abel}}$ is a torsion group. Here $Q:=\langle P\rangle$, and ${}^{\mathrm{abel}}$ means the abelianization. The \textit{$(\mathrm{Game})$} is defined by replacing type $(\mathrm{I}^+)$ moves with type $(\mathrm{I})$ moves in the definition of $(\mathrm{Game}^+)$.
\end{definition}

\begin{example}[Examples of quadruples $(G,M,L,\Pi)$]\label{example=st}
As we saw in Proposition~\ref{proposition=heart}, for $G=\mathrm{E}(n,R)$, $M=\langle e_{i,n}(r):i\in [n-1],r\in R\rangle$ and $L=\langle e_{n,j}(r):j\in [n-1],r\in R\rangle$, the quadruple $(G,M,L,\{1_{\mathrm{Aut}(G)}\})$ satisfies the hypothesis $(\mathrm{GAME}^{+})$ as in Theorem~\ref{mtheorem=main1b} for every $n\geq 3$. For $P=\langle e_{i,j}(r):i\ne j \in [n-1],r\in R\rangle(\simeq \mathrm{E}(n-1,R))$ in this example, $Q^{\mathrm{abel}}$ is trivial as long as $n\geq 4$ (by $(\ast)$). Hence, the quadruple $(G,M,L,\{1_{\mathrm{Aut}(G)}\})$ satisfies the hypothesis $(\mathrm{GAME})$ as in Theorem~\ref{mtheorem=main1} if $n\geq 4$.

If we switch to the case of Steinberg groups, then we employ $\Pi:=\langle \phi_{\tau}\rangle(=\{1_{\mathrm{Aut}(G)}, \phi_{\tau}\})$, where $\tau$ is the transposition of $\{1,n\}$ (inside $[n]$) and $\phi_{\tau}(E_{i,j}(r)):=E_{\tau(i),\tau(j)}(r)$ (this gives rise to an automorphism). Then, we may extend the argument in Proposition~\ref{proposition=heart} to this case, by replacing the inner conjugation of $w^{-1}$ with $\phi_{\tau}$. Thus, for $\tilde{G},\tilde{M}$, and $\tilde{L}$ as in Corollary~\ref{corollary=st}, \textit{the quadruple $(\tilde{G},\tilde{M},\tilde{L},\Pi)$ satisfies $(\mathrm{GAME}^+)$ for every $n\geq 3$; and it does $(\mathrm{GAME})$, provided that $n\geq 4$}.
\end{example}

It was  pointed out by de la Harpe that,  on self-improvement process, there might be some formal similarity to the \textit{Mautner phenomenon} on unitary representations of continuous (Lie) groups. See \cite[Lemma 1.4.8]{BHV}.

\subsection{Main theorem in full generality}\label{subsection=general}
\begin{mtheorem}[\textit{Superintrinsic synthesis}, general case]\label{mtheorem=main2}
Let $\mathcal{X}$ be a class that satisfies  assumptions $(\mathrm{S}_{+})$, $(\mathrm{U})$, $(\Theta$-$\mathrm{NPC}1)$, $(\mathrm{NPC}2)$, $(\mathrm{NPC}3)$, and $(\mathrm{TP})$ for some $\Theta$. Let $l\in \mathbb{N}_{\geq 3}$. Let $M_1,\ldots ,M_l\leqslant G$, and $\Pi \leqslant \mathrm{Aut}(G)$. Assume that the following three hypotheses on $(G,M_1,\ldots ,M_l,\Pi)$ are fulfilled:

\noindent
\underline{$\mathrm{Hypotheses}$:}
\begin{enumerate}
 \item[$(i_{l})$] the union $\bigcup_{i=1}^l M_i$ generates $G$;
 \item[$(ii)$] the subgroup $\Pi$ is finite; and
 \item[$(\mathrm{GAME}_l)$] the player has a winning strategy for the $(\mathrm{Game}_l)$ for $(M_1,\ldots ,M_l,\Pi)$.
\end{enumerate}

Then, relative properties $(\mathrm{F}_{\mathcal{X}})$ for $G\geqslant M_i$ 
for all $i$ imply property $(\mathrm{F}_{\mathcal{X}})$ for $G$.

In addition, if $\mathcal{X}$, besides, consists either of Banach spaces or of $\mathrm{CAT}(0)$ spaces, then hypothesis $(\mathrm{GAME}_l)$ may be relaxed to the following hypothesis $(\mathrm{GAME}_l^{+})$: ``the existence of winning strategies for $(\mathrm{Game}^+_l)$ for $(M_1,\ldots ,M_l,\Pi)$''.
\end{mtheorem}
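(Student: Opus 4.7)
My plan is to extend the four-stage blueprint of the two-subgroup case (Theorems~\ref{mtheorem=main1} and~\ref{mtheorem=main1b}): first, reduce the absence of a global fixed point to a uniform action on a rescaled ultraproduct space; second, realise an $l$-ary functional built from $\Theta$ and the pairwise distances; third, establish Pseudo-Uniqueness of realizers via the parallelism relation $||$; and fourth, enlarge subgroups $H_1,\ldots,H_l$ stage by stage along a winning strategy for $(\mathrm{Game}_l)$ until some $H_{i_0}$ exhausts $G$, contradicting the first step.

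Suppose $G$ fails $(\mathrm{F}_{\mathcal{X}})$, and pick actions $\alpha_n\colon G\curvearrowright X_n\in\mathcal{X}$ whose displacements on a finite generating set are unbounded while remaining controllable on each $M_i$ via the relative fixed point sets. Using $(\mathrm{S}_{+})$ to rescale metrics upward and $(\mathrm{U})$ to take a pointed metric ultraproduct, one obtains (as in Section~\ref{section=up}) a limit space $X_\infty\in\mathcal{X}$ and a limit action $\alpha_\infty$ for which $X_\infty^{\alpha_\infty(M_i)}\ne\emptyset$ for all $i$ and for which the functional
\[
F(x_1,\ldots,x_l):=\sum_{1\leq i<j\leq l}\Theta\bigl(d(x_i,x_j)\bigr),\qquad x_i\in X_\infty^{\alpha_\infty(M_i)},
\]
attains its finite infimum $D$ at some realizer $\xi=(\xi_1,\ldots,\xi_l)$. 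Since $\Pi$ is finite by $(ii)$, $\alpha_\infty$ extends by isometries to $G\rtimes\Pi$, so all group elements appearing in the game act as isometries of $X_\infty$.

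For Pseudo-Uniqueness I would show that for any two realizers $\xi$ and $\xi'$, the segments $[\xi_i,\xi_i']$ and $[\xi_j,\xi_j']$ are pairwise parallel. The argument mirrors Lemma~\ref{lemma=pseudounique}: the midpoint tuple $\mu_k:=m_{1/2}[\xi_k,\xi_k']$ satisfies $\mu_k\in X_\infty^{\alpha_\infty(M_k)}$ by $(\mathrm{NPC}1)$ applied to the isometric action, and $F(\mu)\leq D$ via the convexity in $(\Theta\text{-}\mathrm{NPC}1)$; minimality forces equality throughout, and strict monotonicity of $\Theta$ together with $(\mathrm{NPC}1)$ yields the parallelisms. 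Assumptions $(\mathrm{NPC}2)$, $(\mathrm{NPC}3)$, and $(\mathrm{TP})$ then organise these parallelisms into a coherent common ``translation class'' shared by all $l$ coordinates---the non-linear substitute for ``$\xi_i-\xi_i'$ is one and the same $G$-fixed vector''. Now I inductively track $H_1,\ldots,H_l$ (initially $H_i=M_i$) together with a realizer $\xi$ satisfying $\xi_i\in X_\infty^{\alpha_\infty(H_i)}$. A type $(\mathrm{I})$ move by $P$ sends $\xi$ to the realizer $\alpha_\infty(h)\cdot\xi$ for each $h\in P$; Pseudo-Uniqueness identifies the two up to the common translation, and the torsion assumption on $Q^{\mathrm{abel}}$ for $Q=\langle P\rangle$, together with the cocycle-like action of $Q$ on this translation, forces it to vanish, promoting each $\xi_i$ to a fixed point of $\langle H_i,P\rangle$. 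A type $(\mathrm{I\hspace{-.1em}I}_\sigma)$ move by $\Lambda\subseteq\mathrm{Inn}(G)\cdot\Pi$ sends $\xi$ to $(\alpha_\infty(\phi)\cdot\xi_{\sigma(1)},\ldots,\alpha_\infty(\phi)\cdot\xi_{\sigma(l)})$ for each $\phi\in\Lambda$, which is again a realizer by the constraint $H_{\sigma^{-1}(i)}\geqslant\phi(M_i)$, and Pseudo-Uniqueness gives the prescribed enlargement. After finitely many moves the winning strategy yields some $H_{i_0}=G$, producing a $G$-fixed $\xi_{i_0}$ and the desired contradiction.

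For the Banach/$\mathrm{CAT}(0)$ addendum, one uses linearity (respectively, metric projection onto closed convex sets) to place the common translation inside a genuine linear or flat subspace on which $Q$ acts linearly; the cocycle identity then upgrades bounded $Q$-orbits to fixed points without any torsion hypothesis, so type $(\mathrm{I}^{+})$ moves suffice and $(\mathrm{GAME}_l^{+})$ is enough. The hardest step I anticipate is the Pseudo-Uniqueness analysis: rigorously extracting a single translation class from the $\binom{l}{2}$ pairwise parallelisms and tracking how $(\mathrm{TP})$ glues them together, then verifying that the torsion hypothesis really annihilates this class after a type $(\mathrm{I})$ move. I expect this to follow the two-subgroup template of Proposition~\ref{proposition=key} and the general strategy indicated in Subsection~\ref{subsection=generalproof}, with only notational bookkeeping over $l$ coordinates and $\mathrm{Sym}(l)$ in place of $\mathrm{Sym}(2)$.
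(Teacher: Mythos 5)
The functional you minimize needs a square, and this is not a cosmetic change. You set $F(x_1,\ldots,x_l)=\sum_{i<j}\Theta(d(x_i,x_j))$, but the paper's $\Theta$-energy is $\mathbb{E}^{\Theta}(z_1,\ldots,z_l)=\sum_{i<j}\Theta(d(z_i,z_j))^2$. With your $F$, if $\xi$ and $\xi'$ are two realizers and $\mu_k=m_{1/2}[\xi_k,\xi'_k]$, the $(\Theta\text{-}\mathrm{NPC}1)$ inequality at $t=1/2$ gives $F(\mu)\leq\frac{1}{2}F(\xi)+\frac{1}{2}F(\xi')=D$, and minimality makes this an equality, so $(\Theta\text{-}\mathrm{NPC}1)$ is tight for each pair. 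But for $l\geq 3$ this does \emph{not} force $d(\xi_i,\xi_j)=d(\xi'_i,\xi'_j)$: two minimizers of a sum of $\binom{l}{2}$ terms may distribute the same total differently over pairs. Without those equalities you cannot conclude $[\xi_i,\xi'_i]\,||\,[\xi_j,\xi'_j]$ (the definition of $||$ demands $d(\xi_i,\xi_j)=d(\xi'_i,\xi'_j)=d(\mu_i,\mu_j)$), and you also cannot invoke plain $(\mathrm{NPC}1)$ for the segments $[\xi_i,\xi_j]$ and $[\xi'_i,\xi'_j]$ since its hypothesis requires equal endpoint distances. This is precisely what the square buys: the paper's chain
\[
\mathbb{E}^{\Theta}(\mu)\leq\frac{1}{4}\sum_{i<j}\bigl(\Theta(d(\xi_i,\xi_j))+\Theta(d(\xi'_i,\xi'_j))\bigr)^2\leq\frac{1}{2}\sum_{i<j}\bigl(\Theta(d(\xi_i,\xi_j))^2+\Theta(d(\xi'_i,\xi'_j))^2\bigr)=D
\]
forces, upon equality, both $(\Theta\text{-}\mathrm{NPC}1)$-tightness \emph{and} $\Theta(d(\xi_i,\xi_j))=\Theta(d(\xi'_i,\xi'_j))$ for every $i<j$ (strict convexity of $r\mapsto r^2$), and only then do the parallelisms of Lemma~\ref{lemma=key2} follow. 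Note the two-subgroup case masked this issue, since with $l=2$ there is a single term and any two realizers automatically share it; the difficulty is specific to $l\geq 3$.

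Apart from that, your outline matches the paper's Subsection~\ref{subsection=generalproof}: realize the energy by scaling and ultraproducts, prove $l$-ary Pseudo-Uniqueness, and run the game-theoretic self-improvement with Lemma~\ref{lemma=key2} in place of Lemma~\ref{lemma=parallel1}. A small slip: in the type $(\mathrm{I\hspace{-.1em}I}_\sigma)$ move you write $\alpha_\infty(\phi)\cdot\xi_{\sigma(i)}$, but $\phi$ is an automorphism rather than a group element; the correct object is the twisted action $\alpha^{\phi}$ evaluated on the permuted tuple, which stays in the same $1$-uniform class because $S$ was chosen $\Pi$-invariant, and the induction hypothesis is then applied to this new realizer for the \emph{same} distance $D$.
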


\begin{definition}[$(\mathrm{Game}_l^+)$ and $(\mathrm{Game}_l)$ for $(M_1,\ldots ,M_l,\Pi)$]\label{definition=gamel}
The $(\mathrm{Game}_l^+)$ is again a one-player game. We fix $(M_1,\ldots, M_l,\Pi)$ and keep it unchanged. We set $H_1,\ldots ,H_l\leq G$, and they vary stage by stage. In the initial stage,  $H_i=M_i$ for $1\leq i\leq l$. In each stage, the player enlarges all of $H_i$'s by the moves of type $(\mathrm{I}_l^+)$ and $(\mathrm{I\hspace{-.1em}I}_l)$, which we define below. The winning condition is to set $H_i=G$ for some $i$.

Here type $(\mathrm{I}_l^+)$ moves are indexed by $\tau \in \mathrm{Sym}(l)$ that are \textit{not} derangement, and type $(\mathrm{I\hspace{-.1em}I}_l)$ moves are indexed by $\sigma \in \mathrm{Sym}(l)$. Recall that, a \textit{derangement} is a fixed-point-free permutation, and we denote $\mathrm{Der}(l)(\subseteq \mathrm{Sym}(l))$ by the set of them.

\noindent
(Rules of the moves in $(\mathrm{Game}_l^+)$ for $(M_1,\ldots,M_l,\Pi)$.)
\begin{itemize}
  \item \textit{Type $(\mathrm{I}_{\tau,l}^+)$ move} for $\tau \in \mathrm{Sym}(l)\setminus \mathrm{Der}(l)$: pick $P\subseteq G$ such that for all $h\in P$ and for all $1\leq i\leq l$, $hH_ih^{-1}\geqslant  M_{\tau(i)}$. Then, enlarge each $H_i$ as:
\begin{table}[h]
 \begin{tabular}{c|c}
   $H_i$  & $H_j$ \\ \hline 
$\langle H_i,P\rangle$ & $H_j$.
  \end{tabular}
\end{table}

Here, $i$ runs over all $i$ with $\tau(i)=i$, and $j$ runs over all other indices.
  \item \textit{Type $(\mathrm{I\hspace{-.1em}I}_{\sigma,l})$ move} for $\sigma \in \mathrm{Sym}(l)$: pick $\Lambda\subseteq \mathrm{Inn}(G)\cdot \Pi$ with $H_{i}\geqslant \phi(M_{\sigma(i)})$ for all $\phi\in \Lambda$ and for all $1\leq i \leq l$. Then, enlarge each $H_i$ as: for all $1\leq i\leq l$,
\begin{table}[h]
 \begin{tabular}{c}
   $H_i$   \\ \hline 
$\langle H_i,\bigcup_{\phi \in \Lambda}\phi(H_{\sigma(i)})\rangle$.
  \end{tabular}
\end{table}
\end{itemize}

For $(\mathrm{Game}_l)$, \textit{type $(\mathrm{I}_{l})$ moves} are defined by imposing on type $(\mathrm{I}_{l}^+)$ moves the condition that $Q^{\mathrm{abel}}$ is torsion for $Q:=\langle P\rangle$. The \textit{$(\mathrm{Game}_l)$} is defined by replacing all type $(\mathrm{I}_{l}^+)$ moves with type $(\mathrm{I}_{l})$ moves in $(\mathrm{Game}^+_l)$.
\end{definition}

\section{Self-improve argument for type $(\mathrm{I})$ moves, and Pseudo-Uniqueness}\label{section=parallel}
In this section, we present a rigorous statement, in the spirit of Proposition~\ref{proposition=heart}, for \textit{our self-improvement argument for type $(\mathrm{I})$ $($and $(\mathrm{I}^+))$ moves} for realizers $(x,y)$ of the distance. To obtain such self-improvement, we study \textit{Pseudo-Uniqueness} (recall Lemma~\ref{lemma=pseudounique}). The goal in this section is the following proposition.

\begin{proposition}[Key proposition: \textit{Self-improvement argument for type $(\mathrm{I})$ moves}]\label{proposition=key}

Assume that $X$ satisfies $(\mathrm{NPC}1)$, $(\mathrm{NPC}2)$, and $(\mathrm{TP})$. Let $G\geqslant M,L$ with $\langle M,L\rangle =G$. Let $\alpha\colon G\curvearrowright X$ with $X^{\alpha(M)}\ne \emptyset$ and $X^{\alpha(L)}\ne \emptyset$. Assume that $(x,y)$, where $x\in X^{\alpha(M)}$ and $y\in X^{\alpha(L)}$, realizes $D:=\mathrm{dist}(X^{\alpha(M)},X^{\alpha(L)})$. Let $H_1,H_2\leqslant G$ and $P\subseteq G$. Assume that $x\in X^{\alpha(H_1)}$ and $y\in X^{\alpha(H_2)}$; and that $hH_1h^{-1}\geqslant M$ and $hH_2h^{-1}\geqslant L$ for all $h\in P$. Set $Q:=\langle P\rangle$.

\begin{enumerate}[$(1)$]
  \item Then, the orbit map of $x$ by $Q$, $\Psi_x\colon Q\to X\colon g\mapsto g\cdot x$, factors through the abelianization map $Q\twoheadrightarrow Q^{\mathrm{abel}}$. The same holds for the orbit map of $y$ by $Q$.
  \item If, besides, $X$ is $\mathrm{CAT}(0)$, then $x\in X^{\alpha(\langle H_1,P\rangle)}$ and $y\in X^{\alpha(\langle H_2,P\rangle)}$.
  \item If, besides, $X$ is a uniformly convex Banach space and if $G^{\mathrm{abel}}$ is finite, then $x\in X^{\alpha(\langle H_1,P\rangle)}$ and $y\in X^{\alpha(\langle H_2,P\rangle)}$.
\end{enumerate}
In particular, if, besides, $Q^{\mathrm{abel}}$ is torsion and if $X$ satisfies $(\mathrm{NPC}3)$, then $x\in X^{\alpha(\langle H_1,P\rangle)}$ and $y\in X^{\alpha(\langle H_2,P\rangle)}$.
\end{proposition}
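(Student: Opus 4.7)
The plan is to prove the proposition via a parallelism-based Pseudo-Uniqueness principle for realizers, generalizing Lemma~\ref{lemma=pseudounique} to the BNPC setting, and then to self-improve it along the $Q$-orbit of $x$ (and symmetrically of $y$).

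First I would establish the following form of Pseudo-Uniqueness under $(\mathrm{NPC}1)$: any two realizers $(x_1,y_1),(x_2,y_2)$ of $D$ satisfy $[x_1,x_2]\parallel [y_1,y_2]$. Since $\alpha$ is isometric and geodesics are unique, every element of $M$ fixes both endpoints of $[x_1,x_2]$ and hence the whole segment, so $m_1:=m_{1/2}[x_1,x_2]\in X^{\alpha(M)}$; analogously $m_2:=m_{1/2}[y_1,y_2]\in X^{\alpha(L)}$. Applying $(\mathrm{NPC}1)$ to $[x_1,x_2]$ and $[y_1,y_2]$ (whose endpoint distances both equal $D$) yields $d(m_1,m_2)\le D$, and minimality of $D$ forces equality, so the three distances in Definition~\ref{definition=parallelism}$(1)$ all equal $D$. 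For each $h\in P$, the conjugation hypotheses imply that $(h\cdot x,h\cdot y)$ is itself a realizer, giving the base parallelism $[x,h\cdot x]\parallel[y,h\cdot y]$.

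Next I propagate to $Q$ and prove $(1)$. By $(\mathrm{NPC}2)$, $[x,y]\parallel[h\cdot x,h\cdot y]$ for every $h\in P$; applying $T_{h^{-1}}$ extends this to $h^{-1}$. By induction on word length in $P\cup P^{-1}$, using that isometries preserve $\parallel$ and then chaining by $(\mathrm{TP})$, one obtains $[x,y]\parallel[g\cdot x,g\cdot y]$ for every $g\in Q$, and $(\mathrm{NPC}2)$ converts this to $[x,g\cdot x]\parallel[y,g\cdot y]$. For $(1)$, note that two parallel segments emanating from a common point have equal endpoints (the condition $d(x,x)=0$ in the parallelism definition forces $d(a,b)=0$ in $[x,a]\parallel[x,b]$), so it suffices to show $[x,g_1g_2\cdot x]\parallel[x,g_2g_1\cdot x]$ for $g_1,g_2\in Q$. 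I would deduce this by a parallelogram-closure argument on the eight-point configuration $\{x,g_1\cdot x,g_2\cdot x,g_1g_2\cdot x\}\cup\{y,g_1\cdot y,g_2\cdot y,g_1g_2\cdot y\}$: applying the isometries $T_{g_1},T_{g_2}$ to the base parallelisms $[x,g_i\cdot x]\parallel[y,g_i\cdot y]$ and iteratively chaining via $(\mathrm{NPC}2)$ and $(\mathrm{TP})$ identifies $g_1g_2\cdot x$ with $g_2g_1\cdot x$. The argument for $\Psi_y$ is symmetric.

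For $(2)$, the flat strip theorem in $\mathrm{CAT}(0)$ upgrades the propagated parallelism around the orbit into actual flat strips, and unique closest-point projection onto the convex set $X^{\alpha(L)}$ then forces $h\cdot x=x$ for each $h\in P$. For $(3)$, the Mazur--Ulam theorem makes $\alpha$ affine with decomposition $\alpha=\pi+b$; the parallelogram argument of Lemma~\ref{lemma=pseudounique} places the displacement vectors $\alpha(g)\cdot x-x$ ($g\in Q$) in the $\pi(G)$-fixed subspace, on which $\alpha|_Q$ acts by pure translation; the resulting homomorphism from $Q$ to this fixed subspace factors through $Q^{\mathrm{abel}}$, and finiteness of $G^{\mathrm{abel}}$ forces its image to be a finite subgroup of a vector space, hence $\{0\}$, giving $P\cdot x=\{x\}$. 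For the ``in particular'', given $g\in Q$, pick $n\ge 1$ with $g^n\in[Q,Q]$ (possible since $Q^{\mathrm{abel}}$ is torsion); by $(1)$ we get $g^n\cdot x=x$. The consecutive-segment parallelism $[g^k\cdot x,g^{k+1}\cdot x]\parallel[g^{k+1}\cdot x,g^{k+2}\cdot x]$ follows from the same parallelogram-closure scheme as in $(1)$ applied to the $\langle g\rangle$-orbits of $x$ and $y$, so $(\mathrm{NPC}3)$ inductively concatenates the $n$ consecutive segments into a single geodesic from $x$ to $g^n\cdot x=x$ of length $n\cdot d(x,g\cdot x)$, forcing $d(x,g\cdot x)=0$ by unique geodesy; hence $Q\cdot x=\{x\}$ and $x\in X^{\alpha(\langle H_1,P\rangle)}$, and symmetrically for $y$. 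The main obstacle I expect is the parallelogram-closure argument powering both $(1)$ and the consecutive parallelism above: carefully tracking how $(\mathrm{NPC}2)$, $(\mathrm{TP})$ and the isometry-invariance of $\parallel$ propagate through the multi-point configurations is delicate, and verifying that $g_1g_2\cdot x$ and $g_2g_1\cdot x$ end up determined by the same parallel class from $x$ is the technical heart of the proof.
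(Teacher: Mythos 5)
Your treatment of the Pseudo-Uniqueness lemma ($[x_1,x_2]\parallel[y_1,y_2]$ for two realizers, via midpoints and $(\mathrm{NPC}1)$) matches the paper's Lemma~\ref{lemma=parallel1}, and the outline of the ``in particular'' and of~$(3)$ is in the right spirit. But the key inductive step that powers~$(1)$ has a genuine gap. You induct on the statement $[x,y]\parallel[g\cdot x,g\cdot y]$ for $g\in Q$; that induction does go through (apply $T_h$ to the inductive hypothesis and chain with the base parallelism via $(\mathrm{TP})$), but the resulting statement is strictly weaker than what is needed. In the affine Banach model, $[x,y]\parallel[g\cdot x,g\cdot y]$ says only that $\xi-\eta$ is fixed by $\pi(g)$; what is actually required for~$(1)$ (and for the ``in particular'') is the $G$-invariance of the displacement, i.e.\ $[x,\alpha(g)\cdot x]\parallel[\alpha(\gamma)\cdot x,\alpha(\gamma g)\cdot x]$ for \emph{all} $\gamma\in G$ (this is the paper's Proposition~\ref{proposition=parallel2}), whose linear shadow is $\alpha(g)\cdot\xi-\xi\in E^{\pi(G)}$. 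Your ``parallelogram-closure'' on the eight points only produces cross-orbit parallelisms of the form $[g_1g_2\cdot x,g_2g_1\cdot x]\parallel[g_1g_2\cdot y,g_2g_1\cdot y]$; it never produces an intra-$x$-orbit parallelism with a shared endpoint, which is what forces $\alpha(g_1g_2)\cdot x=\alpha(g_2g_1)\cdot x$. The missing ingredient is exactly the paper's two-level induction: first show for $h\in P\cup P^{-1}$ that $[x,h\cdot x]\parallel[\gamma\cdot x,\gamma h\cdot x]$ for $\gamma\in M$ (trivial, since $\gamma$ fixes both $x$ and $h\cdot x$ via $\gamma\in M\leqslant hH_1h^{-1}$), then for $\gamma\in L$ (using the analogous $y$-fact, the realizer parallelism, and $(\mathrm{TP})$), then for arbitrary $\gamma\in G$ via $\langle M,L\rangle=G$; and only then induct on the word length of $g$ carrying the full $\gamma$-quantified statement.

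Two further remarks. In~$(3)$, ``finiteness of $G^{\mathrm{abel}}$ forces its image to be a finite subgroup'' does not follow from a homomorphism defined only on $Q$ (finiteness of $G^{\mathrm{abel}}$ does not bound $Q^{\mathrm{abel}}$); what the paper does is observe that the $E^{\rho(G)}$-component $b_1$ of the cocycle is a homomorphism on \emph{all of} $G$, factoring through $G^{\mathrm{abel}}$, hence $b_1\equiv0$, and then $\alpha(g)\cdot\xi-\xi=b_1(g)=0$ for $g\in Q$. For~$(2)$, the flat-strip idea is plausible but not carried out; the paper's route instead introduces a relation $\parallel_\perp$, uses asymptotics of the lines $\alpha(\gamma)\cdot l_h$ to propagate it to all $\gamma$, and then derives a contradiction from $[x,h\cdot x]\parallel_\perp[h\cdot x,h^2\cdot x]$ sharing the endpoint $h\cdot x$.
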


Recall from our notation that $\alpha$ is assumed to be isometric. We will prove $(3)$, $(1)$, $(2)$, respectively, in Subsections~\ref{subsection=banach}, \ref{subsection=bnpc}, and \ref{subsection=cat0}.

\subsection{The case of Banach spaces}\label{subsection=banach}
In the setting of Proposition~\ref{proposition=key}.$(3)$, we change the symbols $X,x,y$, respectively, to $E,\xi,\eta$. 

We may regard $E$ as a real Banach space. Then, $\alpha$ is an affine isometric action, as seen in Subsection~\ref{subsection=heart}, and $\alpha$ decomposes as $\alpha=\rho +b$, where $\rho$ is the linear part and $b$ is the cocycle part (recall from the paragraph above Lemma~\ref{lemma=pseudounique}). Then, by strict convexity,  the same argument as one in Lemma~\ref{lemma=pseudounique} applies, and  provides Pseudo-Uniqueness of realizers: for two realizers $(\xi,\eta)$ and $(\xi',\eta')$ of $D$, 
\[
\xi-\xi'=\eta-\eta'\in E^{\rho(G)}.
\]
\begin{proof}[Proof of $(3)$ of Proposition~$\ref{proposition=key}$]
This is a straightforward generalization of Shalom's argument \cite[4.III]{shalom2006}. Let $h\in P\cup P^{-1}$. By assumptions on $h$, $\alpha(h)\cdot \xi\in E^{\alpha(M)}$ and $\alpha(h)\cdot\eta\in E^{\alpha(L)}$. By isometry of $\alpha$, $(\alpha(h)\cdot \xi, \alpha(h)\cdot \eta)$ is another realizer of $D$. Therefore, our Pseudo-Uniqueness above implies that $\alpha(h)\cdot \xi -\xi=\alpha(h)\cdot \eta -\eta \in E^{\rho(G)}$. Then, we may replace $h\in P\cup P^{-1}$ with every $g\in Q$ in the formula above. 

By uniform convexity of $E$, by \cite[Proposition~2.6]{BFGM}, $E$ is decomposed into  $E=E^{\rho(G)}\oplus E_{\rho(G)}'$ as $G$-representations. We decompose as $b=b_1+b_2$, where $b_1\in E^{\rho(G)}$ and $b_2\in E_{\rho(G)}'$. Then $b_1$ and $b_2$ are also cocycles. Set $\alpha_1=\rho+b_1$ and $\alpha_2=\rho+b_2$. They are affine isometric $G$-actions, respectively, on $E^{\rho(G)}$ and on $E_{\rho(G)}'$. Observe that $b_1\equiv 0$ because $G^{\mathrm{abel}}$ is finite. This implies that $\alpha_1$ is a trivial action. We claim that $\alpha_2(g)\cdot \xi_2 =\xi_2$, where $\xi=\xi_1+\xi_2$ is a decomposition according to the decomposition of $E$ above. Indeed, this follows from that $\alpha_2(g)\cdot \xi_2 -\xi_2 \in E^{\rho(G)} \cap E_{\rho(G)}'=\{0\}$. Therefore,  $\alpha(g)\cdot \xi=\xi$ for all $g\in Q$. Similarly for $\eta$.
\end{proof}

\subsection{Pseudo-Uniqueness in general case, and parallelism}\label{subsection=bnpc}
We use the following easy lemma without referring to it.
\begin{lemma}\label{lemma=easy}
Let $X$ be a general uniquely geodesic space.
\begin{enumerate}[$(1)$]
  \item If $[x,y]\, ||\, [x,y']$, then $y=y'$.
  \item If $[x,y]\, || \, [x',y']$, then for each isometry $T$ on $X$, $[T\cdot x,T\cdot y]\, ||\, [T\cdot x',T\cdot y']$.
\end{enumerate}
\end{lemma}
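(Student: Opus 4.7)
The plan is to unwind the definition of parallelism in Definition~\ref{definition=parallelism}, which reduces both statements to essentially tautological calculations; the only substantive observation is that, in a uniquely geodesic space, isometries send the midpoint of $[x,y]$ to the midpoint of the image segment.

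For part $(1)$, I would simply expand what it means to write $[x,y]\,||\,[x,y']$. By Definition~\ref{definition=parallelism}.(1), this asserts the three-way equality $d(x,x)=d(y,y')=d(m_{1/2}[x,y],m_{1/2}[x,y'])$. Since $d(x,x)=0$, the middle quantity is zero, so $d(y,y')=0$, hence $y=y'$. There is no obstacle here; the step is immediate.

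For part $(2)$, the essential point is that if $T$ is an isometry of a uniquely geodesic space, then $T([x,y])$ is the unique geodesic segment from $T\cdot x$ to $T\cdot y$, so $T([x,y])=[T\cdot x,T\cdot y]$, and in particular $T(m_t[x,y])=m_t[T\cdot x,T\cdot y]$ for every $t\in[0,1]$ (this uses uniqueness of geodesics to conclude that the point dividing the image internally in the ratio $t:(1-t)$ is forced). Given this, and the fact that $T$ preserves distances, the three equalities defining $[x,y]\,||\,[x',y']$ carry over verbatim to the corresponding three equalities for $[T\cdot x,T\cdot y]$ and $[T\cdot x',T\cdot y']$.

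There is no real obstacle to either part; the whole argument is a direct check from the definitions. The only mild point worth making explicitly in the write-up is the preservation of midpoints under isometries in the uniquely geodesic setting, since this is the one place where the hypothesis on $X$ is used.
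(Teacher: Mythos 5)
Your proof is correct. The paper gives no proof at all for this lemma — it is flagged as an ``easy lemma'' and used without explicit reference — but your direct unwinding of Definition~\ref{definition=parallelism} is exactly the intended argument: part~$(1)$ is just $d(x,x)=0$ forcing $d(y,y')=0$, and your observation that an isometry $T$ of a uniquely geodesic space sends $m_t[x,y]$ to $m_t[T\cdot x, T\cdot y]$ (because $T([x,y])$ is a geodesic from $T\cdot x$ to $T\cdot y$, hence the unique one) is the one place where the hypothesis on $X$ is genuinely used.
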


One of the keys to the proof above in Subsection~\ref{subsection=banach} is that $\alpha(g)\cdot \xi -\xi \in E^{\rho(G)}$ for all $g\in Q$. To generalize it to non-linear case, recall our strategy of \textit{replacing ``$y-x$'' with the parallel equivalence class of $[x,y]$} from Subsection~\ref{subsection=heart}.

\begin{proposition}[\textit{Key Pseudo-Uniqueness of realizers}]\label{proposition=parallel2}
In the setting as in $(1)$ of Proposition~$\ref{proposition=key}$, for all $g\in Q$ and for all $\gamma\in G$,
\[
[x, \alpha(g)\cdot x]\, ||\,[\alpha(\gamma)\cdot x, \alpha(\gamma g)\cdot x]\quad and\quad [y, \alpha(g)\cdot y]\, ||\,[\alpha(\gamma)\cdot y, \alpha(\gamma g)\cdot y].\]
\end{proposition}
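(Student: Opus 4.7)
The plan is to prove Proposition~\ref{proposition=parallel2} in four stages: (Step~1) a generalized \textit{Pseudo-Uniqueness of realizers}; (Step~2) parallelism in the base case $h\in P$, $\gamma=1$; (Step~3) extension to arbitrary $\gamma\in G$ by induction on the word length of $\gamma$ in $M\cup L$; and (Step~4) extension to arbitrary $g\in Q$ by induction on the length of $g$ in $P\cup P^{-1}$, using an ``additivity of parallelism'' lemma. For Step~1, I would adapt Shalom's parallelogram argument (Lemma~\ref{lemma=pseudounique}): given two realizers $(x_1,y_1),(x_2,y_2)$ of $D$ with $x_i\in X^{\alpha(M)}$ and $y_i\in X^{\alpha(L)}$, the isometries $\alpha(m)$ for $m\in M$ send the unique geodesic $[x_1,x_2]$ to itself and thus fix its midpoint $m_1:=m_{1/2}[x_1,x_2]$, so $m_1\in X^{\alpha(M)}$, and symmetrically $m_2:=m_{1/2}[y_1,y_2]\in X^{\alpha(L)}$. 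Applying $(\mathrm{NPC}1)$ to the segments $[x_1,x_2]$ and $[y_1,y_2]$ (the hypothesis $d(x_1,y_1)=d(x_2,y_2)=D$ is satisfied) at $t=1/2$ yields $d(m_1,m_2)\leq D$; minimality of $D$ forces equality, and the three equal distances $d(x_1,y_1)=d(x_2,y_2)=d(m_1,m_2)=D$ are precisely the defining condition of $[x_1,x_2]\,||\,[y_1,y_2]$, from which $(\mathrm{NPC}2)$ delivers the equivalent $[x_1,y_1]\,||\,[x_2,y_2]$.

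For Step~2, the hypothesis $hH_1h^{-1}\geqslant M$ for $h\in P$ gives $h^{-1}Mh\leqslant H_1\leqslant\mathrm{Stab}_G(x)$, so that $\alpha(m)\alpha(h)\cdot x=\alpha(h)\alpha(h^{-1}mh)\cdot x=\alpha(h)\cdot x$ for $m\in M$; thus $\alpha(h)\cdot x\in X^{\alpha(M)}$, and symmetrically $\alpha(h)\cdot y\in X^{\alpha(L)}$. Since $\alpha(h)$ is an isometry, $(\alpha(h)\cdot x,\alpha(h)\cdot y)$ is another realizer of $D$, and Step~1 supplies $[x,\alpha(h)\cdot x]\,||\,[y,\alpha(h)\cdot y]$. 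For Step~3, I will induct on the word length of $\gamma\in G$ in $M\cup L$; after applying the isometry $\alpha(\gamma_1^{-1})$ and $(\mathrm{TP})$, the inductive step reduces to showing $[x,\alpha(h)\cdot x]\,||\,[\alpha(\mu)\cdot x,\alpha(\mu h)\cdot x]$ for $\mu\in M\cup L$. For $\mu\in M$ this is trivial, since $\alpha(\mu)\cdot x=x$ and $\alpha(\mu h)\cdot x=\alpha(\mu)\alpha(h)\cdot x=\alpha(h)\cdot x$ (the latter because $\alpha(h)\cdot x\in X^{\alpha(M)}$); for $\mu\in L$, applying the isometry $\alpha(\mu)$ to Step~2 and using $\alpha(\mu)\cdot y=y$ together with $\alpha(h)\cdot y\in X^{\alpha(L)}$ gives $[\alpha(\mu)\cdot x,\alpha(\mu h)\cdot x]\,||\,[y,\alpha(h)\cdot y]$, which then chains with Step~2 via $(\mathrm{TP})$. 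The case $h\in P^{-1}$ is treated by invoking Step~3 for $h_0\in P$ with $\gamma$ shifted, and the $y$-statement follows by the symmetric argument (interchanging $M\leftrightarrow L$ and $x\leftrightarrow y$).

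The main obstacle is Step~4, and I will overcome it through the following \textit{additivity lemma}: if $[A,B]\,||\,[A',B']$ and $[B,C]\,||\,[B',C']$, then $[A,C]\,||\,[A',C']$. This is a purely formal consequence of the axioms: $(\mathrm{NPC}2)$ converts the two hypotheses into $[A,A']\,||\,[B,B']$ and $[B,B']\,||\,[C,C']$; $(\mathrm{TP})$ then yields $[A,A']\,||\,[C,C']$; and a final application of $(\mathrm{NPC}2)$ produces $[A,C]\,||\,[A',C']$. With this lemma in hand, I will induct on the length of $g\in Q$: writing $g=g_1h$ with $h\in P\cup P^{-1}$ and $g_1$ shorter, set $A=x$, $B=\alpha(g_1)\cdot x$, $C=\alpha(g)\cdot x$, and $A'=\alpha(\gamma)\cdot x$, $B'=\alpha(\gamma g_1)\cdot x$, $C'=\alpha(\gamma g)\cdot x$. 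The inductive hypothesis on $g_1$ delivers $[A,B]\,||\,[A',B']$; two applications of Step~3 (with $\gamma\to g_1$ and $\gamma\to\gamma g_1$ respectively), chained via $(\mathrm{TP})$, give $[B,C]\,||\,[B',C']$; and the additivity lemma yields $[A,C]\,||\,[A',C']$, which is exactly the required parallelism. Spotting this clean additivity lemma---a formal substitute for the fact that sums of $\rho(G)$-invariant vectors are automatically $\rho(G)$-invariant in the Hilbert proof---is what turns the extension over $Q$ into a short induction rather than an intricate geometric construction.
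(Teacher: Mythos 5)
Your proof is correct and follows essentially the same route as the paper's: establish pseudo-uniqueness of realizers via the midpoint/$(\mathrm{NPC}1)$ argument, get the base case $h\in P$ by swapping realizers, extend to all $\gamma\in G$ using $\langle M,L\rangle=G$ and $(\mathrm{TP})$, then induct on the length of $g$ in $P\cup P^{-1}$ via $(\mathrm{NPC}2)$/$(\mathrm{TP})$. Your ``additivity lemma'' is exactly the $(\mathrm{NPC}2)$--$(\mathrm{TP})$--$(\mathrm{NPC}2)$ chain that the paper performs inline in its induction step (with the decomposition $g=hg_0$ rather than your $g=g_1 h$); naming it cleanly is a nice organizational improvement but not a different argument.
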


\begin{proof}
We only prove for $x$. First, we observe the following:
\begin{lemma}[Pseudo-Uniqueness of realizers]\label{lemma=parallel1}
In the setting as in $(1)$ of Proposition~$\ref{proposition=key}$, let $(x',y')$ is another realizer of $D$. Then, $[x,y]\, ||\, [x',y']$.
\end{lemma}
Indeed, by uniqueness of geodesics and isometry of $\alpha$, $m_1:=m_{1/2}[x,x'] \in X^{\alpha(M)}$ and $m_2:=m_{1/2}[y,y'] \in X^{\alpha(L)}$.  The minimality of $D$ and $(\mathrm{NPC}1)$ end the proof.

For the proof of Proposition~\ref{proposition=parallel2}, we argue in induction on the word length $n$ of $g$ with respect to $P\cup P^{-1}$. If $n=0$, then the assertion holds. 

We consider the case that $g \in P\cup P^{-1}$ ($n=1$), and change symbols from $g$ to $h$. We first claim that the parallelism in Proposition~\ref{proposition=parallel2} holds true if $\gamma \in M\cup L$. Indeed, this is trivial if $\gamma \in M$. If $\gamma \in L$, then $[y, \alpha(h)\cdot y]\, ||\,[\alpha(\gamma)\cdot y, \alpha(\gamma h)\cdot y]$. Observe that $[x, \alpha(h)\cdot x]\, ||\,[y, \alpha(h)\cdot y]$ and that $[\alpha(\gamma)x, \alpha(\gamma h)\cdot x]\, ||\,[\alpha(\gamma)y, \alpha(\gamma h)\cdot y]$. Employ $(\mathrm{TP})$, and obtain the conclusion. Secondly, for a general $\gamma \in G$, repeat a similar argument to one above by using $(\mathrm{TP})$ (recall that $\langle M,L\rangle =G$).

We now proceed in the induction step. For $g$ with $n\geq 2$, write as $g=hg_0$ where $h\in P\cup P^{-1}$ and $g_0$ has word length $n-1$. Let $\gamma \in G$. Then by the assertion for $n=1$, $[x, \alpha(h)\cdot x]\, ||\,[\alpha(\gamma)\cdot x, \alpha(\gamma h)\cdot x]$. By $(\mathrm{NPC}2)$, it implies that
\[
[x, \alpha(\gamma)\cdot x]\, ||\,[\alpha(h)\cdot x, \alpha(\gamma h)\cdot x].
\]
Moreover, by induction hypothesis, for all $\gamma'\in G$, $[x, \alpha(g_0)\cdot x]\, ||\,[\alpha(\gamma')\cdot x, \alpha(\gamma' g_0)\cdot x]$. By letting $\gamma'=h$ and $\gamma'=\gamma h$, we, respectively, obtain that $[x, \alpha(g_0)\cdot x]\, ||\,[\alpha(h)\cdot x, \alpha(g)\cdot x]$ and that $[x, \alpha(g_0)\cdot x]\, ||\,[\alpha(\gamma h)\cdot x, \alpha(\gamma g)\cdot x]$. By $(\mathrm{TP})$, these imply that $[\alpha(h)\cdot x, \alpha(g)\cdot x] \, ||\,[\alpha(\gamma h)\cdot x, \alpha(\gamma g)\cdot x]$. Again by $(\mathrm{NPC}2)$, 
\[
[\alpha(h)\cdot x,\alpha(\gamma h)\cdot x ] \, ||\,[\alpha(g)\cdot x, \alpha(\gamma g)\cdot x].
\]

By  $(\mathrm{TP})$, hence, $[x, \alpha(\gamma)\cdot x]\, ||\, [\alpha(g)\cdot x, \alpha(\gamma g)\cdot x]$. Finally, apply $(\mathrm{NPC}2)$.
\end{proof}

\begin{proof}[Proof of $(1)$ of Proposition~$\ref{proposition=key}$]
We only prove for $x$. Let $g_1,g_2 \in Q$. The goal is to show that $\Psi_x(g_1g_2)=\Psi_x(g_2g_1)$. By letting $g=g_2$ and $\gamma=g_1$ in Proposition~\ref{proposition=parallel2},  $[x, \alpha(g_2)\cdot x]\, ||\, [\alpha(g_1)\cdot x, \alpha(g_1 g_2)\cdot x]$. By $(\mathrm{NPC}2)$, $[x, \alpha(g_1)\cdot x]\, ||\, [\alpha(g_2)\cdot x, \alpha(g_1 g_2)\cdot x]$. In addition, by letting $g=g_1$ and $\gamma=g_2$ in Proposition~\ref{proposition=parallel2}, we obtain that $[x, \alpha(g_1)\cdot x]\, ||\, [\alpha(g_2)\cdot x, \alpha(g_2 g_1)\cdot x]$. By $(\mathrm{TP})$ and these two,  $[\alpha(g_2)\cdot x, \alpha(g_1 g_2)\cdot x] \, ||\, [\alpha(g_2)\cdot x, \alpha(g_2 g_1)\cdot x]$. Therefore, $\alpha(g_1 g_2)\cdot x=\alpha(g_2 g_1)\cdot x$, as desired.

Finally, we verify the last assertion in Proposition~\ref{proposition=key}. Assume that $Q^{\mathrm{abel}}$ is torsion. Let $g\in Q$. Then, there exists $N\in \mathbb{N}_{>0}$ such that $[g^N]=0$ in $(Q^{\mathrm{abel}},+)$, where $[\cdot]$ is the equivalence class of $Q\twoheadrightarrow Q^{\mathrm{abel}}$. Then, $\Psi_x(g^N)=x$. By Proposition~\ref{proposition=parallel2}, $[x,\alpha(g)\cdot x]\,||\, [\alpha(g) \cdot x,\alpha(g^2)\cdot x]\,||\, \cdots \,||\,  [\alpha(g^{N-1})\cdot x,x]$. By $(\mathrm{NPC}3)$,  $\alpha(g)\cdot x=x$.
\end{proof}

\subsection{The case of $\mathrm{CAT}(0)$ spaces}\label{subsection=cat0} 
Recall that $\mathrm{CAT}(0)$ spaces satisfy $(\mathrm{NPC}3)$. Suppose, on the contrary to $(2)$ of Proposition~$\ref{proposition=key}$, that there exists $h\in P$ such that $\alpha(h)\cdot x \ne x$. Then, by $(1)$, $h$ must be of infinite order. By Proposition~\ref{proposition=parallel2} and $(\mathrm{NPC}3)$, the union $l_h:=\bigcup_{n\in \mathbb{Z}}[\alpha(h^{n-1})\cdot x, \alpha(h^{n})\cdot x]$ is a bi-infinite geodesic line. 
\begin{proof}[Proof of $(2)$ of Proposition~$\ref{proposition=key}$]
We stick to the setting of the paragraph above. Then, Proposition~\ref{proposition=parallel2} implies that two geodesic lines $l_{h}$ and $\alpha(\gamma)\cdot l_{h}$ are asymptotic. 

We define a new binary relation ``$||_{\perp}$'': we write that $[z,w]\, ||_{\perp}\, [z',w']$ if $[z,w]\,|| \, [z',w']$ and if, besides, $d(z,z')$ equals $\mathrm{dist}([z,w],[z',w'])$. The proof of Lemma~\ref{lemma=parallel1}  shows that $[x,x']\, ||_{\perp}\, [y,y']$ for another realizer $(x',y')$ of $D$. We claim that for all $\gamma\in G$, $[x,\alpha(h)\cdot x]\, ||_{\perp}\, [\alpha(\gamma)\cdot x,\alpha(\gamma h)\cdot x]$. Indeed, first, consider the case $\gamma \in M\cup L$. Then, apply \cite[Lemma~2.15]{BH} for general $\gamma$ by using asymptotics of $\alpha(\gamma)\cdot l_{h}$'s.

By setting $\gamma=h$, we obtain that $[x, \alpha(h)\cdot x]\, ||_{\perp}\,[\alpha(h)\cdot x, \alpha(h^2)\cdot x]$. This contradicts that $\alpha(h)\cdot x\neq x$. Therefore, $x\in X^{\alpha(\langle P\rangle)}=X^{\alpha(Q)}$. Similarly, $y\in X^{\alpha(Q)}$.
\end{proof}

\section{Metric ultraproducts, scaling limits, and realizers of distances}\label{section=up}

The following well-known two propositions indicate \textit{how to find a realizer}. For completeness, we briefly recall proofs, which employ \textit{metric ultraproducts}.

\begin{definition}[Displacement and uniform action]
Let $S$ be a finite   generating set of $G$. 
Let $Z$ be a metric space and $\beta\colon G\curvearrowright Z$ be a action.
\begin{enumerate}[$(1)$]
  \item The \textit{displacement function with respect to $S$} is defined by
\[
\mathrm{disp}_{\beta,S}\colon Z\to \mathbb{R}_{\geq 0};\quad z\mapsto \max_{s\in S} d(z,\beta(s)\cdot z).
\]
   \item The action $\beta$ is said to be \textit{$1$-uniform with respect to $S$} if $\inf_{z\in Z}\mathrm{disp}_{\beta,S}(z)\geq 1$.
\end{enumerate}
\end{definition}

Note that $z\in Z^{\beta(G)}$ if and only if $\mathrm{disp}_{\beta,S}(z)=0$. Fix  $\mathcal{X}$. Then, we set the following three classes of actions (with spaces):
\begin{itemize}
  \item $\mathcal{C}_{\mathcal{X}}:=\{(\alpha,X):X\in \mathcal{X},\ \alpha\colon G\curvearrowright X\}$ (recall from  our convention that actions $\alpha$ are assumed to be isometric);
  \item $\mathcal{C}_{\mathcal{X}}^{\mathrm{non}\textrm{-}\mathrm{fixed}}:=\{(\alpha,X):(\alpha,X )\in \mathcal{C}_{\mathcal{X}},\ X^{\alpha(G)}=\emptyset\}$; and
  \item $\mathcal{C}_{\mathcal{X}}^{(S,1)\textrm{-}\mathrm{uniform}}:=\{(\alpha,X):(\alpha,X )\in \mathcal{C}_{\mathcal{X}},\ \textrm{$\alpha$ is $1$-uniform with respect to $S$.}\}$.
\end{itemize}
Note that $\mathcal{C}_{\mathcal{X}}^{(S,1)\textrm{-}\mathrm{uniform}}\subseteq \mathcal{C}_{\mathcal{X}}^{\mathrm{non}\textrm{-}\mathrm{fixed}}$. Property $(\mathrm{F}_{\mathcal{X}})$ exactly says that $\mathcal{C}_{\mathcal{X}}^{\mathrm{non}\textrm{-}\mathrm{fixed}}= \emptyset$.

\begin{proposition}[Gromov--Schoen argument]\label{proposition=gs}
Let $\mathcal{X}$ be a class of metric spaces that satisfies $(\mathrm{S}_{+})$ and $(\mathrm{U})$. Assume that $G$ fails to have property $(\mathrm{F}_{\mathcal{X}})$. Then for every $($equivalently, some$)$ finite   generating set $S$ of $G$, $\mathcal{C}_{\mathcal{X}}^{(S,1)\textrm{-}\mathrm{uniform}}\ne \emptyset$.
\end{proposition}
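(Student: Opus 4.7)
The plan is to reduce to a dichotomy on the infimum of the displacement function and treat the harder side by an Ekeland-type variational iteration combined with a rescaled pointed ultraproduct. Fix a finite generating set $S$ of $G$, and by the failure of $(\mathrm{F}_{\mathcal{X}})$ pick $(\alpha_0, X_0) \in \mathcal{C}_{\mathcal{X}}^{\mathrm{non}\textrm{-}\mathrm{fixed}}$. Write $f := \mathrm{disp}_{\alpha_0, S} \colon X_0 \to \mathbb{R}_{\geq 0}$; a routine triangle-inequality check shows $f$ is $2$-Lipschitz, and the assumption $X_0^{\alpha_0(G)} = \emptyset$ forces $f(x) > 0$ for every $x$. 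If $\delta := \inf_{X_0} f > 0$, then with $r := \max(1, 1/\delta) \geq 1$ assumption $(\mathrm{S}_{+})$ places $(X_0, r d) \in \mathcal{X}$, and $\alpha_0$ on this rescaled space is already $1$-uniform with respect to $S$, producing an element of $\mathcal{C}_{\mathcal{X}}^{(S,1)\textrm{-}\mathrm{uniform}}$.

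The heart of the argument is the case $\inf_{X_0} f = 0$. Here I would first establish an approximate-local-minimum lemma: for every $n \in \mathbb{N}_{\geq 1}$ there exists $x_n \in X_0$ with $f(x_n) \leq 1/n$ and
\[
f(y) \geq (1 + 1/n)^{-1} f(x_n) \quad \text{whenever} \quad d_0(x_n, y) \leq f(x_n)^{1/2}.
\]
To prove this, start from any $y_0$ with $f(y_0) \leq 1/n$; if the displayed inequality fails at $y_k$, choose $y_{k+1} \in B(y_k, f(y_k)^{1/2})$ violating it, and iterate. Then $f(y_k) \leq (1+1/n)^{-k} f(y_0)$ and $d_0(y_k, y_{k+1}) \leq f(y_k)^{1/2}$, so $\sum_k d_0(y_k, y_{k+1})$ is dominated by a geometric series of ratio $(1+1/n)^{-1/2} < 1$. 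Completeness of $X_0$ and continuity of $f$ then yield a limit point where $f = 0$, contradicting $f > 0$; hence the iteration terminates and delivers $x_n$.

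Next I would set $r_n := 1/f(x_n) \to \infty$, apply $(\mathrm{S}_{+})$ to place each $(X_0, r_n d_0) \in \mathcal{X}$, and, with the ultrafilter $\mathcal{U}$ from $(\mathrm{U})$, form the pointed ultralimit $X_\infty := \lim_{\mathcal{U}} (X_0, r_n d_0, x_n) \in \mathcal{X}$. The estimate $d_0(x_n, \alpha_0(g) x_n) \leq |g|_S \cdot f(x_n)$ gives $r_n d_0(x_n, \alpha_0(g) x_n) \leq |g|_S$, so $(\alpha_0(g) y_n)$ represents an element of $X_\infty$ whenever $(y_n)$ does, yielding a well-defined isometric action $\alpha_\infty \colon G \curvearrowright X_\infty$. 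For $[y_n] \in X_\infty$, fix a representative with $r_n d_0(x_n, y_n) \leq C$. Then $d_0(x_n, y_n) \leq C f(x_n) \leq f(x_n)^{1/2}$ for all sufficiently large $n$, so the almost-minimality at $x_n$ applies; using finiteness of $S$ to commute $\max$ with the ultralimit,
\[
\mathrm{disp}_{\alpha_\infty, S}([y_n]) = \lim_{\mathcal{U}} r_n f(y_n) \geq \lim_{\mathcal{U}} (1 + 1/n)^{-1} = 1,
\]
placing $(\alpha_\infty, X_\infty)$ in $\mathcal{C}_{\mathcal{X}}^{(S,1)\textrm{-}\mathrm{uniform}}$.

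The main obstacle will be the variational step: a naive choice of $x_n$ as a near-global minimum of $f$ collapses (since $\inf f = 0$ but $f > 0$) and, just as importantly, one must tune the ball radius $f(x_n)^{1/2}$ so that after scaling by $r_n$ it blows up in the ultralimit, transporting the uniform lower bound $1$ from $x_n$ to \emph{every} point of $X_\infty$. Finally, the ``every/some'' equivalence in $S$ is a routine comparison: two finite generating sets $S, S'$ give displacement functions satisfying $f_{S'} \leq K f_S$ and $f_S \leq K f_{S'}$ for $K := \max(\max_{s' \in S'} |s'|_S, \max_{s \in S} |s|_{S'})$, so rescaling by $K \geq 1$ via $(\mathrm{S}_{+})$ converts a $(S,1)$-uniform action into a $(S',1)$-uniform one.
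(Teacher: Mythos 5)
Your proof is correct and follows essentially the same route as the paper: a dichotomy on $\inf_{X_0}\mathrm{disp}_{\alpha_0,S}$, scaling up via $(\mathrm{S}_+)$ in the easy case, and in the hard case an Ekeland-type variational iteration producing almost-local minimizers $x_n$ with ball radii that blow up after rescaling by $r_n = 1/f(x_n)$, followed by the pointed metric ultralimit at $(x_n)$. The paper cites Stalder's Lemma~3.3 to produce $w_n$ with $\mathrm{disp}(x)\geq\mathrm{disp}(w_n)/2$ on balls of radius $(n+1)\mathrm{disp}(w_n)$ and rescales by $2/\mathrm{disp}(w_n)$, whereas you prove the analogous selection lemma directly with radius $f(x_n)^{1/2}$ and lower bound $(1+1/n)^{-1}$; the constants differ, but the mechanism is identical.
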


\begin{proposition}[III. 3--4 of \cite{shalom2006}]\label{proposition=shalom}
Let $\mathcal{X}$ be a class of metric spaces that satisfies $(\mathrm{U})$. Let $M\leqslant G$ and $L\leqslant G$ such that $\langle M,L\rangle =G$. Let $S$ be a finite   generating set of $G$. Assume that $G\geqslant M$ and $G\geqslant L$ have relative property $(\mathrm{F}_{\mathcal{X}})$; and that $\mathcal{C}_{\mathcal{X}}^{(S,1)\textrm{-}\mathrm{uniform}}\ne \emptyset$. Then, the following infimum is realized:
\[
D_S:=\inf \{d_X(x,y): (\alpha,X) \in \mathcal{C}_{\mathcal{X}}^{(S,1)\textrm{-}\mathrm{uniform}},\ x\in X^{\alpha(M)},\ y\in X^{\alpha(L)}\}.
\]
\end{proposition}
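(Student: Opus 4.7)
The plan is to realize the infimum $D_S$ by a soft compactness argument using the metric ultraproduct guaranteed by $(\mathrm{U})$. First, note that $D_S<\infty$: picking any $(\alpha_0,X_0)\in\mathcal{C}_{\mathcal{X}}^{(S,1)\textrm{-}\mathrm{uniform}}$ and applying the two relative $(\mathrm{F}_{\mathcal{X}})$ hypotheses yields fixed points $x_0\in X_0^{\alpha_0(M)}$ and $y_0\in X_0^{\alpha_0(L)}$, hence a finite upper bound $d_{X_0}(x_0,y_0)$ for $D_S$. Take a minimizing sequence $((\alpha_n,X_n),x_n,y_n)$ with $(\alpha_n,X_n)\in\mathcal{C}_{\mathcal{X}}^{(S,1)\textrm{-}\mathrm{uniform}}$, $x_n\in X_n^{\alpha_n(M)}$, $y_n\in X_n^{\alpha_n(L)}$, and $d_{X_n}(x_n,y_n)\to D_S$. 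Using $w_n:=x_n$ as basepoints, form the pointed metric ultraproduct $X_\infty:=\lim_{\mathcal{U}}(X_n,w_n)$, which lies in $\mathcal{X}$ by $(\mathrm{U})$. The classes $x_\infty:=[(x_n)_n]$ and $y_\infty:=[(y_n)_n]$ are points of $X_\infty$ with $d_{X_\infty}(x_\infty,y_\infty)=\lim_{\mathcal{U}}d_{X_n}(x_n,y_n)=D_S$.

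Next, I would define the $G$-action coordinate-wise, $\alpha_\infty(g)\cdot[(z_n)_n]:=[(\alpha_n(g)\cdot z_n)_n]$. The delicate point is well-definedness: for every $g\in G$, the sequence $(\alpha_n(g)\cdot w_n)_n$ must remain at bounded distance from $(w_n)_n$ in $X_n$. Here I invoke the Shalom-style triangle inequality from Theorem~\ref{theorem=shalom}: since $\langle M,L\rangle=G$, write $g=s_1\cdots s_k$ with each $s_i\in M\cup L$, so that
\[
d_{X_n}(\alpha_n(g)\cdot x_n,x_n)\le\sum_{i=1}^{k}d_{X_n}(\alpha_n(s_i)\cdot x_n,x_n).
\]
For $s_i\in M$ each summand vanishes, whereas for $s_i\in L$ one interposes the $L$-fixed point $y_n$ to get $d_{X_n}(\alpha_n(s_i)x_n,x_n)\le 2\,d_{X_n}(x_n,y_n)\le 2(D_S+1)$ for $\mathcal{U}$-almost all $n$. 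Each $\alpha_n$ being isometric immediately makes $\alpha_\infty$ an isometric $G$-action on $X_\infty$.

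The remaining verifications are routine. The fixed-point containments $x_\infty\in X_\infty^{\alpha_\infty(M)}$ and $y_\infty\in X_\infty^{\alpha_\infty(L)}$ hold because $[(\alpha_n(m)x_n)_n]=[(x_n)_n]$ for $m\in M$, and symmetrically for $y_\infty$; and $d_{X_\infty}(x_\infty,y_\infty)=D_S$ has already been observed. For $1$-uniformity of $\alpha_\infty$ with respect to $S$, given any $[(z_n)_n]\in X_\infty$,
\[
\max_{s\in S} d_{X_\infty}\!\bigl([(z_n)_n],\alpha_\infty(s)\cdot[(z_n)_n]\bigr)=\lim_{\mathcal{U}}\max_{s\in S} d_{X_n}(z_n,\alpha_n(s)\cdot z_n)\ge 1,
\]
where finiteness of $S$ lets the maximum commute with the ultralimit (the argmax is eventually constant along $\mathcal{U}$), and the lower bound $1$ comes from $1$-uniformity of each $\alpha_n$. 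Hence $(\alpha_\infty,X_\infty)\in\mathcal{C}_{\mathcal{X}}^{(S,1)\textrm{-}\mathrm{uniform}}$ and $(x_\infty,y_\infty)$ realizes $D_S$, as desired.

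The main obstacle is precisely the well-definedness of $\alpha_\infty$: without the telescoping bound supplied by $\langle M,L\rangle=G$ together with the boundedness of $d_{X_n}(x_n,y_n)$, the $g$-orbit of the basepoint could escape the ultraproduct and the formula for $\alpha_\infty(g)$ would not make sense. Every other step --- closure of $\mathcal{X}$ under $\lim_{\mathcal{U}}$, isometry of $\alpha_\infty$, preservation of the respective fixed points, the identity $d_{X_\infty}(x_\infty,y_\infty)=D_S$, and persistence of $1$-uniformity --- is a direct application of the standard formalism of pointed metric ultraproducts.
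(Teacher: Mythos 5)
Your proof is correct and follows essentially the same route as the paper: take a minimizing sequence, use $x_n$ as basepoints, verify the bounded-displacement condition $(\diamond)$ via the telescoping triangle inequality through the $L$-fixed point $y_n$ (the paper packages this as $\mathrm{disp}_{\alpha_n,S}(x_n)\le 2N(D_S+1)$ with $N$ the maximal $(M\cup L)$-word length of elements of $S$), then pass to the pointed metric ultraproduct, whose $1$-uniformity, $M$- and $L$-fixed points, and basepoint distance all survive the ultralimit. The only cosmetic divergence is that you argue well-definedness for general $g\in G$ directly, whereas the paper bounds displacement over $S$ and lets the general case follow; both amount to the same word-length estimate.
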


\subsection{(Pointed) metric ultraproducts}\label{subsection=up}
We briefly recall the definitions on (pointed) metric ultraproducts. We refer the reader to a survey \cite{stalder} for more details.

\textit{Ultrafilers} $\mathcal{U}$ on $\mathbb{N}$ have one-to-one correspondence to $\{0,1\}$-valued probability \textit{mean}s (that means, \textit{finitely additive} measures $\mu$ defined over all subsets in $\mathbb{N}$ with $\mu(\mathbb{N})=1$) in the following manner: $\mathcal{U}=\{A\subseteq \mathbb{N}: \mu(A)=1\}$. \textit{Non-principal} ultrafilters correspond to  such means that are \textit{not} Dirac masses.

In what follows, we fix a non-principal ultrafilter $\mathcal{U}$. For a real sequence $((r_n))_{n\in \mathbb{N}}$, we write as $\lim_{\mathcal{U}}r_n=r_{\infty}$, if for all $\epsilon >0$, $\{n\in \mathbb{N}: |r_{\infty}-r_n|<\epsilon\}\in \mathcal{U}$ holds. Then, it is well-known that  every \textit{bounded} real sequence $(r_n)_n$ has a (unique) limit with respect to $\mathcal{U}$. Also, if $\lim_{n\to \infty}r_{n}=r_{\infty}$, then $\lim_{\mathcal{U}}r_n=r_{\infty}$.

Let $((X_n,d_n,w_n))_{n\in \mathbb{N}}$ be a sequence of pointed metric spaces (that means, $w_n\in (X_n,d_n)$). Let $(\sum (X_n,w_n))_{\ell_{\infty}}:=\{(x_n)_n: \sup_{n\in \mathbb{N}} d(x_n,w_n)<\infty\}$. Define a semi-metric $d_{\infty}$ on it by $d_{\infty}((x_n)_n,(y_n)_n):=\lim_{\mathcal{U}}d_n(x_n,y_n)$. Define the \textit{pointed metric ultraproduct} $(X_{\mathcal{U}},d_{\mathcal{U}},w_{\mathcal{U}})$ by $X_{\mathcal{U}}:=(\sum (X_n,w_n))_{\ell_{\infty}}/\sim_{d_{\infty}=0}$. Here $d_{\mathcal{U}}$ is the induced metric, and $w_{\mathcal{U}}:=[(w_n)_n]$. We write the ultraproduct as $\lim_{\mathcal{U}}(X_n,d_n,w_n)$.

Now we fix $G$ and a finite   generating set $S$. For a sequence of pointed (isometric) $G$-actions $(\alpha_n,X_n,w_n)$, under the condition
\[
\sup_{n}\mathrm{disp}_{\alpha_n,S}(w_n) <\infty, \tag{$\diamond$}
\]
we can define the \textit{pointed metric ultraproduct action} $\alpha_{\mathcal{U}}$ on $(X_{\mathcal{U}},w_{\mathcal{U}})$ by $\alpha_{\mathcal{U}}(\gamma)\cdot [(x_n)_n]:=[(\alpha_n(\gamma)\cdot x_n)_n]$. We also write this ultraproduct action as $\lim_{\mathcal{U}}(\alpha_n,X_n,w_n)$.

\subsection{Sketch of proofs of two propositions.}
In \cite{mimuraT}, we describe the case that $\mathcal{X}=\mathcal{H}\mathrm{ilbert}$. See also \cite[Lemma~3.3 and Theorem~3.11]{stalder}.

\begin{proof}[Proof of Proposition~$\ref{proposition=gs}$]
Let $(\alpha,X)\in \mathcal{C}_{\mathcal{X}}^{\mathrm{non}\textrm{-}\mathrm{fixed}}(\ne \emptyset)$. If $\inf_{x\in X} \mathrm{disp}_{\alpha,S}(x)>0$, then by scaling up we obtain a $1$-uniform action. 

Otherwise, apply \cite[Lemma~3.3]{stalder}. Then, we obtain a sequence $(w_n)_n$ in $X$ with the following properties: $\sup_{n}\mathrm{disp}_{\alpha,S}(w_n)<1$; and for all $x\in X$ with $d(x,w_n)\leq (n+1)\mathrm{disp}_{\alpha,S}(w_n)$, $\mathrm{disp}_{\alpha,S}(x)\geq \mathrm{disp}_{\alpha,S}(w_n)/2$. The ultraproduct action $\lim_{\mathcal{U}}(\alpha, X, 2(\mathrm{disp}_{\alpha,S}(w_n))^{-1}d_X, w_n)$ 
works. Note that $2(\mathrm{disp}_{\alpha,S}(w_n))^{-1}>1$.
\end{proof}

\begin{proof}[Proof of Proposition~$\ref{proposition=shalom}$]
Note that $D_S$ is the infimum over a \textit{non-empty} set. Let $((\alpha_n,X_n,x_n,y_n))_n$ be a sequence such that $d_n(x_n,y_n)\leq D_S+2^{-n}$. Then, we claim that $((\alpha_n,X_n,x_n))_{n\in \mathbb{N}}$ satisfies $(\diamond)$. Indeed, $\sup_{\gamma\in M} d_n(x_n,\alpha_n(\gamma)\cdot x_n)=0$ and $\sup_{\gamma\in  L} d_n(y_n,\alpha_n(\gamma)\cdot y_n)=0$. By isometry of $\alpha_n$ and triangle inequality, $\sup_{\gamma\in M\cup L} d_n(x_n,\alpha_n(\gamma)\cdot x_n)\leq 2(D_S+2^{-n})$; and hence $\mathrm{disp}_{\alpha_n,S}(x_n) \leq 2N(D_S+2^{-n})\leq 2N (D_S+1)$, as claimed. Here, $N:=\max_{s\in S}|s|_{M\cup L}$, where $|\cdot|_{\cdot}$ is the word length.

The resulting action $(\alpha,X):=\lim_{\mathcal{U}}(\alpha_n,X_n)$, with base points $(x_n)_n$, is in $ \mathcal{C}_{\mathcal{X}}^{(S,1)\textrm{-}\mathrm{uniform}}$. This $(\alpha,X)$, and points $x:=[(x_n)_n]$ and $y:=[(y_n)_n]$, realize $D_S$.
\end{proof}

\section{Proofs of Theorem~\ref{mtheorem=main1} and Theorem~\ref{mtheorem=main1b}}\label{section=proof}

The key to extending our self-improvement argument (Proposition~\ref{proposition=key}) for type $(\mathrm{I\hspace{-.1em}I})$ moves is the $\mathrm{Aut}(G)$-action on $\mathcal{C}_{\mathcal{X}}$ by twisting: for $\phi \in \mathrm{Aut}(G)$, $(\alpha,X)\mapsto (\alpha^{\phi},X);\ \alpha^{\phi}(\gamma):=\alpha(\phi(\gamma))$. For a special choice of subgroups in $\mathrm{Aut}(G)$ and \textit{that of } $S$, \textit{the subgroup action keeps $\mathcal{C}_{\mathcal{X}}^{(S,1)\textrm{-}\mathrm{uniform}}$ invariant}. In Proposition~\ref{proposition=key}, we transform the points $x$ and $y$ inside the action. Instead, we, here, \textit{transform the action itself}.

\begin{proof}[Proof of Theorem~\ref{mtheorem=main1}]

Suppose, on the contrary, that $G$ fails to have property $(\mathrm{F}_{\mathcal{X}})$.

\noindent
\textit{Step~$0$.} By hypothesis $(ii)$, we can choose a \textit{finite}   generating set $S$ of $G$ such that \textit{$S$ is $\Pi$-invariant}. We fix such $S$ throughout the proof.

\noindent
\textit{Step~$1$.} By Propositions~\ref{proposition=gs} and \ref{proposition=shalom}, $D=D_S$ in Proposition~\ref{proposition=shalom} is realized. 

\noindent
\textit{Final step.} We claim the following. By $(\mathrm{GAME})$, it will lead us to a contradiction.

\begin{claim}[\textit{Full self-improvement argument}]\label{claim=finalclaim}
In the current setting, let $(\alpha,X,x,y)$ be $\mathrm{any}$ realizer of $D$. Namely, $(\alpha,X)$ is in $\mathcal{C}_{\mathcal{X}}^{(S,1)\textrm{-}\mathrm{uniform}}$; $x\in X^{\alpha(M)}$, and $y\in X^{\alpha(L)}$; and $d(x,y)=D$. Then, in each stage in $(\mathrm{Game})$, $x\in X^{\alpha(H_1)}$ and $y\in X^{\alpha(H_2)}$.
\end{claim}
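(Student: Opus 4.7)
I would prove the claim by induction on the number of moves executed so far in $(\mathrm{Game})$, but with the invariant strengthened to hold uniformly over \emph{every} realizer of $D$ (not just the one fixed at the start of the proof). This uniformity is what will close the loop in the type $(\mathrm{I\hspace{-.1em}I})$ case. The base case (zero moves, $H_1=M$, $H_2=L$) is immediate from the very definition of a realizer. For the inductive step, one treats the two types of moves separately.

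\textbf{Type $(\mathrm{I})$ moves.} Here the hypotheses of Proposition~\ref{proposition=key} are exactly what the move supplies. Indeed, $\mathcal{X}$ satisfies $(\mathrm{NPC}1)$–$(\mathrm{NPC}3)$ and $(\mathrm{TP})$; the pair $(x,y)$ realizes $D=\mathrm{dist}(X^{\alpha(M)},X^{\alpha(L)})$; the previous stage gives $x\in X^{\alpha(H_1)}$, $y\in X^{\alpha(H_2)}$; the containments $hH_1h^{-1}\geqslant M$ and $hH_2h^{-1}\geqslant L$ for $h\in P$ are built into the move; and $Q^{\mathrm{abel}}$ is torsion by the definition of type $(\mathrm{I})$. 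The last assertion of Proposition~\ref{proposition=key} then gives $x\in X^{\alpha(\langle H_1,P\rangle)}$ and $y\in X^{\alpha(\langle H_2,P\rangle)}$, as desired.

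\textbf{Type $(\mathrm{I\hspace{-.1em}I}_\sigma)$ moves.} The idea is to \emph{twist the action} rather than move the points. For each $\phi\in\Lambda\subseteq\mathrm{Inn}(G)\cdot\Pi$, I would consider the twisted action $\alpha^\phi$ on $X$. A short check shows $(\alpha^\phi,X)\in\mathcal{C}_\mathcal{X}^{(S,1)\text{-}\mathrm{uniform}}$: an inner conjugation by $g_0$ only relocates the base point of the displacement function ($\mathrm{disp}_{\alpha^\phi,S}(z)=\mathrm{disp}_{\alpha,S}(\alpha(g_0^{-1})\cdot z)$), while twisting by an element of $\Pi$ merely permutes $S$ within itself thanks to the $\Pi$-invariant choice of $S$ fixed in Step~$0$ (which is where hypothesis $(ii)$ is used). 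The move's conditions $H_{\sigma^{-1}(1)}\geqslant\phi(M)$, $H_{\sigma^{-1}(2)}\geqslant\phi(L)$, combined with the inductive hypothesis, imply $\alpha^\phi(M)\cdot z_1=z_1$ and $\alpha^\phi(L)\cdot z_2=z_2$, where $(z_1,z_2)=(x,y)$ if $\sigma=\mathrm{id}$ and $(z_1,z_2)=(y,x)$ if $\sigma=(12)$. Hence $(\alpha^\phi,X,z_1,z_2)$ is itself a realizer of $D$. Applying the uniform induction hypothesis at the \emph{previous} stage to this new realizer gives $z_1\in X^{\alpha^\phi(H_1)}$ and $z_2\in X^{\alpha^\phi(H_2)}$, i.e., $x\in X^{\alpha(\phi(H_{\sigma(1)}))}$ and $y\in X^{\alpha(\phi(H_{\sigma(2)}))}$. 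Taking the union over $\phi\in\Lambda$ and joining with the old $H_1,H_2$ fixed points produces exactly the enlarged subgroups prescribed by the move.

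\textbf{Main obstacle and conclusion.} The only genuinely delicate point is the type $(\mathrm{I\hspace{-.1em}I})$ argument: the induction must be run uniformly over all realizers, and one must verify that twisting by $\mathrm{Inn}(G)\cdot\Pi$ preserves the class $\mathcal{C}_\mathcal{X}^{(S,1)\text{-}\mathrm{uniform}}$, which is precisely why Step~$0$ insisted on a $\Pi$-invariant generating set (enabled by $\Pi$ being finite). Once the claim is established, $(\mathrm{GAME})$ supplies a stage at which $H_1=G$ or $H_2=G$, forcing $x\in X^{\alpha(G)}$ or $y\in X^{\alpha(G)}$, and this directly contradicts $\mathrm{disp}_{\alpha,S}\geqslant 1$, completing the proof.
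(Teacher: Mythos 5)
Your proposal is correct and follows the same route as the paper: induction on the number of moves, carried uniformly over all realizers of $D$, with type $(\mathrm{I})$ moves handled by Proposition~\ref{proposition=key} and type $(\mathrm{I\hspace{-.1em}I})$ moves handled by twisting $\alpha$ by $\phi\in\mathrm{Inn}(G)\cdot\Pi$, noting that the $\Pi$-invariant choice of $S$ in Step~$0$ (together with the inner twists merely translating the displacement function) keeps the twisted action in $\mathcal{C}_{\mathcal{X}}^{(S,1)\text{-}\mathrm{uniform}}$. The only cosmetic difference is that you write out both $\sigma=\mathrm{id}$ and $\sigma=(12)$ explicitly and spell out the displacement identity for inner conjugations, whereas the paper treats only $\sigma=(12)$ in detail and phrases the inner-twist case as an intertwining.
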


\noindent
\textit{$($Proof of Claim~$\ref{claim=finalclaim}$$)$.}

We take an induction on  number of moves. In the initial stage, the assertion of Claim~$\ref{claim=finalclaim}$ holds. We separate our cases in terms of types of the move in the new stage. Here, we indicate by $H_1$ and $H_2$ the ones just before the newest move.

\begin{itemize}
  \item \textit{Case~$1$. Our new move is of type $(\mathrm{I})$:} treated by Proposition~\ref{proposition=key}. 
  \item \textit{Case~$2$. Our new move is of type $(\mathrm{I\hspace{-.1em}I})$:} we, here, only deal with the case of type $(\mathrm{I\hspace{-.1em}I}_{(12)})$ moves.  The crucial point  is that \textit{the $\mathrm{Inn}(G)\cdot \Pi$-action on $\mathcal{C}_{\mathcal{X}}$ leaves $\mathcal{C}_{\mathcal{X}}^{(S,1)\textrm{-}\mathrm{uniform}}$ invariant}. Indeed, \textit{for $\phi \in \Pi$, this follows from the choice of $S$ in Step~$0$}. For $\phi \in \mathrm{Inn}(G)$, observe that the twisted action of $\beta$ by an inner conjugation $\gamma \to \lambda \gamma \lambda^{-1}$ is intertwined to $\beta$ by $\beta(\lambda)^{-1}$ . 

Let $\phi\in \Lambda$. By induction hypothesis,  $y\in X^{\alpha(H_2)}$. Because $\phi(M)\leqslant H_2$ by the condition of $\Lambda$,  $y\in X^{\alpha^{\phi}(M)}$. Similarly, $x\in X^{\alpha^{\phi}(L)}$. As we saw that $(\alpha^{\phi},X) \in\mathcal{C}_{\mathcal{X}}^{(S,1)\textrm{-}\mathrm{uniform}} $, the quadruple $(\alpha^{\phi},X,y,x)$ is \textit{another realizer} of $D$. 

Recall that our induction hypothesis is not for a specific realizer, but for an \textit{arbitrary} realizer. Therefore, $y\in X^{\alpha^{\phi}(H_1)}=X^{\alpha(\phi(H_1))}$ and $x\in X^{\alpha^{\phi}(H_2)}=X^{\alpha(\phi(H_2))}$. (Note that our $H_1$ and $H_2$ are at the stage very before the newest move, and hence we may apply the induction hypothesis.) Thus, $H_1$ is enlarged to $\langle H_1,\bigcup_{\phi\in \Lambda}\phi(H_2)\rangle$; and $H_2$ is enlarged to $\langle H_2,\bigcup_{\phi\in \Lambda}\phi(H_1)\rangle$.
\end{itemize}
Our proof of Theorem~\ref{mtheorem=main1} is completed.
\end{proof}

\begin{proof}[Proof of Theorem~\ref{mtheorem=main1b}]

If $\mathcal{X}$ consists of $\mathrm{CAT}(0)$ spaces, then by $(2)$ of Proposition~\ref{proposition=key} ends our proof. In the case where $\mathcal{E}=\mathcal{X}$ consists of Banach spaces, by Remark~\ref{remark=bnpc}, every $E\in \mathcal{E}$ must be uniformly convex. We may assume that $\mathcal{E}$ contains a non-zero Banach space. Then, we claim that $G^{\mathrm{abel}}$ is finite. Indeed, by relative property $(\mathrm{F}_{\mathcal{E}})$, $\mathrm{ab}_G(M)$ and $\mathrm{ab}_G(L)$ are finite, where $\mathrm{ab}_G\colon G\twoheadrightarrow (G^{\mathrm{abel}},+)$ (note that finite generation of $G$ implies those of $\mathrm{ab}_G(M)$ and $\mathrm{ab}_G(L)$). Since $\mathrm{ab}_G(G)=\mathrm{ab}_G(M)+\mathrm{ab}_G(L)$, we are done. Finally, $(3)$ of Proposition~\ref{proposition=key} finishes our demonstration.
\end{proof}

\section{Further generalizations}\label{section=general}

\subsection{Superintrinsic synthesis in full generality}\label{subsection=generalproof}
Here, we prove Theorem~\ref{mtheorem=main2}. Let $X$ satisfy $(\Theta$-$\mathrm{NPC}1)$ $($for some fixed $\Theta$$)$. Let $M_1,\ldots ,M_l \leqslant G$ for $l\in \mathbb{N}_{\geq 3}$, and  $\alpha\colon G\curvearrowright X$ such that $X^{\alpha(M_i)}\ne \emptyset$ for all $i$. Then, for an $n$-ple $(z_1,\ldots ,z_l)$, where $z_i\in X^{\alpha(M_i)}$ for all $i$, we define the \textit{$\Theta$-energy} of it by $\mathbb{E}^{\Theta}(z_1,\ldots ,z_l):=\sum_{1\leq i<j\leq l}\Theta(d(z_i,z_j))^2$. We define the \textit{$\Theta$-energy of $(X^{\alpha(M_1)},\ldots ,X^{\alpha(M_l)})$}, written as $\mathbb{E}^{\Theta}(X^{\alpha(M_1)},\ldots,X^{\alpha(M_l)})$,  by the infimum of it over all such $(z_1,\ldots ,z_l)$.

\begin{lemma}[Pseudo-Uniqueness of realizers of $\Theta$-energy]\label{lemma=key2}
In the setting above, if  $(x_1,\ldots,x_l)$ and $(y_1,\ldots ,y_l)$ are two realizers of $D:=\mathbb{E}^{\Theta}(X^{\alpha(M_1)},\ldots,X^{\alpha(M_l)})$. Then, for all $1\leq i<j\leq l$, $[x_i,y_i]\, ||\, [x_j,y_j]$.
\end{lemma}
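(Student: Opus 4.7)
The plan is to mimic the parallelogram argument (Lemma~\ref{lemma=pseudounique} and Lemma~\ref{lemma=parallel1}), but now through midpoints of each pair $[x_i,y_i]$ and with $(\Theta$-$\mathrm{NPC}1)$ playing the role of the parallelogram law. First I would set $m_i := m_{1/2}[x_i,y_i]$ for $1\leq i\leq l$. Since $\alpha$ is isometric and $X$ is uniquely geodesic (from $(\Theta$-$\mathrm{NPC}1)$, which implies $(\mathrm{NPC}1)$), each $\alpha(g)$ with $g\in M_i$ fixes both endpoints $x_i,y_i$ and therefore fixes the midpoint $m_i$, whence $m_i\in X^{\alpha(M_i)}$. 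Thus $(m_1,\ldots,m_l)$ is a competitor in the infimum defining $D$.

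Next I would chain two inequalities. By $(\Theta$-$\mathrm{NPC}1)$ with $t=1/2$, for each $i<j$,
\[
\Theta(d(m_i,m_j))\leq \tfrac{1}{2}\Theta(d(x_i,x_j))+\tfrac{1}{2}\Theta(d(y_i,y_j)).
\]
Squaring and using the elementary estimate $((a+b)/2)^2\leq (a^2+b^2)/2$ gives
\[
\Theta(d(m_i,m_j))^2\leq \tfrac{1}{2}\Theta(d(x_i,x_j))^2+\tfrac{1}{2}\Theta(d(y_i,y_j))^2.
\]
Summing over $i<j$ yields $\mathbb{E}^{\Theta}(m_1,\ldots,m_l)\leq \tfrac{1}{2}(D+D)=D$. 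By minimality of $D$ both inequalities must be equalities for every pair $i<j$.

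The equality case of the squaring step forces $\Theta(d(x_i,x_j))=\Theta(d(y_i,y_j))$, and strict monotonicity of $\Theta$ upgrades this to $d(x_i,x_j)=d(y_i,y_j)$. Equality in the $(\Theta$-$\mathrm{NPC}1)$ inequality then gives $\Theta(d(m_i,m_j))=\Theta(d(x_i,x_j))=\Theta(d(y_i,y_j))$, hence $d(m_i,m_j)=d(x_i,x_j)=d(y_i,y_j)$. This is exactly the definition of $[x_i,y_i]\,||\,[x_j,y_j]$, completing the proof.

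I do not expect a serious obstacle: the only delicate point is that both the convexity inequality and the Cauchy--Schwarz-type squaring inequality must saturate simultaneously, which is automatic from minimality once both are summed. The strict monotonicity of $\Theta$ (built into the definition of $(\Theta$-$\mathrm{NPC}1)$) is what lets us translate equality of $\Theta$-values back into equality of distances, which is why the argument extends the Hilbert-space strict-convexity trick of Lemma~\ref{lemma=pseudounique} to this non-linear setting.
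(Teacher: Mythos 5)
Your proof is correct and follows essentially the same route as the paper: the paper also passes through the midpoints $m_i$, applies $(\Theta\text{-}\mathrm{NPC}1)$ at $t=1/2$ followed by the elementary inequality $\bigl((a+b)/2\bigr)^2\leq (a^2+b^2)/2$, sums, and concludes by minimality that both inequalities saturate. You simply spell out more explicitly what the paper leaves implicit, namely that saturation of the squaring step forces $\Theta(d(x_i,x_j))=\Theta(d(y_i,y_j))$ and then (by strict monotonicity of $\Theta$) the three distances in the definition of $||$ all coincide.
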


\begin{proof}
For all $i$, $m_i:=m_{1/2}[x_i,y_i]$ is in $X^{\alpha(M_i)}$. Then, by $(\Theta$-$\mathrm{NPC}1)$, 
\begin{align*}
\mathbb{E}^{\Theta}(m_1,\ldots ,m_l)&\leq \frac{1}{4}\sum_{1\leq i<j\leq n}\left\{\Theta(d(x_i,x_j))+\Theta(d(y_i,y_j))\right\}^2 \\
&\leq \frac{1}{2}\sum_{1\leq i<j\leq n}\left(\Theta(d(x_i,x_j))^2+\Theta(d(y_i,y_j))^2\right) =D.
\end{align*}
By minimality of $D$, all of the inequalities above, in fact,  must be equalities. 
\end{proof}

\begin{proof}[Proof of Theorem~\ref{mtheorem=main2}]
The proof is done by a simple modification of the proofs of Theorems~\ref{mtheorem=main1} and \ref{mtheorem=main1b}. More precisely, we focus on \textit{realizers of $\Theta$-energy} of fixed point subsets, and replace Lemma~\ref{lemma=parallel1} with Lemma~\ref{lemma=key2}. Details are left to the reader.
\end{proof}

\subsection{Generalization to supereflexive Banach spaces}\label{subsection=sr}

\begin{mtheorem}[Superintrinsic synthesis with respect to superreflexive Banach spaces]\label{mtheorem=sr}
Let $\mathcal{E}$ be a class of superreflexive Banach spaces. Assume that it satisfies $(\mathrm{U})$.
\begin{enumerate}[$(1)$]
  \item $($Two-subgroup case$)$ Let $M,L\leqslant G$ and $\Pi\leqslant \mathrm{Aut }(G)$. Assume hypotheses $(i)$, $(ii)$, and $(\mathrm{GAME}^+)$ as in Theorem~\ref{mtheorem=main1b}. Then, relative properties $(\mathrm{F}_{\mathcal{E}})$ for $G\geqslant M$ and $G\geqslant M$ imply property $(\mathrm{F}_{\mathcal{E}})$ for $G$.
  \item $($General case$)$ Let $l\in \mathbb{N}_{\geq 3}$. Let $M_i\leqslant G$ for $1\leq i \leq l$ and $\Pi\leqslant \mathrm{Aut }(G)$. Assume hypotheses $(i_l)$, $(ii)$, and $(\mathrm{GAME}_l^+)$ as in Theorem~\ref{mtheorem=main2}. Then, relative properties $(\mathrm{F}_{\mathcal{E}})$ for $G\geqslant M_i$ for all $1\leq i\leq l$ imply property $(\mathrm{F}_{\mathcal{E}})$ for $G$.
\end{enumerate}
\end{mtheorem}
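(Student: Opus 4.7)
The plan is to adapt the proofs of Theorems~\ref{mtheorem=main1b} and \ref{mtheorem=main2} by extending Proposition~\ref{proposition=key}$(3)$ from uniformly convex to superreflexive Banach spaces. The overall skeleton is unchanged: $(\mathrm{S}_{+})$ and $(\mathrm{U})$ produce a $1$-uniform action with a realizer of $D$ (or of the $\Theta$-energy $\mathbb{E}^{\Theta}$ in the general-$l$ case), and a winning strategy for $(\mathrm{Game}^{+})$ or $(\mathrm{Game}_l^{+})$ is combined with self-improvement moves of type $(\mathrm{I}^{+})$ and $(\mathrm{I\hspace{-.1em}I}_{\bullet})$; only the Banach-space-geometric input needs to be strengthened.

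Two of the three ingredients of Proposition~\ref{proposition=key}$(3)$ survive the weakening verbatim. First, the BFGM decomposition $E = E^{\rho(G)} \oplus E'_{\rho(G)}$ only needs $E$ reflexive and $\rho$ uniformly bounded, both of which hold in the superreflexive regime. Second, the finiteness of $G^{\mathrm{abel}}$, which forces the cocycle to vanish in the $E^{\rho(G)}$-component, is derived in the proof of Theorem~\ref{mtheorem=main1b} purely from finite generation of $G$ together with relative property $(\mathrm{F}_{\mathcal{E}})$ for $G \geqslant M$ and $G \geqslant L$ via $\mathrm{ab}_G(G) = \mathrm{ab}_G(M) + \mathrm{ab}_G(L)$. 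The only missing piece is Pseudo-Uniqueness (the analogue of Lemma~\ref{lemma=pseudounique}), which in the uniformly convex case came directly from strict convexity.

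The heart of the plan is recovering Pseudo-Uniqueness in the superreflexive setting. By the Enflo--Pisier renorming theorem, every $E \in \mathcal{E}$ admits an equivalent uniformly convex norm $|\cdot|$. Here the $(\mathrm{U})$-closure of $\mathcal{E}$ is crucial: the metric ultrapower $E^{\mathcal{U}}$ taken in the original norm lies in $\mathcal{E}$, is again superreflexive, and coincides as a Banach space with $(E,|\cdot|)^{\mathcal{U}}$, which is uniformly convex with a uniform modulus. I would argue that the set of realizer-differences $\mathcal{D} := \{v-u : (u,v)\ \mathrm{realizes}\ D\} \subseteq E^{\mathcal{U}}$ is closed, bounded, convex, $\rho(P \cup P^{-1})$-invariant, and lies on the $\|\cdot\|^{\mathcal{U}}$-sphere of radius $D$. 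Using uniform convexity of $|\cdot|^{\mathcal{U}}$, an averaging/extremal-point argument combined with the BFGM splitting should force the image of $\mathcal{D}$ in the complementary summand $E'_{\rho(G)}$ to be a single point, equivalently $\xi-\xi' = \eta-\eta' \in E^{\rho(G)}$ for any two realizers $(\xi,\eta)$ and $(\xi',\eta')$. For part~$(2)$, the same modification is applied to the $\Theta$-energy formulation of Lemma~\ref{lemma=key2}, with $\Theta$ coming from the power-type modulus of the equivalent uniformly convex renorming.

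Once Pseudo-Uniqueness is in place, the self-improvement argument runs verbatim as in the proofs of Theorems~\ref{mtheorem=main1} and \ref{mtheorem=main2}: a $\Pi$-invariant finite generating set $S$ keeps $\mathcal{C}_{\mathcal{E}}^{(S,1)\textrm{-}\mathrm{uniform}}$ invariant under the $\mathrm{Inn}(G)\cdot\Pi$-twisting, so type $(\mathrm{I\hspace{-.1em}I}_{\bullet})$ moves propagate fixed points between $\xi$ and $\eta$, while the extended Proposition~\ref{proposition=key}$(3)$ handles type $(\mathrm{I}^{+})$ moves; the winning strategy eventually forces some $H_i = G$, contradicting $D > 0$. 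The principal obstacle is precisely the Pseudo-Uniqueness step above: bridging the isometric original norm, in which $D$ is realized, and the equivalent uniformly convex norm, which supplies the rigidity. The naive comparison fails because a realizer of $\|\cdot\|^{\mathcal{U}}$-distance need not realize $|\cdot|^{\mathcal{U}}$-distance; the right remedy should involve an iterated-ultrapower construction with carefully controlled Banach--Mazur distance together with an extremal-point argument inside the weakly compact convex set $\mathcal{D}$.
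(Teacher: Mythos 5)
There is a genuine gap, and you have in fact put your finger on it yourself in your closing sentences: an $\|\cdot\|$-realizer of $D$ has no reason to realize the $|\cdot|$-distance, and without that, the strict convexity of the equivalent renorm gives you no rigidity on the original minimization problem. The set $\mathcal{D}$ of realizer-differences is indeed a closed convex subset of the $\|\cdot\|$-sphere of radius $D$, but with $\|\cdot\|$ merely superreflexive this set can be an entire face of the ball, and the ``averaging/extremal-point'' step you gesture at does not force $\xi-\eta=\xi'-\eta'$; it is precisely this identity that Lemma~\ref{lemma=pseudounique} extracts from strict convexity, and without it you cannot conclude $\xi-\xi'\in E^{\rho(M)}\cap E^{\rho(L)}$. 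Moreover, a static Enflo--Pisier renorming of $E$ is not $\rho$-invariant, so $\alpha$ is no longer isometric in $|\cdot|$, and the entire realizer/parallelism machinery (Propositions~\ref{proposition=gs}, \ref{proposition=shalom}, \ref{proposition=key}) assumes isometric actions.

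The missing idea is to renorm \emph{equivariantly} rather than statically, which is exactly what \cite[Proposition~2.3]{BFGM} provides: given a (necessarily affine, uniformly bounded) action $(\alpha,E)$ on a superreflexive space, there is an \emph{isometric} action $(\beta,F)$ on a uniformly convex space $F$ together with an intertwiner $T\colon E\to F$ with $\|T\|_{\mathrm{op}}\le2$ and $\|T^{-1}\|_{\mathrm{op}}\le1$. The paper does not try to extend Proposition~\ref{proposition=key}$(3)$ to superreflexive targets; instead it fixes one $(\alpha,E)\in\mathcal{C}_{\mathcal{E}}^{(S,1)\textrm{-}\mathrm{uniform}}$ (supplied by Proposition~\ref{proposition=gs}), transports it to $(\beta,F)$, and replaces $\mathcal{C}_{\mathcal{E}}^{(S,1)\textrm{-}\mathrm{uniform}}$ by the class $\overline{\mathcal{C}}$ of all $1$-uniform isometric actions $(\sigma',K')$ with $\delta_{K'}\ge\delta_{F}$ that admit such a controlled intertwiner to some $(\sigma,K)\in\mathcal{C}_{\mathcal{E}}$. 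The uniform lower bound $\delta_{F}$ on moduli makes $\overline{\mathcal{C}}$ stable under the pointed metric ultraproducts used in Propositions~\ref{proposition=gs} and \ref{proposition=shalom}, the controlled intertwiner lets relative property $(\mathrm{F}_{\mathcal{E}})$ produce fixed points for $M$, $L$ inside $\overline{\mathcal{C}}$, and the realizer found in $\overline{\mathcal{C}}$ lives in a uniformly convex space where Pseudo-Uniqueness and the self-improvement moves apply verbatim. Your route cannot be salvaged without this equivariant renorming step; the iterated-ultrapower idea addresses neither the loss of isometry nor the misalignment of the two minimization problems.
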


\begin{proof}
Basic idea is to employ the following result \cite[Proposition~2.3]{BFGM}: for an (isometric) $G$-action $(\alpha, E)$ where $E$ is superreflexive, there exists an \textit{isometric} $G$-action $(\beta, F)$ and an isomorphism $T\colon E\stackrel{\simeq}{\to} F$ such that $F$ is uniformly convex and that $T$ intertwines $(\alpha,E)$ and $(\beta,F)$. Moreover, $T$ can be chosen to satisfy $\|T\|_{\mathrm{op}}\leq 2$ and $\|T^{-1}\|_{\mathrm{op}}\leq 1$. 

Suppose, on the contrary, that $G$ fails to have property $(\mathrm{F}_{\mathcal{E}})$. First, we take the same Step~0 as in the proof of Theorem~\ref{mtheorem=main1}, and have $S$. In Step~1, we consider a different class than $\mathcal{C}_{\mathcal{E}}^{(S,1)\textrm{-}\mathrm{uniform}}$ in the following way. By Proposition~\ref{proposition=gs}, there exists $(\alpha,E)\in \mathcal{C}_{\mathcal{E}}^{(S,1)\textrm{-}\mathrm{uniform}}$. Fix such $(\alpha,E)$. Then, apply the aforementioned result, and obtain $(\beta,F)$ and $T\colon E\stackrel{\simeq}{\to}F$ for this $(\alpha, E)$. Set $\overline{\mathcal{C}}$ be the class of all (isometric) actions $(\sigma',K')$ with the following properties:
\begin{itemize}
  \item the action $(\sigma',K')$ is $1$-uniform with respect to $S$;
  \item the Banach space $K'$ is uniformly convex with $\delta_{K'}(r)\geq \delta_{F}(r)$ for all $r\in (0,2]$ (recall the definition of $\delta$ from Definition~\ref{definition=superexpanders}); and
  \item there exist an (isometric) $G$-action $(\sigma,K) \in \mathcal{C}_{\mathcal{E}}$  and an isomorphism $V\colon K\stackrel{\simeq}{\to}K'$ such that $\|V\|_{\mathrm{op}} \leq 2$, $\|V^{-1}\|_{\mathrm{op}} \leq 1$, and $V$ intertwines $(\sigma,K)$ and $(\sigma',K')$.
\end{itemize}
Note that $\overline{\mathcal{C}}\ne \emptyset$ because $(\beta,F)\in \overline{\mathcal{C}}$. We replace $\mathcal{C}_{\mathcal{E}}^{(S,1)\textrm{-}\mathrm{uniform}}$ with $\overline{\mathcal{C}}$, and argue in similar ways to the proofs of Theorems~\ref{mtheorem=main1}, \ref{mtheorem=main1b} and \ref{mtheorem=main2}. 
\end{proof}

\section{Applications and Remarks}\label{section=application}
Here we prove Corollaries~\ref{corollary=st}, \ref{corollary=expanders}, and \ref{corollary=superexpanders}.

\begin{proof}[Proof of $(a)$ of Corollary~$\ref{corollary=st}$]
Combine Theorems~\ref{mtheorem=main1}, \ref{mtheorem=main1b}, \ref{mtheorem=sr}, and Example~\ref{example=st}. 
\end{proof}

What remains is the ``Part Step'' (relative property $(\mathrm{F}_{\mathcal{X}})$). The key is \textit{relative property $(\mathrm{T}_{\mathcal{X}})$}; and \textit{rank-raising argument}, observed in \cite{mimura2011} and \cite{LMS}. 

\subsection{Relative property $(\mathrm{T}_{\mathcal{E}})$, and rank-raising argument}\label{subsection=relT}
\begin{definition}[Relative property $(\mathrm{T}_{\mathcal{E}})$, \cite{BFGM} with modification by de la Salle]\label{definition=relbanach}
Let $\mathcal{E}$ be a class of Banach spaces. Let $\Lambda$ be a countable group and $M\trianglelefteq \Lambda$. We say that $\Lambda\trianglerighteq M$ has \textit{relative property $(\mathrm{T}_{\mathcal{E}})$} if the following holds true: for every $E\in \mathcal{E}$ and for every linear isometric representation $\rho \colon \Lambda \to O(E)$, there exists $\epsilon >0$ and a \textit{finite} subset $K\subseteq \Lambda$ such that for all $\zeta \in E$, $ \max_{s\in K} \|\rho(s)\zeta-\zeta \|   \geq  \epsilon  \|\overline{\zeta}\|_{\overline{E}}$. 
Here $\overline{E}=E/E^{\rho(M)}$, and $E \twoheadrightarrow \overline{E} \colon \zeta \mapsto \overline{\zeta}$ is the the quotient map.

For a finite subset $K\subseteq \Lambda$, we say that $\Lambda\trianglerighteq  M$ has \textit{relative property $(\mathrm{T}_{\mathcal{E}})$ with respect to $K$} if the condition above is satisfied for this specified $K$ $($for all $\rho$$)$.
\end{definition}
If $\Lambda$ is finitely generated, then relative property $(\mathrm{T}_{\mathcal{E}})$ is equivalent to that with respect to $S$, where $S$ is (some, equivalently, any) finite \textit{generating} set of $\Lambda$. Hence, \textit{if $\Lambda$ is finitely generated, then we usually do not specify $K$}. The open mapping theorem (for Fr\'{e}chet spaces) \cite[3.a]{BFGM} verifies that \textit{relative property $(\mathrm{F}_{\mathcal{E}})$  implies relative property $(\mathrm{T}_{\mathcal{E}})$}, for a pair $\Lambda\trianglerighteq M$. In a private communication, de la Salle pointed out that, to make the proof work, we may need to modify the original definition \cite[Definition~1.1]{BFGM} to Definition~\ref{definition=relbanach}. If $\mathcal{E}\subseteq \mathcal{B}_{\mathrm{uc}}$, then these two formulations are equivalent.

It is known that the converse implication to above is \textit{false} in general (for instance, set $\mathcal{E}=\mathcal{B}_{L_q}$ for sufficiently large $q$). Nevertheless, if we consider elementary/Steinberg groups, then \textit{by raising rank by $1$}, we obtain the following ``converse''.

\begin{definition}\label{definition=standard}
Let $R$ be of the form $\mathbb{F}_p\langle t_1,\ldots ,t_k\rangle$ or $\mathbb{Z}\langle t_1,\ldots ,t_k\rangle$, where $p$ prime and $k\in \mathbb{N}$. Let $m\geq 2$. Then, the \textit{standard finite subset of $\mathrm{E}(m,R)\ltimes R^m$} is defined as the the union of $\{e_{i,j}(\pm t_a):i\ne j \in [m], 0\leq a\leq k\}(\subseteq \mathrm{E}(m,R))$ and $\{(0,\ldots,0,\pm t_a ,0,\ldots ,0): 0\leq a\leq k\}(\subseteq R^{m})$. Here, $t_0$ means $1$, and for the latter set, the $\pm t_a$-term runs from the first to the $m$-th entries.

The \textit{standard finite subset of $\mathrm{St}(m,R)\ltimes R^m$} is defined by replacement of  $e_{i,j}(\pm t_a)$ with $E_{i,j}(\pm t_a)$ in the definition above.
\end{definition}
By $(\ast)$, the standard finite set is a \textit{generating} set, provided that $m\geq 3$.

\begin{proposition}[\textit{Rank-raising argument}, Theorem~1.3 in \cite{mimura2011} and Proposition~5.2 in \cite{LMS}]\label{proposition=rankraising}
Let $\mathcal{E}$ be a class of Banach spaces with respect to which fixed point property is equivalent to bounded orbit property. Let $R$ be of the form $\mathbb{F}_p\langle t_1,\ldots ,t_k\rangle$ or $\mathbb{Z}\langle t_1,\ldots ,t_k\rangle$, where $p$ prime and $k\in \mathbb{N}$. Let $m\geq 2$. Then, relative property $(\mathrm{T}_{\mathcal{E}})$  for $\mathrm{E}({m},R)\ltimes R^{m}\trianglerighteq R^{m}$, with respect to  the standard finite subset for $\mathrm{E}({m},R)\ltimes R^{m}$, implies relative property $(\mathrm{F}_{\mathcal{E}})$ for $\mathrm{E}({m+1},R)\ltimes R^{m+1}\geqslant R^{m+1}$. 

The corresponding assertion holds true if we replace $\mathrm{E}(m,R)$ and $\mathrm{E}(m+1,R)$, respectively, with $\mathrm{St}(m,R)$ and $\mathrm{St}(m+1,R)$. \end{proposition}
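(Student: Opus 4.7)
The plan is to show that, for any affine isometric action $\alpha$ of $G := \mathrm{E}(m+1,R) \ltimes R^{m+1}$ on $E \in \mathcal{E}$, the normal abelian subgroup $V := R^{m+1}$ admits a fixed point. Since $\mathcal{E}$ satisfies the equivalence of the fixed point and bounded orbit properties, it suffices to find a single point whose $V$-orbit is bounded. I would begin by embedding $m+1$ copies of $\mathrm{E}(m,R) \ltimes R^m$ into $G$: for each $i \in [m+1]$, set $V_i := \{v \in V : v_i = 0\} \cong R^m$ and let $K_i \leqslant \mathrm{E}(m+1,R)$ be the subgroup generated by $\{e_{j,k}(r) : j,k \in [m+1] \setminus \{i\},\ r \in R\}$; then $K_i \ltimes V_i \cong \mathrm{E}(m,R) \ltimes R^m$. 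Since permutation matrices in $\mathrm{E}(m+1,R)$ are products of elementary matrices once $m+1 \geq 3$, the images of the standard finite subset of $\mathrm{E}(m,R) \ltimes R^m$ under these $m+1$ embeddings all lie inside one fixed finite subset of $G$, so the hypothesis simultaneously supplies relative $(\mathrm{T}_{\mathcal{E}})$ for each pair $K_i \ltimes V_i \trianglerighteq V_i$.

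The crucial step is to promote relative $(\mathrm{T}_{\mathcal{E}})$ for $K_i \ltimes V_i \trianglerighteq V_i$ into a fixed point of $V_i$ in $E$. Decomposing $\alpha|_{K_i \ltimes V_i} = \rho + b$ into its linear part $\rho$ and its cocycle $b$, the hypothesis yields a spectral-gap estimate of the form $\max_s \|\rho(s)\zeta - \zeta\| \geq \epsilon \|\overline{\zeta}\|_{\overline{E}}$ on $\overline{E} = E / E^{\rho(V_i)}$, with $s$ ranging over a standard finite subset. Combining this gap estimate with the cocycle identity on the abelian $V_i$ and with the $K_i$-equivariance of $b|_{V_i}$ (where $K_i$ acts on $V_i$ by conjugation as the defining representation of $\mathrm{E}(m,R)$ on $R^m$, which admits no non-zero fixed vector once $m \geq 2$), one deduces that $b|_{V_i}$ is bounded. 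Hence $V_i$ has a bounded orbit in $E$, and by the fixed = bounded orbit hypothesis $V_i$ admits a fixed point $y_i \in E$. In particular, for every $x \in E$ and every $i$, the $V_i$-orbit of $x$ is bounded by $2\|x - y_i\|$ via the isometry of $\alpha$.

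Finally, $V$ decomposes as $V_1 + V_2$, since any $v = (v_1, \ldots, v_{m+1}) \in V$ equals $(0, v_2, \ldots, v_{m+1}) + (v_1, 0, \ldots, 0) \in V_1 + V_2$, and $V_1, V_2$ commute inside $V$. Writing $v = v_1 + v_2$ and applying the triangle inequality,
$\|\alpha(v) x - x\| \leq \|\alpha(v_1)(\alpha(v_2) x) - \alpha(v_2) x\| + \|\alpha(v_2) x - x\|$; the second summand is bounded in $v_2 \in V_2$ by the preceding step, while the first equals $\|\alpha(v_1) y - y\|$ at $y := \alpha(v_2) x$, which is bounded uniformly in $v_1 \in V_1$ and in $y$ ranging over the bounded set $\alpha(V_2) x$ via the identity $\alpha(v_1) y - y = (\rho(v_1) - I)(y - x) + (\alpha(v_1) x - x)$ and the isometry of $\rho(v_1)$. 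Thus $V$ has bounded orbit at $x$, and the fixed = bounded orbit hypothesis provides a $V$-fixed point, establishing relative $(\mathrm{F}_{\mathcal{E}})$ for $G \geqslant V$. The Steinberg-group version is proved by the identical argument using the embeddings of $\mathrm{St}(m,R) \ltimes R^m$ into $\mathrm{St}(m+1,R) \ltimes R^{m+1}$.

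The main obstacle is the second step above. In the Hilbert setting it is immediate from the orthogonal decomposition $E = E^{\rho(V_i)} \oplus (E^{\rho(V_i)})^{\perp}$ together with the spectral gap; in the uniformly convex Banach setting the BFGM splitting lemma (Proposition~2.6 in \cite{BFGM}) provides the analogue. For the broader classes of $\mathcal{E}$ considered here (including $L$-embedded spaces such as non-commutative $L_1$), one must work more carefully, combining the fixed = bounded orbit equivalence with the observation that the defining representation of $\mathrm{E}(m,R)$ on $R^m$ has no non-zero invariant vector once $m \geq 2$, so as to rule out a nontrivial $K_i$-equivariant homomorphism $V_i \to E^{\rho(V_i)}$ that would otherwise obstruct boundedness of $b|_{V_i}$.
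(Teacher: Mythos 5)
Your proposal captures a correct outer skeleton (decompose $V = R^{m+1}$ as $V_1 + V_2$ with $V_i \cong R^m$, bound $b|_{V_i}$ on each factor, then invoke fixed = bounded orbit), but the crucial step where you pass from relative property $(\mathrm{T}_{\mathcal{E}})$ for $K_i \ltimes V_i \trianglerighteq V_i$ to a $V_i$-fixed point is exactly the implication this proposition is designed to \emph{avoid}, and as you have written it, it fails. Your Step~2 uses only the restricted action $\alpha|_{K_i \ltimes V_i}$ and the relative $(\mathrm{T}_{\mathcal{E}})$ hypothesis for the rank-$m$ pair $\mathrm{E}(m,R)\ltimes R^m \trianglerighteq R^m$; if this reasoning were valid, it would prove that relative $(\mathrm{T}_{\mathcal{E}})$ for that pair directly implies relative $(\mathrm{F}_{\mathcal{E}})$ for the same pair. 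But the text explicitly notes (just before Definition~\ref{definition=standard}) that this converse implication is \emph{false} in general --- for instance for $\mathcal{E}=\mathcal{B}_{L_q}$ with $q$ large --- and that the raising of the rank by one is precisely what recovers a usable converse. A correct proof must therefore use the ambient $\mathrm{E}(m+1,R)\ltimes R^{m+1}$ in an essential way, which your argument does not do.

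Looking at your Step~2 more closely, two concrete things break. First, the spectral-gap estimate from relative $(\mathrm{T}_{\mathcal{E}})$ tests against the \emph{whole} standard finite set $S_m$, which contains generators of $K_i$ as well as of $V_i$; the unboundedness-of-cocycle argument (taking $v_n \in V_i$ with $\|b(v_n)\|\to\infty$ and $\zeta_n = b(v_n)/\|b(v_n)\|$, then using the commutativity $[V_i, v_n]=1$) only yields $\|(\rho(s)-1)\zeta_n\|\to 0$ for $s \in V_i$. You have no control of $\|(\rho(s)-1)\zeta_n\|$ for $s$ in the $K_i$-part of $S_m$, so the spectral gap cannot be applied. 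Second, isolating a ``$K_i$-equivariant homomorphism $V_i \to E^{\rho(V_i)}$'' presupposes a $K_i\ltimes V_i$-invariant complement of $E^{\rho(V_i)}$ in $E$, which [BFGM, Proposition~2.6] provides only when $E$ is uniformly convex (or at least reflexive); the class $\mathcal{E}$ here is allowed to include non-reflexive $L$-embedded spaces such as non-commutative $L_1$, where no such decomposition is available. (A side remark: your stated reason for the vanishing --- that $R^m$ has no nonzero $\mathrm{E}(m,R)$-invariant vector --- is not the right one; what actually works is the boundedness of the orbit $\{f(re_1)\}_{r\in R}$ combined with either characteristic $0$ or torsion of $R$, using $e_{1,2}(r)\cdot e_2 = re_1+e_2$. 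But this is a local slip and not the main issue.) Finally, the paper indicates that the intended proof follows [LMS, Proposition~5.2] modified by a Cartan-type involution $g\mapsto {}^t(g^{-1})$ together with a multiplication-reversed ring isomorphism for non-commutative $R$; that mechanism --- in particular the involution swapping the role of two conjugate copies of $\mathrm{E}(m,R)\ltimes R^m$ inside the rank-$(m+1)$ group --- is entirely absent from your proposal.
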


Note that if $m\geq 3$, then we do not need to specify such a finite subset. Recall from the paragraph below Theorem~\ref{theorem=shalom} that $\mathcal{E}$ fulfills the condition above if $\mathcal{E}\subseteq \{\mathrm{reflexive}\ \mathrm{Banach}\ \mathrm{space}\}\cup \mathcal{B}_{\mathrm{NC}L_1}$. The proof of Proposition~\ref{proposition=rankraising} goes along almost the same line as that of \cite[Proposition~5.2]{LMS}. One modification is to take ``Cartan-type involution'' ($  g\mapsto {}^t(g^{-1}) )$, we, besides, need  to apply a \textit{multiplication-reversed} ring isomorphism on $R$  to obtain a group isomorphism. By assumption on our $R$, the standard multiplication-reversed ring isomorphism (set $t_a \mapsto t_a$, and extend it as a multiplication-reversed ring homomorphism) does the job.

The following results are known on relative property $(\mathrm{T}_{\mathcal{E}})$.

\begin{theorem}\label{theorem=relTbanach}
Let $R$ be a ring as in Proposition~$\ref{proposition=rankraising}$. Let $m\geq 2$.
\begin{enumerate}[$(1)$]
  \item $($Kassabov, Genuine form of Theorem~$\ref{theorem=relT}$$)$ The pair $\mathrm{E}({m},R)\ltimes R^{m}\trianglerighteq R^{m}$ has relative property $(\mathrm{T}_{\mathcal{E}})$ with respect to the standard finite subset. Here $\mathcal{E}=\mathrm{H}\mathrm{ilbert}$.
  \item $($Olivier, \cite[Theorem 1.3]{olivier}; \cite[Theorem~A]{BFGM}; and \cite[Theorem 1.4]{mimura2011}$)$ The assertion above holds true also for $\mathcal{E}=\mathcal{B}_{\mathrm{NC}L_q}$ for all $1\leq q<\infty$; $\mathcal{E}=\mathcal{B}_{\mathrm{Q}L_q}$ for all $q\in (1,\infty)\setminus \{\frac{2j}{2j-1}:j\in \mathbb{Z}\}$; and for $\mathcal{E}=[\mathcal{H}\mathrm{ilbert}]$.
\end{enumerate}
Furthermore, all of these statements remain true if we replace $\mathrm{E}(m,R)$ with $\mathrm{St}(m,R)$.
\end{theorem}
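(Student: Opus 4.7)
The plan is to split the statement into its elementary and Steinberg halves. For parts $(1)$ and $(2)$ concerning $\mathrm{E}(m,R)$, no new work is required: one directly invokes the cited results --- Kassabov's theorem for $\mathcal{E}=\mathcal{H}\mathrm{ilbert}$, Olivier's theorem \cite[Theorem~1.3]{olivier} together with \cite[Theorem~A]{BFGM} for $\mathcal{E}=\mathcal{B}_{\mathrm{NC}L_q}$ (and, via the Mazur-map/quotient technique, for $\mathcal{B}_{\mathrm{Q}L_q}$ at the allowed exponents), and \cite[Theorem~1.4]{mimura2011} for $[\mathcal{H}\mathrm{ilbert}]$. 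Each of these supplies a quantitative Kazhdan bound for $\mathrm{E}(m,R)\ltimes R^m\trianglerighteq R^m$ with respect to the standard finite subset of Definition~\ref{definition=standard}, which is exactly what is claimed.

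For the ``furthermore'' clause, my proposal is to re-run each of the cited proofs verbatim after replacing every occurrence of $e_{i,j}(r)$ by $E_{i,j}(r)$. The key observation is that those proofs invoke only the following structural ingredients: the abelian relation $E_{i,j}(r_1)E_{i,j}(r_2)=E_{i,j}(r_1+r_2)$ within each root subgroup; the commuting relation $[E_{i,j}(r_1),E_{k,l}(r_2)]=1$ when the indices are disjoint in the appropriate sense; the commutator identity $(\ast)$, i.e.\ $[E_{i,j}(r_1),E_{j,k}(r_2)]=E_{i,k}(r_1r_2)$ for $i\ne j\ne k\ne i$; and the explicit translation action $E_{i,n}(r)\cdot v=v+r e_i$ of the top-row generators on $R^m$. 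All of these are part of the defining presentation of $\mathrm{St}(m,R)\ltimes R^m$, and the embedded normal subgroup $R^m$ --- together with the quotient $\bar{E}=E/E^{\tilde\rho(R^m)}$ featuring in Definition~\ref{definition=relbanach} --- is unaffected by the projection $\mathrm{St}(m,R)\twoheadrightarrow \mathrm{E}(m,R)$. Consequently, the Kazhdan constants produced for $(\mathrm{St}(m,R)\ltimes R^m,R^m)$ agree with those obtained for $(\mathrm{E}(m,R)\ltimes R^m,R^m)$.

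The main point I would be careful about --- and what I view as the one genuine obstacle --- is that one must \emph{not} try to reduce the Steinberg case to the elementary one by simply pulling back representations. The kernel $K_2(m,R)$ of $\mathrm{St}(m,R)\twoheadrightarrow \mathrm{E}(m,R)$ is central but generally non-trivial, so a generic isometric representation $\tilde\rho\colon \mathrm{St}(m,R)\ltimes R^m\to O(E)$ need not factor through $\mathrm{E}(m,R)\ltimes R^m$. Naively passing to the subspace $E^{\tilde\rho(K_2(m,R))}$ would also lose control of $\|\bar\zeta\|_{\bar E}$, since $K_2(m,R)$-invariants and $R^m$-invariants need not coincide. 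The resolution is precisely to avoid any such reduction and to execute the original arguments directly in terms of Steinberg generators and defining relations, where, as sketched above, all required identities remain present. A consistency check is provided by Proposition~\ref{proposition=rankraising}, whose Steinberg version is already stated and used in the proof of Corollary~\ref{corollary=st}.$(b)$.
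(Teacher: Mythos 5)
The paper provides no explicit proof of this citation-collection statement, and your proposal supplies the correct implicit reasoning: the elementary claims are exactly the cited results, while the Steinberg versions follow because those proofs use only the Steinberg defining relations on $\mathrm{St}(m,R)$ and the conjugation action of the generators on the abelian normal subgroup $R^m$ (and its dual), both of which factor through $\mathrm{E}(m,R)$. Your caution that one cannot reduce the Steinberg case to the elementary one by pre-composing with $\mathrm{St}(m,R)\ltimes R^m\twoheadrightarrow\mathrm{E}(m,R)\ltimes R^m$ (since $K_2(m,R)$ may be nontrivial and a generic isometric representation need not factor through the quotient) is precisely why the transfer has to be carried out at the level of the argument, exactly as you propose.
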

\subsection{Closing of the proofs of Corollaries~\ref{corollary=st}, \ref{corollary=expanders}, and \ref{corollary=superexpanders}}\label{subsection=closing}

Since fixed point properties pass to group quotients, we may assume that $R$ is as in Proposition~$\ref{proposition=rankraising}$. 

\begin{corollary}[Our ``Part Step'']\label{corollary=relFbanach}
Let $\tilde{M},\tilde{L}\leqslant \mathrm{St}(n,R)=:\tilde{G}$ as in Corollary~$\ref{corollary=st}$. 
\begin{enumerate}[$(1)$]
  \item  For every $n\geq 3$, $\tilde{G}\geqslant \tilde{M}$ and $\tilde{G}\geqslant \tilde{L}$ have relative property $(\mathrm{F}_{\mathcal{H}\mathrm{ilbert}})$.
  \item For every $n\geq 4$, $\tilde{G}\geqslant \tilde{M}$ and $\tilde{G}\geqslant \tilde{L}$ has relative property $(\mathrm{F}_{\mathcal{E}})$, where $\mathcal{E}=\mathcal{B}_{\mathrm{NC}L_q}$ for all $1\leq q<\infty$; $\mathcal{E}=\mathcal{B}_{\mathrm{Q}L_q}$ for all $q\in (1,\infty)\setminus \{\frac{2j}{2j-1}:j\in \mathbb{Z}\}$; and $\mathcal{E}=[\mathcal{H}\mathrm{ilbert}]$.
\end{enumerate}
\end{corollary}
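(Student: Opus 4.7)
The plan is to combine the rank-raising machinery of Proposition~\ref{proposition=rankraising} with the relative $(\mathrm{T}_{\mathcal{E}})$ inputs of Theorem~\ref{theorem=relTbanach}, and then to transport the resulting relative $(\mathrm{F}_{\mathcal{E}})$ from a semidirect-product pair up to $\tilde{G}\geqslant \tilde{M}$ inside $\mathrm{St}(n,R)$. Concretely, for part~$(2)$ (where $n\geq 4$), I would set $m:=n-2\geq 2$ and apply Theorem~\ref{theorem=relTbanach}.$(2)$ in its $\mathrm{St}$-version to $\mathrm{St}(m,R)\ltimes R^m\trianglerighteq R^m$ with respect to the standard finite subset; invoking then Proposition~\ref{proposition=rankraising} yields relative $(\mathrm{F}_{\mathcal{E}})$ for $\mathrm{St}(n-1,R)\ltimes R^{n-1}\geqslant R^{n-1}$. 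Each class $\mathcal{E}$ appearing in part~$(2)$ consists of reflexive Banach spaces except for $\mathcal{B}_{\mathrm{NC}L_1}$ at $q=1$, which is precisely the exception allowed in Proposition~\ref{proposition=rankraising}, so the hypothesis is satisfied throughout. For part~$(1)$, which must also cover $n=3$, I would bypass rank-raising: Theorem~\ref{theorem=relTbanach}.$(1)$ with $m:=n-1\geq 2$ yields relative $(\mathrm{T}_{\mathcal{H}\mathrm{ilbert}})$ for $\mathrm{St}(n-1,R)\ltimes R^{n-1}\trianglerighteq R^{n-1}$, which by Delorme--Guichardet equals relative $(\mathrm{F}_{\mathcal{H}\mathrm{ilbert}})$ for the same pair.

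Next I would construct the natural group homomorphism
\[
\phi\colon \mathrm{St}(n-1,R)\ltimes R^{n-1}\longrightarrow \mathrm{St}(n,R)=\tilde{G},
\]
defined on generators by $E_{i,j}(r)\mapsto E_{i,j}(r)$ for $i\neq j\in [n-1]$ and by sending the $i$-th coordinate of $R^{n-1}$ to $E_{i,n}(\cdot )$. Well-definedness reduces to checking the Steinberg relations in the image; the crucial identity is $[E_{i,j}(r_1),E_{j,n}(r_2)]=E_{i,n}(r_1r_2)$ for $i\neq j\in [n-1]$, which encodes exactly the left matrix action of $\mathrm{St}(n-1,R)$ on $R^{n-1}$ in the semidirect product. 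By construction $\phi(R^{n-1})=\tilde{M}$. Since relative $(\mathrm{F}_{\mathcal{E}})$ is preserved by surjections of the ambient group (an action on $\phi(\mathrm{St}(n-1,R)\ltimes R^{n-1})$ pulls back to one on $\mathrm{St}(n-1,R)\ltimes R^{n-1}$ with the same fixed-point set under the identification $R^{n-1}\leftrightarrow \tilde{M}$) and is trivially monotone in the outer group (any $\tilde{G}$-action restricts to $\phi(\mathrm{St}(n-1,R)\ltimes R^{n-1})$), this yields relative $(\mathrm{F}_{\mathcal{E}})$ for $\tilde{G}\geqslant \tilde{M}$ in every case of the corollary.

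The result for $\tilde{L}$ I would then deduce from the one for $\tilde{M}$ via a Cartan-type automorphism $\Phi \in \mathrm{Aut}(\mathrm{St}(n,R))$ that exchanges $\tilde{M}$ and $\tilde{L}$. Explicitly, combine the involution $A\mapsto {}^t(A^{-1})$ with a multiplication-reversing ring anti-involution $\tau_R$ of $R$; for the rings $R$ of Proposition~\ref{proposition=rankraising} one may take $\tau_R$ fixing each free generator $t_a$ and extending antimultiplicatively on the free algebra. On generators the resulting $\Phi$ reads $E_{i,j}(r)\mapsto E_{j,i}(-\tau_R(r))$, and a direct check of the Steinberg relations (this is the very automorphism already invoked in the proof of Proposition~\ref{proposition=rankraising}) shows that it is a well-defined homomorphism sending $\tilde{M}$ onto $\tilde{L}$. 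Twisting any $\tilde{G}$-action by $\Phi$ converts fixed points of $\tilde{M}$ into fixed points of $\tilde{L}$, which finishes the proof. The only non-mechanical ingredient is the existence of $\tau_R$, which explains the restriction to free/polynomial rings; the remaining bookkeeping---well-definedness of $\phi$ and $\Phi$, together with the quotient and outer-group principles for relative $(\mathrm{F}_{\mathcal{E}})$---is routine Steinberg-group calculus, and I do not anticipate a serious obstacle.
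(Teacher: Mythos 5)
Your proposal is correct and follows essentially the same route as the paper's proof: rank-raising (Proposition~\ref{proposition=rankraising}) applied to the relative $(\mathrm{T}_{\mathcal{E}})$ inputs of Theorem~\ref{theorem=relTbanach}, pushed through the natural homomorphism $\mathrm{St}(n-1,R)\ltimes R^{n-1}\to\mathrm{St}(n,R)$ onto $\tilde M$, and then transported to $\tilde L$ by the Cartan-type anti-automorphism built from $A\mapsto{}^{t}(A^{-1})$ together with the multiplication-reversing ring endomorphism fixing the free generators. The only cosmetic difference is that you invoke the relative Delorme--Guichardet equivalence for \emph{all} $n\geq 3$ in part~$(1)$ rather than only for $n=3$, which is harmless.
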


\begin{proof}
Combine Theorem~\ref{theorem=relTbanach} and Proposition~\ref{proposition=rankraising}. Then, we have the conclusions through a homomorphism $\mathrm{St}({n-1},R)\ltimes R^{n-1} \to \mathrm{St}(n,R)$ and a ``Cartan-type'' involution, as argued above. Here, $(1)$ follows for $n=3$ because for $\mathcal{E}=\mathcal{H}\mathrm{ilbert}$, relative property $(\mathrm{T}_{\mathcal{E}})$ is equivalent to relative property $(\mathrm{F}_{\mathcal{E}})$ (see Introduction).
\end{proof}

\begin{proof}[Proof of $(b)$ of Corollary~$\ref{corollary=st}$]
For the cases of $\mathcal{E}=\mathcal{B}_{\mathrm{NC}L_q}$ and $\mathcal{E}=\mathcal{B}_{\mathrm{Q}L_q}$, by $(1)$ of Example~\ref{example=npc}, $(a)$ of Corollary~\ref{corollary=st} ``Synthesizes'' Corollary~\ref{corollary=relFbanach}. For $[\mathcal{H}\mathrm{ilbert}]$, instead of it, consider $[\mathcal{H}\mathrm{ilbert}]_{C}$ for each $C\geq 1$. Recall that this satisfies $(\mathrm{U})$.
\end{proof}

\begin{proof}[Proof of Corollary~$\ref{corollary=expanders}$]
For $(1)$, observe that $\Lambda$ inherits property ``$(\mathrm{F}_{\mathcal{B}_{L_q}})$ for all $q\in (1,\infty)$'' from $\tilde{G}$ (by $\ell_q$-induction of isometric actions). Then, Theorem~\ref{theorem=relhyp} ends the proof. For $(2)$, apply $(2)$ and $(3)$ of Theorem~\ref{theorem=robust} (set $q=q$ for $\mathcal{B}_{\mathrm{NC}L_q}$ and $\mathcal{B}_{\mathrm{Q}L_q}$; and $q=2$ for $[\mathcal{H}\mathrm{ilbert}]_C$).
\end{proof}

\begin{proof}[Proof of Corollary~$\ref{corollary=superexpanders}$]
For each $E\in \mathcal{B}_{\mathrm{uc}}$, consider the class $\mathcal{T}_{\ell_2(\mathbb{N},E)}$ defined in Example~\ref{example=npc}.$(3)$. Combine Proposition~\ref{proposition=rankraising}, Corollary~\ref{corollary=st}.$(a)$,  and Lemma~\ref{lemma=superexpanders}.
\end{proof}

\subsection{Concluding remarks}\label{subsection=remarks}
\begin{remark}\label{remark=ngeq4}
We impose the assumption  ``rank$\geq 3$'' on Conjecture~\ref{conjecture=superexpanders} because of the rank-raising argument. At present, we do not know whether it is essential.
\end{remark}

\begin{remark}\label{remark=root}
Our synthesis may work for Steinberg groups $\mathrm{E}(\Phi, A)$ associated with a root system $\Phi$ over a unital, finitely generated and commutative ring $A$, if $\Phi$ is classical, reduced, irreducible, and of \textit{rank at least $3$} (see \cite{EJK} for definitions).  For instance, if $\Phi=C_n$ with $n\geq 3$, then the corresponding rank-raising argument was studied in the Ph.D. thesis \cite[Theorem 9.4.2]{mimuraphD} of the author.
\end{remark}

\begin{remark}\label{remark=noncommutativel1}
As we saw in the proof of Corollary~\ref{corollary=st}.$(b)$.$(2)$ for $q\in (1,\infty)$, ``Part Step'' remains true even for $q=1$. Nevertheless, in ``Synthesis Step'', we need to exclude the case of $q=1$ because non-commutative $L_1$-spaces are not superreflexive. We note that, on the other hand, Theorem~\ref{theorem=shalom} works even for $q=1$ due to \cite{BGM}. By Theorem~\ref{theorem=CK}, this provides the following result for $\mathrm{SL}(n,\mathbb{Z})$:

\begin{proposition}\label{proposiiton=slnz}
Let $n\geq 4$. Then, $\mathrm{SL}(n,\mathbb{Z})$ has property $(\mathrm{F}_{\mathcal{B}_{\mathrm{NC}L_1}})$.
\end{proposition}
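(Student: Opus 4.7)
The plan is to combine three ingredients already assembled in the paper: our ``Part Step'' for Steinberg groups with $q=1$, the descent to $\mathrm{SL}(n,\mathbb{Z})$, and Shalom's original intrinsic synthesis (Theorem~\ref{theorem=shalom}) made available here by Carter--Keller Bounded Generation (Theorem~\ref{theorem=CK}) together with the $L$-embeddedness of non-commutative $L_1$. The obstruction that forced us to exclude $q=1$ in Corollary~\ref{corollary=st}.$(b)$.$(2)$ was superreflexivity; but we recover the $q=1$ case for $R=\mathbb{Z}$ (and only this ring among our applications) because Bounded Generation is available, so we can switch from our superintrinsic synthesis to Shalom's.

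First, I would set $G=\mathrm{SL}(n,\mathbb{Z})=\mathrm{E}(n,\mathbb{Z})$ and take $M,L\leqslant G$ to be the upper- and lower-unipotent subgroups as in Theorem~\ref{theorem=relT}. Apply Corollary~\ref{corollary=relFbanach}.$(2)$ with $\mathcal{E}=\mathcal{B}_{\mathrm{NC}L_1}$ and $R=\mathbb{Z}$ (which is of the form allowed in Proposition~\ref{proposition=rankraising}) to conclude that $\mathrm{St}(n,\mathbb{Z})\geqslant \tilde{M}$ and $\mathrm{St}(n,\mathbb{Z})\geqslant \tilde{L}$ have relative property $(\mathrm{F}_{\mathcal{B}_{\mathrm{NC}L_1}})$. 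Since relative fixed point properties pass through the canonical quotient $\mathrm{St}(n,\mathbb{Z})\twoheadrightarrow \mathrm{E}(n,\mathbb{Z})$ which sends $\tilde{M}$ onto $M$ and $\tilde{L}$ onto $L$, the pairs $G\geqslant M$ and $G\geqslant L$ themselves have relative property $(\mathrm{F}_{\mathcal{B}_{\mathrm{NC}L_1}})$.

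Next, I would invoke Theorem~\ref{theorem=shalom}. Its hypothesis is that fixed point property is equivalent to boundedness of orbits on every $X\in \mathcal{X}$; by the remark immediately after the statement of Theorem~\ref{theorem=shalom}, this holds for $\mathcal{X}=\mathcal{B}_{\mathrm{NC}L_1}$ thanks to the $L$-embeddedness of non-commutative $L_1$-spaces (Bader--Gelander--Monod \cite{BGM}). The remaining hypothesis of Theorem~\ref{theorem=shalom} is Bounded Generation of $G$ by $M\cup L$, and this is exactly Carter--Keller (Theorem~\ref{theorem=CK}), which requires $n\geq 3$ and $R=\mathbb{Z}$; both hold here.

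Assembling: Theorem~\ref{theorem=shalom} applied to $\mathcal{X}=\mathcal{B}_{\mathrm{NC}L_1}$ and to the subgroups $M,L\leqslant G$ synthesizes the two relative fixed point properties into property $(\mathrm{F}_{\mathcal{B}_{\mathrm{NC}L_1}})$ for $G=\mathrm{SL}(n,\mathbb{Z})$, completing the proof. There is no genuine obstacle in this argument; the only conceptual point to keep in mind is why the $q=1$ case does \emph{not} fall out of our main Theorem~\ref{mtheorem=main1b} / Theorem~\ref{mtheorem=sr} (namely, non-commutative $L_1$-spaces are not superreflexive, so $(\mathrm{NPC}1)$ under $(\mathrm{U})$ fails to force uniform convexity), and consequently why for this single case one must fall back on Bounded Generation. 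That restricts the result to $R=\mathbb{Z}$ and excludes the other rings treated in Corollary~\ref{corollary=st}.$(b)$.$(2)$.
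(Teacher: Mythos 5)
Your proposal is correct and follows the same route as the paper's own argument (sketched in the remark preceding the proposition): obtain the Part Step at $q=1$ from Corollary~\ref{corollary=relFbanach}.$(2)$, then fall back on Shalom's Bounded Generation synthesis (Theorem~\ref{theorem=shalom}), applicable to $\mathcal{B}_{\mathrm{NC}L_1}$ via $L$-embeddedness (\cite{BGM}), with Carter--Keller (Theorem~\ref{theorem=CK}) supplying the Bounded Generation of $\mathrm{SL}(n,\mathbb{Z})$ by $M\cup L$. You also correctly identify the reason for the detour: non-commutative $L_1$-spaces fail superreflexivity, so the superintrinsic synthesis of Theorems~\ref{mtheorem=main1b}/\ref{mtheorem=sr} is unavailable, which is precisely what restricts this case to $R=\mathbb{Z}$.
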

In fact, it has property $(\mathrm{FF}_{\mathcal{B}_{\mathrm{NC}L_1}})$ in the sense of  \cite{mimura2011}. Indeed, Proposition~\ref{proposition=rankraising} is generalized to relative property $(\mathrm{FF}_{\mathcal{E}})$, see \cite[Proposition~5.2]{LMS}. Moreover, by the result of Nica \cite{nica}, the same holds for $\mathrm{SL}(n\geq 4,\mathbb{F}_p[t])$ for prime power $p$.

Study on property $(\mathrm{F}_{\mathcal{B}_{\mathrm{NC}L_1}})$ is of its own interest. This is because neither of approaches of Lafforgue \cite{lafforgue} and Oppenheim \cite{oppenheim} is applicable.
\end{remark}

Karlsson addressed the following to the author. He observed that $U\subseteq G$ Boundedly Generates $G$ if and only if the following holds; ``\textit{for every isometric action $G$ on a metric space, if $U$-orbits are bounded, then so are $G$-orbits}'' (to show ``if'' direction, consider $G\curvearrowright \mathrm{Cay}(G,U)$). From this viewpoint, (certain cases of) our superintrinsic synthesis may be regarded as follows: under certain superintrinsic hypotheses on $M_i$'s and $G$, bounded orbit property for the whole $G$ may be implied by the relative versions for $G\geqslant M_i$, \textit{for restricted classes of metric spaces}.

\begin{problem}[Karlsson]\label{problem=karlsson}
Establish  superintrinsic synthesis in $\mathrm{boundedness}$ $\mathrm{orbit}$ $\mathrm{properties}$ for classes of metric spaces, such as $($certain$)$ Gromov-hyperbolic spaces. 
\end{problem}

\section*{acknowledgments}
The author acknowledges Narutaka Ozawa for his comments and enduring encouragement. This paper is dedicated with admiration to him. The author is indebted to Masahiko Kanai for the terminology ``intrinsic''; Takayuki Okuda for discussions on the current formulation of moves in $(\mathrm{Game})$; and Yoshimichi Ueda for the suggestion of stating our criterion in terms of enlargement process (we use the terminology $(\mathrm{Game})$) during the author's visit at Kyushu University. He thanks \'{E}tienne Blanchard, Yves de Cornulier, Yuhei Suzuki, and Takumi Yokota for the careful reading of previous versions of the present manuscript and for various comments. The author  is grateful to Pierre de la Harpe for drawing the author's attention to a similarity to Mautner phenomena; Anders Karlsson for stimulating discussions; Takefumi Kondo, Hiroyasu Izeki, Shin Nayatani, and Tetsu Toyoda for discussions on parallelism in BNPC spaces; Bogdan Nica for his work \cite{nica}; Shin-ichi Oguni for providing references \cite{gerasimov} and \cite{puls}; and Mikael de la Salle for informing the author of work in progress. The author acknowledges Nicolas Monod and his secretary Marcia Gouffon for their hospitality on the author's current long-term stay in EPF Lausanne, supported by Postdoctoral Fellowship for Research Abroad no.~30-291 by the JSPS. The author is supported in part by  JSPS KAKENHI Grant Number JP25800033.

\bibliographystyle{amsalpha}
\bibliography{p.bib}

\providecommand{\bysame}{\leavevmode\hbox to3em{\hrulefill}\thinspace}
\providecommand{\MR}{\relax\ifhmode\unskip\space\fi MR }
% \MRhref is called by the amsart/book/proc definition of \MR.
\providecommand{\MRhref}[2]{%
  \href{http://www.ams.org/mathscinet-getitem?mr=#1}{#2}
}
\providecommand{\href}[2]{#2}
\begin{thebibliography}{GALdlS16}

\bibitem[AL16]{AL}
Aur\'{e}lien Alvarez and Vincent Lafforgue, \emph{Actions affines
  isom\'{e}triques propres des groupes hyperboliques sur des quotients
  d'espaces $\ell_p$}, preprint, arXiv:1609.09797 (2016).

\bibitem[BdlHV08]{BHV}
Bachir Bekka, Pierre de~la Harpe, and Alain Valette, \emph{Kazhdan's property
  ({T})}, New Mathematical Monographs, vol.~11, Cambridge University Press,
  Cambridge, 2008. \MR{2415834}

\bibitem[BFGM07]{BFGM}
Uri Bader, Alex Furman, Tsachik Gelander, and Nicolas Monod, \emph{Property
  ({T}) and rigidity for actions on {B}anach spaces}, Acta Math. \textbf{198}
  (2007), no.~1, 57--105. \MR{2316269}

\bibitem[BGM12]{BGM}
Uri Bader, Tsachik Gelander, and Nicolas Monod, \emph{A fixed point theorem for
  {$L^1$} spaces}, Invent. Math. \textbf{189} (2012), no.~1, 143--148.
  \MR{2929085}

\bibitem[BH99]{BH}
Martin~R. Bridson and Andr{\'e} Haefliger, \emph{Metric spaces of non-positive
  curvature}, Grundlehren der Mathematischen Wissenschaften [Fundamental
  Principles of Mathematical Sciences], vol. 319, Springer-Verlag, Berlin,
  1999. \MR{1744486}

\bibitem[BL00]{BL}
Yoav Benyamini and Joram Lindenstrauss, \emph{Geometric nonlinear functional
  analysis. {V}ol. 1}, American Mathematical Society Colloquium Publications,
  vol.~48, American Mathematical Society, Providence, RI, 2000. \MR{1727673}

\bibitem[BP03]{BP}
Marc Bourdon and Herv{\'e} Pajot, \emph{Cohomologie {$l_p$} et espaces de
  {B}esov}, J. Reine Angew. Math. \textbf{558} (2003), 85--108. \MR{1979183}

\bibitem[Bus48]{busemann}
Herbert Busemann, \emph{Spaces with non-positive curvature}, Acta Math.
  \textbf{80} (1948), 259--310. \MR{0029531}

\bibitem[Che16]{cheng}
Qingjin Cheng, \emph{Sphere equivalence, property {H}, and {B}anach expanders},
  Studia Math. \textbf{233} (2016), no.~1, 67--83. \MR{3503071}

\bibitem[CK83]{CK}
David Carter and Gordon Keller, \emph{Bounded elementary generation of {${\rm
  SL}_{n}({\mathcal {O}})$}}, Amer. J. Math. \textbf{105} (1983), no.~3,
  673--687. \MR{704220}

\bibitem[DJ02]{DJ}
Jan Dymara and Tadeusz Januszkiewicz, \emph{Cohomology of buildings and their
  automorphism groups}, Invent. Math. \textbf{150} (2002), no.~3, 579--627.
  \MR{1946553}

\bibitem[dLMdlS15]{LMS}
Tim de~Laat, Masato Mimura, and Mikael de~la Salle, \emph{On strong property
  ({T}) and fixed point properties for {L}ie groups.}, preprint. To appear in
  Annales de l'Institut Fourier, arXiv:1508.05860 (2015).

\bibitem[dlS16]{delasalle}
Mikael de~la Salle, \emph{A local characterization of {K}azhdan projections and
  applications}, preprint, arXiv:1604.01616v1 (2016).

\bibitem[DS08]{DS}
Cornelia Dru{\c{t}}u and Mark~V. Sapir, \emph{Groups acting on tree-graded
  spaces and splittings of relatively hyperbolic groups}, Adv. Math.
  \textbf{217} (2008), no.~3, 1313--1367. \MR{2383901}

\bibitem[EJZ10]{EJ}
Mikhail Ershov and Andrei Jaikin-Zapirain, \emph{Property ({T}) for
  noncommutative universal lattices}, Invent. Math. \textbf{179} (2010), no.~2,
  303--347. \MR{2570119}

\bibitem[EJZK11]{EJK}
Mikhail Ershov, Andrei Jaikin-Zapirain, and Martin Kassabov, \emph{Property
  $({T})$ for groups graded by root systems.}, preprint, to appear in Memoirs
  of the American Mathematical Society, arXiv:1102.0031v2 (2011).

\bibitem[Fur11]{furman}
Alex Furman, \emph{A survey of measured group theory}, Geometry, rigidity, and
  group actions, Chicago Lectures in Math., Univ. Chicago Press, Chicago, IL,
  2011, pp.~296--374. \MR{2807836}

\bibitem[GALdlS16]{GLS}
Maria-Paula Gomez-Aparicio, Benben Liao, and Mikael de~la Salle, forthcoming
  work (2016).

\bibitem[Ger12]{gerasimov}
Victor Gerasimov, \emph{Floyd maps for relatively hyperbolic groups}, Geom.
  Funct. Anal. \textbf{22} (2012), no.~5, 1361--1399. \MR{2989436}

\bibitem[GKM08]{GKM}
Tsachik Gelander, Anders Karlsson, and Gregory~A. Margulis,
  \emph{Superrigidity, generalized harmonic maps and uniformly convex spaces},
  Geom. Funct. Anal. \textbf{17} (2008), no.~5, 1524--1550. \MR{2377496}

\bibitem[Kas07]{kassabov}
Martin Kassabov, \emph{Universal lattices and unbounded rank expanders},
  Invent. Math. \textbf{170} (2007), no.~2, 297--326. \MR{2342638}

\bibitem[Kas11]{kassabovex}
\bysame, \emph{Subspace arrangements and property {T}}, Groups Geom. Dyn.
  \textbf{5} (2011), no.~2, 445--477. \MR{2782180}

\bibitem[KM99]{KM}
Sava Krsti{\'c} and James McCool, \emph{Presenting {${\rm GL}_n(k\langle
  T\rangle)$}}, J. Pure Appl. Algebra \textbf{141} (1999), no.~2, 175--183.
  \MR{1706364}

\bibitem[Laf08]{lafforgue}
Vincent Lafforgue, \emph{Un renforcement de la propri\'et\'e ({T})}, Duke Math.
  J. \textbf{143} (2008), no.~3, 559--602. \MR{2423763}

\bibitem[Lav15]{lavy}
Omer Lavy, \emph{Fixed point theorems for groups acting on non-positively
  curved manifolds}, preprint, arXiv: 1512.07745v2 (2015).

\bibitem[Mim11a]{mimura2011}
Masato Mimura, \emph{Fixed point properties and second bounded cohomology of
  universal lattices on {B}anach spaces}, J. Reine Angew. Math. \textbf{653}
  (2011), 115--134. \MR{2794627}

\bibitem[Mim11b]{mimuraphD}
\bysame, \emph{Rigidity theorems for universal and symplectic universal
  lattices}, Ph.D. Thesis, available at the personal webpage. (2011).

\bibitem[Mim16]{mimuraT}
\bysame, \emph{An alternative proof of {K}azhdan property for elementary
  groups}, expository article, arXiv:1611.00337 (2016).

\bibitem[MN14]{MN}
Manor Mendel and Assaf Naor, \emph{Nonlinear spectral calculus and
  super-expanders}, Publ. Math. Inst. Hautes \'Etudes Sci. \textbf{119} (2014),
  1--95. \MR{3210176}

\bibitem[Nic16]{nica}
Bogdan Nica, \emph{On bounded elementary generation for $\mathrm{SL}_n$ over
  polynomial rings}, preprint (2016).

\bibitem[Now15]{nowak}
Piotr~W. Nowak, \emph{Group actions on {B}anach spaces}, Handbook of group
  actions. {V}ol. {II}, Adv. Lect. Math. (ALM), vol.~32, Int. Press,
  Somerville, MA, 2015, pp.~121--149. \MR{3382026}

\bibitem[Oli12]{olivier}
Baptiste Olivier, \emph{Kazhdan's property {$(T)$} with respect to
  non-commutative {$L_p$}-spaces}, Proc. Amer. Math. Soc. \textbf{140} (2012),
  no.~12, 4259--4269. \MR{2957217}

\bibitem[Opp15]{oppenheim}
Izhar Oppenheim, \emph{Averaged projections, angles between groups and
  strengthening of property ({T})}, preprint, arXiv: 1507.08695v3 (2015).

\bibitem[Ost13]{ostrovskii}
Mikhail~I. Ostrovskii, \emph{Metric embeddings}, De Gruyter Studies in
  Mathematics, vol.~49, De Gruyter, Berlin, 2013, Bilipschitz and coarse
  embeddings into Banach spaces. \MR{3114782}

\bibitem[Pul07]{puls}
Michael~J. Puls, \emph{The first {$L^p$}-cohomology of some groups with one
  end}, Arch. Math. (Basel) \textbf{88} (2007), no.~6, 500--506. \MR{2325882}

\bibitem[PX03]{PX}
Gilles Pisier and Quanhua Xu, \emph{Non-commutative {$L^p$}-spaces}, Handbook
  of the geometry of {B}anach spaces, {V}ol.\ 2, North-Holland, Amsterdam,
  2003, pp.~1459--1517. \MR{1999201}

\bibitem[Sha99]{shalom1999}
Yehuda Shalom, \emph{Bounded generation and {K}azhdan's property ({T})}, Inst.
  Hautes \'Etudes Sci. Publ. Math. (1999), no.~90, 145--168 (2001).
  \MR{1813225}

\bibitem[Sha06]{shalom2006}
\bysame, \emph{The algebraization of {K}azhdan's property ({T})}, International
  {C}ongress of {M}athematicians. {V}ol. {II}, Eur. Math. Soc., Z\"urich, 2006,
  pp.~1283--1310. \MR{2275645}

\bibitem[Sta09]{stalder}
Yves Stalder, \emph{Fixed point properties in the space of marked groups},
  Limits of graphs in group theory and computer science, EPFL Press, Lausanne,
  2009, pp.~171--182. \MR{2562144}

\bibitem[Vas07]{vaserstein}
Leonid Vaserstein, \emph{Bounded generation of invertible matrices over
  polynomial ring by addition operators.}, preprint (2007).

\bibitem[WY14]{WY}
Rufus Willett and Guoliang Yu, \emph{Geometric property ({T})}, Chin. Ann.
  Math. Ser. B \textbf{35} (2014), no.~5, 761--800. \MR{3246936}

\end{thebibliography}

\end{document}